\numberwithin{equation}{section}
\newtheorem{thm}{Theorem}[section]
\newtheorem{lem}[thm]{Lemma}
\newtheorem{prop}[thm]{Proposition}
\theoremstyle{definition}
\newtheorem{rem}[thm]{Remark}
\newtheorem{opb}[thm]{Open problem}
\newcommand\R{{\mathbb R}}
\newcommand\C{{\mathbb C}}
\newcommand\Tma{T_{\mathrm{max}}}
\newcommand\Sma{S_{\mathrm{max}}}
\newcommand\Supp{{\mathrm{supp}}\, }
\newcommand\Cz{{C_0(\R^N )}}
\newcommand\Comp{{\mathrm{c}}}
\newcommand\DI{u_0 }
\newcommand\DIb{v_0 }
\newcommand\GVar {{\Psi }}
\newcommand\Tsem{{\boldsymbol{\mathcal T}}}
\newcommand\goto{\mathop{\longrightarrow}}
\newcommand\Eqdef{\stackrel{\text{\tiny def}}{=}}
\newcommand\Granda{A}
\newcommand\Const{\chi }
\newcommand\Avoir{\mu }
\newcommand\Neh{I}
\newcommand\Ene{E}
\newcommand\Var{V}
\newcommand\Mom{J}
\newcommand\Gst{Q}
\newcommand\Bca{W}
\newcommand\Hcal{{\mathcal H}}
\newcommand\Kcal{{\mathcal K}}
\newcommand\MScN[1]{\href{http://www.ams.org/mathscinet-getitem?mr=#1}{\nolinkurl{(#1)}}}
\newcommand\DOI[1]{\href{http://dx.doi.org/#1}{(doi: \nolinkurl{#1})}}
\newcommand\LINK[1]{\href{#1}{(link: \nolinkurl{#1})}}
\newenvironment{notation}{\medskip \noindent{\bf Notation. }}{}
\begin{document}

\title{Finite-time blowup for some nonlinear complex Ginzburg-Landau equations}

\def\shorttitle{complex Ginzburg-Landau equations}

\author[T. Cazenave]{Thierry Cazenave$^1$}
\address{$^1$Universit\'e Pierre et Marie Curie \& CNRS, Laboratoire Jacques-Louis Lions,
B.C. 187, 4 place Jussieu, 75252 Paris Cedex 05, France}
\email{\href{mailto:thierry.cazenave@upmc.fr}{thierry.cazenave@upmc.fr}}

\author[S. Snoussi]{Seifeddine Snoussi$^2$}
\address{$^2$Institut Pr\'eparatoire aux Etudes Scientifiques et Techniques, Universit\'e de Car\-tha\-ge, B.P. 51, 2070 La Marsa, Tunisia}
\email{\href{mailto:seifsnoussi@yahoo.fr}{seifsnoussi@yahoo.fr}}

\subjclass[2010] {Primary 35Q56; secondary 35B44, 35K91, 35Q55}

\keywords{Complex Ginzburg-Landau equation, finite-time blowup, energy, variance}

\begin{abstract}

In this article, we review  finite-time blowup criteria for the family of complex Ginzburg-Landau equations
$u_t =  e^{ i\theta } [\Delta u +  |u|^\alpha u] + \gamma u$ on ${\mathbb R}^N $, where $0 \le  \theta \le \frac {\pi } {2}$, $\alpha >0$ and $\gamma  \in {\mathbb R} $. 
We study in particular the effect of the parameters $\theta $ and $\gamma $, and the dependence of the blowup time on these parameters.

\end{abstract}

\maketitle

\setcounter{tocdepth}{3}

\tableofcontents

\section{Introduction}

In this paper, we review certain known results, and present some new ones, on the problem of finite-time blowup for the family of complex Ginzburg-Landau equations
on $\R^N $
\begin{equation} \label{GL}
\begin{cases}
 u_t =  e^{ i\theta } [\Delta u +  |u|^\alpha u] + \gamma u \\
u(0)= \DI
\end{cases}
\end{equation}
where  $ 0\le \theta \le \frac {\pi } {2}$\footnote{One might consider $-\frac {\pi } {2} \le \theta \le 0$,  which is equivalent, by changing $u$ to $ \overline{u} $}, $\alpha >0$ and $\gamma  \in \R $\footnote{For a general $\gamma \in \C$, the imaginary part is eliminated by changing $u(t,x)$ to $ e^{ - i t \Im \gamma } u(t,x)$}.
The case $\theta =0$ of equation~\eqref{GL} is the well known nonlinear heat equation
\begin{equation} \label{NLH}
\begin{cases}
 u_t =   \Delta u +  |u|^\alpha u + \gamma u \\
u(0)= \DI
\end{cases}
\end{equation}
which arises in particular in chemistry and biology. See e.g.~\cite{Fife}. 
The case $\theta =  \frac {\pi } {2}$ of~\eqref{GL} is the equally well known nonlinear Schr\"o\-din\-ger equation
\begin{equation} \label{NLS}
\begin{cases}
 u_t =   i [\Delta u +  |u|^\alpha u] + \gamma u \\
u(0)= \DI
\end{cases}
\end{equation}
which is an ubiquitous model for weakly nonlinear dispersive waves and nonlinear optics. See e.g.~\cite{SulemS}. 
Therefore, equation~\eqref{GL} can be considered as ``intermediate" between the nonlinear heat and Schr\"o\-din\-ger equations. 
Equation~\eqref{GL}  is itself a particular case of the more general complex Ginzburg-Landau equations
on $\R^N $
\begin{equation} \label{GLG}
\begin{cases}
 u_t =  e^{ i\theta } \Delta u + e^{ i\phi } |u|^\alpha u + \gamma u \\
u(0)= \DI
\end{cases}
\end{equation}
where $ 0\le \theta \le \frac {\pi } {2}$, $\phi \in \R$, $\alpha >0$ and $\gamma \in \R$, 
which is a generic modulation equation describing the nonlinear evolution of patterns at near-critical conditions. See e.g.~\cite{StewartsonS, CrossH, Mielke}.

Two  strategies have been developed for studying finite-time blowup.
The first one consists in deriving conditions on the initial value, as general as possible, which ensure that the corresponding solution of~\eqref{GL} blows up in finite time. 
The proofs  often use a differential inequality which is satisfied by some quantity related to the solution, and one shows that this differential inequality can only hold on a finite-time interval. 
The major difficulty is to guess the appropriate quantity to calculate. However, when such a method can be applied, it usually provides a simple proof of blowup, under explicit conditions on the initial value. 
On the other hand, this strategy does not give any information on how the blowup occurs, nor on the mechanism that leads to blowup.
Concerning the family~\eqref{GL}, this is the type of approach used in~\cite{Kaplan, Levine, Ball} for the nonlinear heat equation; in~\cite{Zakharov, Glassey, Kavian, Tsutsumi, OgawaTu, OgawaTd} for the nonlinear Schr\"o\-din\-ger equation; in~\cite{SnoussiT, CazenaveDW, CazenaveDF} for the intermediate case of~\eqref{GL}.

Another strategy consists in looking for an ansatz of an approximate blowing-up solution, and then showing that the remainder remains bounded, or becomes small with respect to the approximate solution, as time tends to the blow-up time of the approximate solution. The first difficulty is to find the appropriate ansatz. Then, proving the boundedness of the remainder is often quite involved technically.
When this method is successful, it provides a precise description of how the corresponding solutions blow up. It may also explain the mechanism that makes these solution blow up. For the family~\eqref{GL}, this is the strategy employed in particular in~\cite{MerleZ} for the nonlinear heat equation; in~\cite{MerleR2, MerleR3, MerleR4, Raphael, RaphaelZ, MerleRZ} for the nonlinear Schr\"o\-din\-ger equation; in~\cite{Zaag, MasmoudiZ} for the intermediate case of~\eqref{GL} (and even~\eqref{GLG}). 

Equation~\eqref{GL} enjoys certain properties which the general equation~\eqref{GLG} does not. in particular its solutions satisfy certain energy identities (see Section~\ref{sCauchy}), which make it possible to study blowup by the first approach described above. We review sufficient conditions for finite-time blowup
(obtained using this approach),  and we study the influence of the  parameters $\theta $ and $\gamma $. 
In Section~\ref{sHeat} and~\ref{sNLS}, we recall the standard results for the heat equation~\eqref{NLH}  and the Schr\"o\-din\-ger equation~\eqref{NLS}, respectively. 
We are not aware of any previous reference for Theorem~\ref{eNLH4}, nor for the case $\gamma >0$ of Theorem~\ref{eNLS2}, although the proofs use standard arguments. 
The case $\gamma >0$ of Theorem~\ref{eNLS3} seems to be new.
Section~\ref{sGL} is devoted to the complex Ginzburg-Landau equation~\eqref{GL}. 
In Section~\ref{sBlowup}, we review sufficient conditions for blowup, and the case $\gamma <0$ of Theorem~\ref{eBUg1} is partially new.
Finally, we study in Section~\ref{sBUT} the behavior of the blowup time as the parameter $\theta $ approaches $ \frac {\pi } {2}$, i.e. as the equation gets close to the nonlinear Schr\"o\-din\-ger equation~\eqref{NLS}. We consider separately the cases $\alpha <\frac {4} {N}$ (subsection~\ref{sGP}) and $\alpha \ge \frac {4} {N}$ (subsection~\ref{sGN}). The case $\gamma >0$ of Theorem~\ref{eGGL1bu}, and Theorem~\ref{eGGLhbu}, are new. A few open question are collected in Section~\ref{OPPB}.

\begin{notation} 
We denote by $L^p (\R^N ) $, for $1\le p\le \infty $, the usual (complex valued) Lebesgue spaces. $H^1 (\R^N  ) $ and $H^{-1} (\R^N ) $ are the usual (complex valued) Sobolev spaces. 
(See e.g.~\cite{AdamsF} for the definitions and properties of these spaces.)
We denote by $C^\infty _\Comp (\R^N )$ the set of (complex valued) functions that have compact support and are of class $C^\infty $. We denote by $\Cz$ the closure of $C^\infty _\Comp (\R^N  )$ in $L^\infty  (\R^N  ) $. 
In particular, $\Cz$ is the space of functions $u$ that are continuous $\R^N  \to \C$ and such that  $u(x)\to 0$ as  $ |x|\to \infty $.  $\Cz$ is endowed with the sup norm.
\end{notation}

\section{The Cauchy problem and energy identities} \label{sCauchy} 
For the study of the local well posedness of~\eqref{GL}, it is convenient to consider the equivalent integral formulation, given by Duhamel's formula,
\begin{equation} \label{GLI} 
u (t) = \Tsem_\theta (t) \DI +  \int _0^t \Tsem_\theta  (t-s) [e^{i\theta } |u (s)|^\alpha u(s) + \gamma u(s)] \, ds
\end{equation} 
where $(\Tsem _\theta (t)) _{ t\ge 0 }$ is the semigroup of contractions on $L^2 (\R^N ) $ generated by the operator $e^{i\theta } \Delta $ with domain $H^2 (\R^N ) $. Moreover, 
$\Tsem_\theta (t)\psi  = G_\theta (t) \star \psi $, where the kernel $G_\theta (t)$ is defined by
\begin{equation*} 
G_\theta (t)  (x)\equiv  (4\pi t e^{i\theta })^{-\frac {N} {2}} e^{- \frac { |x|^2} {4t e^{i\theta }}}.
\end{equation*} 
If $ 0\le \theta < \frac {\pi } {2}$, it is not difficult to show that $(\Tsem _\theta (t)) _{ t\ge 0 }$ is an analytic semigroup on $L^2 (\R^N ) $, and a bounded $C_0$ semigroup on $L^p (\R^N ) $ for $1\le p<\infty $, and on $\Cz$. 
In particular (see~\cite[formula~(2.2)]{CazenaveDW})
\begin{equation} \label{fUB1} 
 \| \Tsem _\theta (t) \DI \| _{ L^\infty  } \le (\cos \theta )^{-\frac {N} {2}}  \| \DI \| _{ L^\infty  }.
\end{equation} 
It is immediate by a contraction mapping argument (see~\cite[Theorem~1]{Segal}) that the Cauchy problem \eqref{GL} is locally well  posed in $\Cz$. Moreover, it is easy to see using the analyticity of the semigroup that $\Cz \cap H^1 (\R^N ) $ is preserved under the action of~\eqref{GL}. The following result is established in~\cite[Proposition~2.1 and Remark~2.2]{CazenaveDW} in the case $\gamma =0$, and the argument equally applies when $\gamma \not = 0$.

\begin{prop} \label{eLocGL1} 
Suppose $ 0\le \theta < \frac {\pi } {2}$. 
Given any $\DI \in \Cz $, there exist $T>0$ and a unique solution $u \in  C([0,T], \Cz )$ of~\eqref{GLI} on $ (0,T)$. 
Moreover, $u$ can be extended to a maximal interval $[0, \Tma)$, and if $\Tma <\infty $, then $  \|u(t)\| _{ L^\infty  }\to \infty $ as $t \uparrow \Tma$. If, in addition,  $\DI \in H^1 (\R^N ) $, then $u \in  C([0,T],  H^1 (\R^N ) ) 
\cap C((0,T), H^2 (\R^N ) )\cap C^1((0, T), L^2 (\R^N ) ) $ and  $u$ satisfies~\eqref{GL} in $L^2 (\R^N ) $ for all $t\in (0,T)$. Furthermore, if $\alpha < \frac {4} {N}$ and $\Tma <\infty $, then  $  \|u(t)\| _{ L^2  }\to \infty $ as $t \uparrow \Tma$.
\end{prop}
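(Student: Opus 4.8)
The plan is to obtain the $\Cz$ theory from a contraction-mapping argument on~\eqref{GLI}, upgrade regularity via the smoothing of the analytic semigroup, and then strengthen the blowup alternative to $L^2$ in the subcritical range by rerunning the fixed point at the $L^2$ level. For the local theory in $\Cz$, I would work, for $M,T>0$, in the complete metric space $\{u\in C([0,T],\Cz):\sup_{[0,T]}\|u(t)\|_{L^\infty}\le M\}$ and consider the map $\GVar(u)(t)=\Tsem_\theta(t)\DI+\int_0^t\Tsem_\theta(t-s)[e^{i\theta}|u(s)|^\alpha u(s)+\gamma u(s)]\,ds$. Since $z\mapsto e^{i\theta}|z|^\alpha z+\gamma z$ induces a locally Lipschitz map $\Cz\to\Cz$, and since~\eqref{fUB1} shows $\Tsem_\theta(t)$ maps $\Cz$ into itself with norm $\le(\cos\theta)^{-N/2}$, one checks that $\GVar$ leaves this space invariant and is a contraction once $T$ is small depending only on $M$, hence on $\|\DI\|_{L^\infty}$. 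Banach's fixed point theorem then yields the unique local solution; because the existence time depends only on $\|\DI\|_{L^\infty}$, the solution extends to a maximal $[0,\Tma)$ on which $\|u(t)\|_{L^\infty}\to\infty$ as $t\uparrow\Tma$ whenever $\Tma<\infty$. The term $\gamma u$ is an $L^\infty$-Lipschitz perturbation and does not affect the argument, so this part is essentially that of~\cite{CazenaveDW}.

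For the regularity when $\DI\in H^1$, running the same fixed point in $\Cz\cap H^1$ gives $u\in C([0,T],H^1)$; here one uses that $u\mapsto|u|^\alpha u$ is locally Lipschitz $\Cz\cap H^1\to H^1$, which follows from $\nabla(|u|^\alpha u)=|u|^\alpha\nabla u+\tfrac\alpha2|u|^{\alpha-2}u(\bar u\nabla u+u\nabla\bar u)$ together with the $L^\infty$ bound on $u$. The higher regularity $u\in C((0,T),H^2)\cap C^1((0,T),L^2)$ and the fact that $u$ solves~\eqref{GL} in $L^2$ then follow from the analyticity of $(\Tsem_\theta(t))_{t\ge0}$ on $L^2$: for $t>0$ the operator $\Tsem_\theta(t)$ maps $L^2$ into $H^2=D(e^{i\theta}\Delta)$, the forcing $s\mapsto e^{i\theta}|u(s)|^\alpha u(s)+\gamma u(s)$ is locally H\"older continuous into $L^2$, and the standard regularity theory for analytic semigroups turns the mild solution into a strong one.

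The key point is the $L^2$ blowup for $\alpha<\tfrac4N$, for which I would solve~\eqref{GLI} in $L^2$ with an existence time depending only on $\|\DI\|_{L^2}$. The complex Gaussian satisfies the smoothing bounds $\|\Tsem_\theta(t)\psi\|_{L^r}\le C(\cos\theta)^{-\frac N2(\frac1\rho-\frac1r)}\,t^{-\frac N2(\frac1\rho-\frac1r)}\|\psi\|_{L^\rho}$ for $1\le\rho\le r\le\infty$, which one reads off from the explicit form of $G_\theta(t)$. When $\alpha\le1$ one may take $\rho=2/(\alpha+1)$, so that $\||u|^\alpha u\|_{L^\rho}=\|u\|_{L^2}^{\alpha+1}$ and the convolution exponent $\tfrac{N\alpha}4<1$; a direct fixed point in $C([0,T],L^2)$ then closes via $\int_0^t(t-s)^{-\frac{N\alpha}4}\,ds\le CT^{1-\frac{N\alpha}4}$. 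For $1<\alpha<\tfrac4N$ the exponent $\rho$ falls below $1$, so instead I would use the space-time norm $C([0,T],L^2)\cap L^q((0,T),L^{\alpha+2})$ with $r=\alpha+2$, $\rho=\tfrac{\alpha+2}{\alpha+1}$ and $q$ chosen so that the corresponding time-weighted estimate is integrable; it is precisely the condition $\alpha<\tfrac4N$ that makes this possible. In both cases the existence time depends only on $\|\DI\|_{L^2}$, and by uniqueness this $L^2$ solution coincides with $u$ on its interval. Therefore, if $\Tma<\infty$ while $\liminf_{t\uparrow\Tma}\|u(t)\|_{L^2}<\infty$, one could restart the $L^2$ problem from a time close to $\Tma$ and extend $u$ beyond $\Tma$ (the extension lying in $\Cz$ by parabolic smoothing), contradicting the $L^\infty$ blowup; hence $\|u(t)\|_{L^2}\to\infty$.

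The delicate point, and the main obstacle, is this $L^2$ local theory in the range $1<\alpha<\tfrac4N$: one must choose the correct space-time exponents, verify that the smoothing estimates for $G_\theta$ close the contraction with a time controlled solely by $\|\DI\|_{L^2}$, and confirm that the $L^2$ and $\Cz$ solutions agree so that the continuation argument is legitimate. The remaining parts are routine given the analyticity of the semigroup and the bound~\eqref{fUB1}.
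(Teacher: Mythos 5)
Your overall route is the same as the paper's: the paper gives no self-contained proof but defers to \cite[Proposition~2.1 and Remark~2.2]{CazenaveDW}, whose argument is exactly the scheme you describe --- a contraction mapping for~\eqref{GLI} in $C([0,T],\Cz)$ based on~\eqref{fUB1} (the Segal-type argument \cite{Segal} the paper alludes to), regularity from the analyticity of $(\Tsem_\theta(t))_{t\ge0}$ on $L^2(\R^N)$, and, for the last assertion, an $L^2$-subcritical local theory in the style of Weissler \cite{Weissler} with existence time depending only on $\|\DI\|_{L^2}$, followed by the uniqueness-plus-smoothing continuation argument. Your treatment of what you correctly identify as the key point --- the range $1<\alpha<\frac{4}{N}$ via time-weighted or space-time norms built on the $L^\rho$--$L^r$ smoothing of $G_\theta$, the coincidence of the $L^2$ and $\Cz$ solutions, and the parabolic smoothing of the extension --- is sound.

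There is, however, one step that fails as written. You justify $H^1$-persistence by claiming that $u\mapsto|u|^\alpha u$ is locally Lipschitz from $\Cz\cap H^1(\R^N)$ into $H^1(\R^N)$. This is true for $\alpha\ge1$ but false for $0<\alpha<1$: the value map $z\mapsto|z|^\alpha z$ is indeed locally Lipschitz on $\C$, but its derivative, which enters through the coefficients $|u|^\alpha$ and $|u|^{\alpha-2}u^2$ in your gradient formula, is only H\"older continuous of exponent $\alpha$ near $z=0$. Consequently the $H^1$ difference estimate degenerates to something of the form $\|u-v\|_{L^\infty}^{\alpha}\,(\|\nabla u\|_{L^2}+\|\nabla v\|_{L^2})$ plus Lipschitz terms, and your fixed point in $\Cz\cap H^1$ does not close on the full parameter range $\alpha>0$ covered by the proposition. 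The standard repair keeps your scheme intact: contract with respect to the weaker metric $\sup_{[0,T]}\bigl(\|u-v\|_{L^\infty}+\|u-v\|_{L^2}\bigr)$ on a set that is merely \emph{bounded} in $H^1$; invariance of the $H^1$ bound uses only the estimate $\||u|^\alpha u\|_{H^1}\le C\|u\|_{L^\infty}^\alpha\|u\|_{H^1}$ (boundedness, not Lipschitz continuity), and completeness follows from weak lower semicontinuity of the $H^1$ norm under $L^2$ convergence. Equivalently, construct the solution in $\Cz$ alone and propagate the $H^1$ bound along the Picard iterates by Gronwall. Continuity into $H^1$, the $H^2$ and $C^1$ regularity for $t>0$, and the validity of~\eqref{GL} in $L^2$ then follow from analytic-semigroup regularity exactly as you say, which is also the route indicated in the paper.
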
 

\begin{rem} \label{eLocGL3} 
Whether the solution given by Proposition~\ref{eLocGL1} is global or not is discussed throughout this paper,
but we can  observe that, given $0\le \theta < \frac {\pi } {2}$ and  $\DI\in \Cz$, the corresponding solution of~\eqref{GL} is global if $\gamma $ is sufficiently negative. 
More precisely, if $\gamma < -\frac {1} {\alpha } [2 (\cos \theta )^{-\frac {N} {2}}  \| \DI \| _{ L^\infty  }]^{\alpha +1}$, then the corresponding solution $u$ of~\eqref{GL} is global and satisfies $  \| u( t) \| _{ L^\infty  } \le   2 (\cos \theta )^{-\frac {N} {2}} e^{\gamma  t} \| \DI \| _{ L^\infty  } $ for all $t\ge 0$.
Indeed,  $v(t) = e^{ - \gamma t} u(t)$ satisfies $v_t = e^{i\theta } ( \Delta v +  e^{\gamma \alpha t} | v |^\alpha v) $, so that
\begin{equation*} 
v(t) = \Tsem_\theta  (t) \DI + \int _0^t e^{\gamma \alpha s} \Tsem_\theta   (t-s) [  |v (s)|^\alpha v(s) ] \, ds .
\end{equation*} 
Setting $\phi (t)= \sup \{  \|v(s)\| _{ L^\infty  };\, 0\le s\le t \}$, it follows from~\eqref{fUB1}  that $\phi (t) \le c  \| \DI \| _{ L^\infty  } + \frac {c} {-\gamma \alpha } \phi (t)^{\alpha +1}$ with $c=  (\cos \theta )^{-\frac {N} {2}} $. 
Therefore,  if  $\gamma < - \frac {1 } {\alpha } (2c)^{\alpha +1}   \| \DI \| _{ L^\infty  }^\alpha $, then $\phi (t) \le 2c  \| \DI \| _{ L^\infty  }$ for all $0\le t<\Tma$, and the desired conclusion follows.
\end{rem} 

It $ \theta = \frac {\pi } {2}$, then~\eqref{GL} is the nonlinear Schr\"o\-din\-ger equation, and $(\Tsem _\theta (t)) _{ t\ge 0 }$ is  a group of isometries (which is not analytic). More restrictive conditions are needed for the local solvability of~\eqref{GL}, and the proofs make use of Strichartz's estimates. The following result is proved in~\cite[Theorem~I]{Kato1}. (Except for the blowup alternatives, which follow from~\cite[Theorems~4.4.1 and 4.6.1]{CLN10}.)

\begin{prop} \label{eLocGL2} 
Suppose $\theta =\frac {\pi } {2}$ and $(N-2)\alpha <4$.
Given any $\DI \in  H^1 (\R^N ) $, there exist $T>0$ and a unique  $u \in  C([0,T], H^1 (\R^N ) ) 
\cap  C^1((0, T), H^{-1} (\R^N ) ) $ which satisfies~\eqref{GL} for all $t\in [0,T]$
and such that $u(0)= \DI$. 
Moreover, $u$ can be extended to a maximal interval $[0, \Tma)$, and if $\Tma <\infty $, then $  \|u(t)\| _{ H^1  }\to \infty $ as $t \uparrow \Tma$. In addition, if $\alpha < \frac {4} {N}$ and $\Tma <\infty $, then  $  \|u(t)\| _{ L^2  }\to \infty $ as $t \uparrow \Tma$.
\end{prop}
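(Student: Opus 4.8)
The plan is to treat the case $\theta = \frac{\pi}{2}$ of~\eqref{GL} as a perturbation of the standard nonlinear Schr\"odinger equation~\eqref{NLS} and to run the classical Strichartz-based contraction argument in the energy space. First I would dispose of the linear term $\gamma u$, which is harmless: multiplication by $\gamma$ is bounded on both $H^1 (\R^N)$ and $H^{-1}(\R^N)$, so it can be carried along inside Duhamel's formula without affecting any estimate. Alternatively, it can be removed by the gauge change $v(t) = e^{-\gamma t} u(t)$, which turns the equation into $v_t = i[\Delta v + e^{\gamma \alpha t} |v|^\alpha v]$, a nonlinear Schr\"odinger equation with a smooth, locally bounded time-dependent factor in front of the nonlinearity. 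It therefore suffices to treat $\gamma = 0$. The working formulation is the integral equation~\eqref{GLI} with $\Tsem_{\frac{\pi}{2}}(t) = e^{it\Delta}$, the Schr\"odinger group of isometries on every $H^s(\R^N)$.

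The heart of the matter is a fixed-point scheme based on Strichartz estimates. I would fix an admissible pair $(q,r)$ adapted to the energy space (for instance $r = \alpha + 2$, with $q$ determined by the admissibility relation $\frac{2}{q} = N(\frac{1}{2} - \frac{1}{r})$) and seek the solution in a ball of $C([0,T], H^1 (\R^N)) \cap L^q((0,T), W^{1,r}(\R^N))$, equipped with a suitable (possibly weaker) metric. The map $u \mapsto \Tsem_{\frac{\pi}{2}}(\cdot)\DI + i\int_0^{\cdot}\Tsem_{\frac{\pi}{2}}(\cdot - s)[|u(s)|^\alpha u(s)]\,ds$ is controlled by the homogeneous and inhomogeneous Strichartz inequalities, a product/chain-rule estimate for $\nabla(|u|^\alpha u)$, and the Sobolev embedding $H^1(\R^N)\hookrightarrow L^{\alpha+2}(\R^N)$. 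The hypothesis $(N-2)\alpha < 4$ is exactly what makes this embedding available (the nonlinearity is energy-subcritical) and what ensures that the H\"older-in-time step yields a positive power of $T$; taking $T$ small then makes the map a contraction and produces a unique local solution in the Strichartz class. The blowup alternative in $H^1$ follows by the standard continuation argument, since the existence time delivered by the contraction depends only on $\|\DI\|_{H^1}$: a finite $\Tma$ forces $\|u(t)\|_{H^1}\to\infty$ as $t\uparrow\Tma$.

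For the last assertion, in the mass-subcritical range $\alpha < \frac{4}{N}$ I would repeat the fixed-point argument at the level of $L^2$ rather than $H^1$, using a purely integrability-based admissible pair with no derivatives. When $\alpha < \frac{4}{N}$ the analogous Strichartz--H\"older computation closes in $C([0,T],L^2(\R^N)) \cap L^q((0,T),L^r(\R^N))$ with an existence time depending only on $\|\DI\|_{L^2}$, and the same continuation argument shows that $\Tma < \infty$ forces $\|u(t)\|_{L^2}\to\infty$. The regularity $u\in C^1((0,T),H^{-1}(\R^N))$ and the fact that $u$ solves~\eqref{GL} in $H^{-1}(\R^N)$ are then read off from the equation itself, since $\Delta u$ and, thanks again to $(N-2)\alpha<4$, the term $|u|^\alpha u$ both lie in $C((0,T), H^{-1}(\R^N))$.

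The step I expect to be the main obstacle is not existence, which is routine once the Strichartz estimates are set up, but \emph{uniqueness} in the full space $C([0,T],H^1(\R^N))$ rather than merely in the smaller Strichartz class where the contraction takes place. Unconditional uniqueness for~\eqref{NLS} in the energy space requires an additional argument that recovers, a posteriori, the missing Strichartz integrability of an arbitrary $H^1$ solution; this is precisely the delicate point treated in~\cite{Kato1}, and I would invoke that analysis rather than reprove it.
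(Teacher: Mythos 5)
Your proposal is correct and follows essentially the same route as the paper, which does not prove this proposition itself but cites Kato~\cite{Kato1} for the Strichartz-based contraction in $H^1(\R^N)$ (including the delicate unconditional uniqueness step you identify) and \cite[Theorems~4.4.1 and~4.6.1]{CLN10} for the $H^1$ and $L^2$ blowup alternatives, the latter being exactly your mass-subcritical fixed-point argument at the $L^2$ level combined with persistence of regularity. The only detail worth making explicit in your last step is that the continuation at the $L^2$ level contradicts maximality of the $H^1$ solution precisely because $H^1$ regularity persists as long as the $L^2$ solution exists.
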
 

As observed above, an essential feature of equation~\eqref{GL} is the energy identities satisfied by its solutions. Set
\begin{gather} 
\Neh (w)=  \int _{\R^N }  |\nabla w|^2 - \int _{\R^N }  |w|^{\alpha +2}  \label{fDI1}  \\
\Ene (w)= \frac {1} {2}\int _{\R^N }  |\nabla w|^2 -\frac {1} {\alpha +2}\int _{\R^N }  |w|^{\alpha +2}  \label{fDE1} 
\end{gather} 
The functionals $\Neh$ and $\Ene$ are well defined on $ \Cz \cap H^1  (\R^N ) $; and if $(N-2) \alpha \le 4$, they are well defined on $H^1 (\R^N ) $.

Suppose $ 0\le \theta < \frac {\pi } {2}$, let $\DI \in \Cz \cap H^1 (\R^N ) $ and let $u$ be the corresponding solution of~\eqref{GL} defined on the maximal interval $[0, \Tma)$, given by Proposition~\ref{eLocGL1}. Multiplying the equation by $ \overline{u} $, we obtain
\begin{equation} \label{fGGLsbumu}
 \int _{\R^N }  \overline{u}  u_t =  \gamma \int  _{ \R^N  }  |u|^2 - e^{i\theta } \Neh (u)
\end{equation} 
and in particular, taking the real part,
\begin{equation} \label{fGGLs}
\frac {d} {dt} \int _{\R^N }  |u|^2= 2\gamma \int  _{ \R^N  }  |u|^2  - 2\cos \theta  \Neh (u)
\end{equation} 
for all $0< t< \Tma$. Multiplying the equation by $e^{-i\theta } u_t$, taking the real part and using~\eqref{fGGLsbumu} yields 
\begin{equation} \label{fGGLqbu}
\frac {d} {dt} \Ene (u(t)) = 
 - \cos \theta \int  _{ \R^N  } |u_t|^2 + \gamma ^2 \cos \theta \int  _{ \R^N  } |u|^2 - \gamma \cos (2 \theta ) \Neh (u).
\end{equation} 
for all $0< t< \Tma$. 
Applying~\eqref{fGGLs}, we see that this is equivalent to 
\begin{equation} \label{fGGLq}
\frac {d} {dt}  \Bigl[ \Ene (u(t)) - \frac {\gamma } {2} \cos \theta \int  _{ \R^N  } |u|^2 \Bigr] = 
 - \cos \theta \int  _{ \R^N  } |u_t|^2 + \gamma \sin ^2 \theta \Neh (u).
\end{equation} 

Suppose now $ \theta =  \frac {\pi } {2}$ and  $(N-2)\alpha <4$, let $\DI \in  H^1 (\R^N ) $ and let $u$ be the corresponding solution of~\eqref{GL} defined on the maximal interval $[0, \Tma)$, given by Proposition~\ref{eLocGL2}. 
Identities corresponding to~\eqref{fGGLsbumu}, \eqref{fGGLs} and~\eqref{fGGLqbu} hold. More precisely, the functions $t\mapsto  \| u(t) \| _{ L^2 }^2$ and $t\mapsto  \Ene( u(t))$ are $C^1$ on $[0, \Tma)$, and
\begin{gather} 
 \int _{\R^N }  \overline{u}  u_t =  \gamma \int  _{ \R^N  }  |u|^2 - i \Neh (u) \label{fSL0} \\
\frac {d} {dt} \int _{\R^N }  |u|^2= 2\gamma \int  _{ \R^N  }  |u|^2  \label{fSL1} \\
\frac {d} {dt} \Ene (u(t)) =   \gamma \Neh (u) \label{fSL2} 
\end{gather} 
for all $0\le t<\Tma$. 
Identity~\eqref{fSL0} is obtained by taking the $H^{-1}-H^1$ duality product of the equation with $  \overline{u} $ (the term $ \int _{\R^N }  \overline{u}  u_t$ is understood as the duality bracket $\langle u_t, u \rangle  _{ H^1, H^{-1} }$). \eqref{fSL1} follows, by taking the real part.
Identity~\eqref{fSL2} is formally obtained by multiplying the equation by $e^{-i\theta } u_t$ and taking the real part. However, the solution is not smooth enough to do so, thus a regularization process is necessary. See~\cite{Ozawa} for a simple justification. 
Still in the case of the Schr\"o\-din\-ger equation $ \theta =  \frac {\pi } {2}$, an essential tool in the blowup arguments is the variance identity. It concerns the variance
\begin{equation} \label{fVar1} 
\Var (w) = \int  _{ \R^N  }  |x|^2  |w|^2
\end{equation} 
which is not defined on $  L^2 (\R^N ) $, but on the weighted space $ L^2 (\R^N ,  |x|^2 dx) $.
It can be proved that if $\DI \in H^1 (\R^N ) \cap L^2 (\R^N ,  |x|^2 dx) $, then the corresponding solution $u$ of~\eqref{GL}  satisfies $u\in C([0, \Tma ), L^2 (\R^N ,  |x|^2 dx) )$. Moreover, the map $t \mapsto \Var (u(t))$ is $C^2$ on $[0, \Tma) $ and 
\begin{gather} 
\frac {d} {dt} \Var ( u(t) ) = -4\Mom (u(t))  + 2\gamma \Var (u(t)) \label{fSL3} \\
\frac {d} {dt} \Mom (u(t))= -2 \int  _{ \R^N  }  |\nabla u|^2 + \frac {N\alpha } {\alpha +2} \int  _{ \R^N  }  |u|^{\alpha +2} + 2\gamma \Mom (u (t) )  \label{fSL4}
\end{gather} 
for all $0\le t<\Tma$, where the functional $\Mom$ is defined by
\begin{equation}  \label{fSL5}
\Mom (w)= \Im \int  _{ \R^N  } ( x\cdot \nabla  \overline{w}  )w
\end{equation} 
for $w\in H^1 (\R^N )  \cap L^2 (\R^N ,  |x|^2 dx) $. The proof of these properties require appropriate regularizations and multiplications, see~\cite[Section~6.5]{CLN10}.

\section{The nonlinear heat equation} \label{sHeat} 

In this section, we consider the nonlinear heat equation~\eqref{NLH}. 
The first blowup result was obtained by Kaplan~\cite{Kaplan}. Its argument applies to positive solutions of the equation set on a bounded domain, and is based on a differential inequality satisfied by the scalar product of the solution with the first eigenfunction. It is easy to extend the argument to the equation set on $\R^N $.
Let $w(x) \equiv e^{-\sqrt{N^2+|x|^2}}$, so that $\Delta w \ge -w$ by  elementary calculations. 
If $\psi =  \| w \| _{ L^1 }^{-1} w$ and $\psi _\lambda (x) = \lambda ^N \psi (\lambda x)$ for $\lambda >0$, then
$ \|\psi _\lambda \| _{ L^1 }=1$ and $\Delta \psi _\lambda  \ge -\lambda ^2\psi_\lambda $. 
Let now $\DI \in \Cz \cap H^1 (\R^N ) $, $\DI \ge 0$, $\DI \not \equiv 0$, and let $u$ be the corresponding solution of~\eqref{NLH} defined on the maximal interval $[0, \Tma)$. The maximum principle implies that $u(t) \ge 0$ for all $0\le t<\Tma$. Multiplying the equation by $\psi _\lambda $, integrating by parts on $\R^N $ and using Jensen's inequality, we obtain
\begin{equation*} 
\begin{split} 
\frac {d} {dt}\int  _{ \R^N  } u\psi _\lambda  & = \int  _{ \R^N  }u \Delta \psi _\lambda + \int  _{ \R^N  }u ^{\alpha +1} \psi _\lambda + \gamma  \int  _{ \R^N  } u\psi _\lambda \\
& \ge (\gamma - \lambda ^2 )  \int  _{ \R^N  } u\psi _\lambda +  \Bigl(  \int  _{ \R^N  } u\psi _\lambda \Bigr)^{\alpha +1}.
\end{split} 
\end{equation*} 
It follows that $f(t) = \int  _{ \R^N  } u\psi _\lambda $ satisfies
\begin{equation} \label{fKap1} 
\frac {df} {dt} \ge (\gamma - \lambda ^2 + f^\alpha )  f 
\end{equation} 
on $[0,\Tma)$. It is not difficult to show that if $f(0) ^\alpha > \lambda ^2 -\gamma $, then~\eqref{fKap1} can only hold on a finite interval, so that $\Tma <\infty $. 
Therefore, we can distinguish two cases. If $\gamma \le 0$, we choose for instance $\lambda =1$ and we see that if $\DI$ is sufficiently ``large" so that $\int  _{ \R^N  } \DI \psi _1>  (1-\gamma )^{\frac {1} {\alpha }}$, then the solution blows up in finite time. 
If $\gamma >0$, then we let $\lambda = \sqrt \gamma $, so that the condition $f(0) ^\alpha > \lambda ^2 -\gamma $ is always satisfied if $\DI \not \equiv 0$. In this case, we see that every nonnegative, nonzero initial value produces a solution of~\eqref{NLH} which blows up in finite time.

Levine~\cite{Levine} established blowup by a different argument. It is based on a differential inequality satisfied by the $L^2$ norm of the solution, derived from the energy identities. This argument applies to sign-changing solutions and, more generally, to complex valued solutions, and to the equation set on any domain, bounded or not. 
Strangely enough, even though Kaplan's argument seems to indicate that blowup is more likely to happen if $\gamma >0$, it turns out that Levine's result only applies to the case $\gamma \le 0$, which we consider first.

\subsection{The case $\gamma \le 0$} \label{ssNLHGneg} 
It is convenient to set 
\begin{gather} 
\Neh _\gamma (w)=  \int _{\R^N }  |\nabla w|^2 - \int _{\R^N }  |w|^{\alpha +2}- \gamma  \int  _{ \R^N  } |u|^2  \label{fHeat3} \\
\Ene _\gamma (w)= \frac {1} {2}\int _{\R^N }  |\nabla w|^2 -\frac {1} {\alpha +2}\int _{\R^N }  |w|^{\alpha +2} -  \frac {\gamma } {2} \int  _{ \R^N  } |u|^2  \label{fHeat4} 
\end{gather} 
for $w\in \Cz \cap H^1 (\R^N ) $.

\begin{thm}[\cite{Levine}] \label{eNLH1} 
Let $\DI \in \Cz \cap H^1 (\R^N ) $  and let $u$ be the corresponding solution of~\eqref{NLH} defined on the maximal interval $[0, \Tma)$, given by Proposition~$\ref{eLocGL1}$. 
If $\gamma \le 0$ and $\Ene _\gamma (\DI ) <0$, where $\Ene _\gamma $ is defined by~\eqref{fHeat4},  then $u$ blows up in finite time, i.e. $\Tma <\infty $.
\end{thm}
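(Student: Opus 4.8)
The plan is to combine the two energy identities of Section~\ref{sCauchy} (specialized to $\theta=0$) with a Cauchy--Schwarz inequality to produce a closed, \emph{superlinear} differential inequality for the squared $L^2$ norm, which cannot hold on an infinite interval; this is the $L^2$-norm differential-inequality strategy attributed to~\cite{Levine}. First I would record the relevant identities for $M(t)\Eqdef\|u(t)\|_{L^2}^2$. Taking $\theta=0$ in~\eqref{fGGLs} and using the definition~\eqref{fHeat3} of $\Neh _\gamma$ gives $M'(t)= -2\Neh _\gamma (u(t))$, while taking $\theta=0$ in~\eqref{fGGLq} and recognizing $\Ene _\gamma$ from~\eqref{fHeat4} gives
\begin{equation*}
\frac{d}{dt}\Ene _\gamma (u(t)) = -\int_{\R^N}|u_t|^2 \le 0 .
\end{equation*}
In particular $\Ene _\gamma (u(t))\le \Ene _\gamma (\DI )<0$ for all $t\in[0,\Tma)$; the monotonicity of $\Ene _\gamma$ is special to $\theta=0$, since the term $\gamma\sin^2\theta\,\Neh(u)$ in~\eqref{fGGLq} vanishes there.

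The next step is the algebraic link between $\Neh _\gamma$ and $\Ene _\gamma$. A direct computation from~\eqref{fHeat3}--\eqref{fHeat4} yields
\begin{equation*}
\Neh _\gamma (w) = (\alpha+2)\,\Ene _\gamma (w) - \frac{\alpha}{2}\Bigl(\int_{\R^N}|\nabla w|^2 - \gamma\int_{\R^N}|w|^2\Bigr).
\end{equation*}
Here is exactly where the hypothesis $\gamma\le 0$ enters: the parenthesis is then nonnegative, so $\Neh _\gamma (w)\le (\alpha+2)\Ene _\gamma (w)$. Feeding this into the formula for $M'$ gives $M'(t)\ge -2(\alpha+2)\Ene _\gamma (u(t)) \ge -2(\alpha+2)\Ene _\gamma (\DI )>0$, so that $M$ is strictly increasing and bounded below by $\|\DI\|_{L^2}^2>0$ (note that $\Ene _\gamma (\DI)<0$ forces $\DI\not\equiv 0$).

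To close the argument I would exploit the dissipation. Writing $M'(t)=2\re\int_{\R^N}\overline{u}\,u_t$ (the quantity in~\eqref{fGGLsbumu} at $\theta=0$ being real), Cauchy--Schwarz gives $(M'(t))^2\le 4M(t)\int_{\R^N}|u_t|^2$. Setting $\varepsilon(t)\Eqdef -\Ene _\gamma (u(t))\ge \varepsilon_0\Eqdef -\Ene _\gamma (\DI )>0$, so that $\varepsilon'(t)=\int_{\R^N}|u_t|^2$, and using $M'\ge 2(\alpha+2)\varepsilon$ (both factors being positive), I obtain
\begin{equation*}
\varepsilon'(t)\ge \frac{(M'(t))^2}{4M(t)}\ge \frac{\alpha+2}{2}\,\frac{\varepsilon(t)\,M'(t)}{M(t)},
\end{equation*}
that is, $(\log\varepsilon)'\ge \tfrac{\alpha+2}{2}(\log M)'$. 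Integrating from $0$ to $t$ yields $\varepsilon(t)\ge c\,M(t)^{(\alpha+2)/2}$ with $c=\varepsilon_0\|\DI\|_{L^2}^{-(\alpha+2)}>0$, and reinserting into $M'\ge 2(\alpha+2)\varepsilon$ gives the autonomous inequality $M'(t)\ge 2(\alpha+2)\,c\,M(t)^{(\alpha+2)/2}$. Since the exponent $\tfrac{\alpha+2}{2}>1$, a standard comparison shows that $M$ must become infinite no later than $t^\ast=\bigl(\alpha(\alpha+2)\,c\bigr)^{-1}\|\DI\|_{L^2}^{-\alpha}<\infty$; as $\|u(t)\|_{L^2}^2=M(t)$ stays finite on $[0,\Tma)$, this forces $\Tma\le t^\ast<\infty$.

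The routine ingredients (the two identities and Cauchy--Schwarz) are furnished by Section~\ref{sCauchy}; \textbf{the main obstacle is engineering the closed superlinear inequality}, i.e.\ seeing that the algebraic bound $\Neh _\gamma\le(\alpha+2)\Ene _\gamma$ (which is precisely where $\gamma\le0$ is used) together with the energy dissipation and Cauchy--Schwarz conspire to give a gain $\varepsilon\ge c\,M^{(\alpha+2)/2}$ with exponent strictly above $1$. This is the concavity mechanism of~\cite{Levine}, here phrased as a first-order inequality for $M$; an equivalent route would be to verify directly that $M^{-\alpha/2}$, or a Levine-type time-integrated functional $\int_0^t M + (T-t)\|\DI\|_{L^2}^2 + \beta(t+\tau)^2$, is concave with negative initial slope.
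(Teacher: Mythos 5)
Your proof is correct, but it follows a genuinely different route from the paper's own proof of Theorem~\ref{eNLH1}; in fact it is essentially the paper's proof of Theorem~\ref{eNLH2} (the Haraux--Weissler variant~\cite{HarauxW}), specialized by discarding the damping terms. The paper proves Theorem~\ref{eNLH1} by Levine's concavity argument on the \emph{time-integrated} quantity $M(t)=\frac12\int_0^t\|u(s)\|_{L^2}^2\,ds$: assuming $\Tma=\infty$, it combines~\eqref{fHeatb2} with a Cauchy--Schwarz inequality in time and space to get $MM''\ge \frac{\alpha+4}{4}(M')^2$ for large $t$, i.e.\ eventual concavity of $M^{-\alpha/4}$, which contradicts $M^{-\alpha/4}\to 0$; this is indirect and yields no estimate of $\Tma$. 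You instead work with $\|u(t)\|_{L^2}^2$ itself and close a first-order superlinear differential inequality: your logarithmic-derivative step $(\log\varepsilon)'\ge\frac{\alpha+2}{2}(\log M)'$ is exactly the monotonicity~\eqref{fHeat13} of the paper with the $\gamma$-terms dropped, and your comparison argument is the integration step~\eqref{fHeat15b2}--\eqref{fHeat15b3}. What each buys: the paper's argument is the classical contradiction scheme (and is the template reused later, e.g.\ in Lemma~\ref{eGGLq}), while yours is direct and quantitative, proving strictly more than the statement asks, namely $\Tma\le \|\DI\|_{L^2}^2/\bigl(\alpha(\alpha+2)(-\Ene_\gamma(\DI))\bigr)$, which coincides with the $\gamma=0$ case of~\eqref{fHeatb5}. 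The one thing you give up is sharpness for $\gamma<0$: by discarding the nonnegative term $\frac{(-\gamma)\alpha}{2}\|u\|_{L^2}^2$ in~\eqref{fHeat9} you lose the refined $\gamma<0$ bound of~\eqref{fHeatb5}, which is smaller than yours and is bounded independently of $\DI$ (Remark~\ref{eNLH3}~\eqref{eNLH3:3}).
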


\begin{proof} 
We obtain a differential inequality on the quantity
\begin{equation} \label{fHeatb1} 
M(t)= \frac {1} {2} \int _0^t  \| u(s) \| _{ L^2 }^2 ds.
\end{equation} 
Formulas~\eqref{fGGLsbumu} and~\eqref{fGGLq} (with $\theta =0$) yield
\begin{gather}
 \int _{\R^N }  \overline{u}  u_t =   -  \Neh _\gamma (u)  \label{fHeat5} \\
\frac {d} {dt}  \Ene _\gamma (u(t))  =   -  \int  _{ \R^N  } |u_t|^2  \le 0. \label{fHeat6}
\end{gather} 
Identity~\eqref{fHeat6} implies
\begin{equation} \label{fHeat7}
 \Ene _\gamma (u(t))  + \int _0^t  \| u_t \| _{ L^2 }^2  = \Ene _\gamma  (\DI).
\end{equation} 
Moreover, it follows from~\eqref{fHeat3} and~\eqref{fHeat4} that
\begin{equation} \label{fHeat9} 
\Neh _\gamma (u(t))  \le (\alpha +2) \Ene _\gamma (u(t)) -  \frac { (- \gamma ) \alpha} {2}  \| u \| _{ L^2 }^2
\end{equation} 
so that by~\eqref{fHeat7},
\begin{equation}  \label{fHeat10} 
\Neh _\gamma (u(t)) \le  (\alpha +2) \Ene _\gamma (\DI ) -  \frac { (- \gamma ) \alpha} {2}  \| u \| _{ L^2 }^2 - (\alpha +2)  \int _0^t  \| u_t \| _{ L^2 }^2<0.
\end{equation} 
We deduce from~\eqref{fHeatb1}, \eqref{fHeat5}  and~\eqref{fHeat10} that
\begin{equation}  \label{fHeatb2} 
M'' (t)  = \Re  \int _{\R^N }  \overline{u}  u_t =   -  \Neh _\gamma (u) \ge -  (\alpha +2) \Ene _\gamma (\DI ) +  (\alpha +2)  \int _0^t  \| u_t \| _{ L^2 }^2 > 0.
\end{equation} 
All the above formulas hold for $0\le t<\Tma$. Assume now by contradiction that $\Tma =\infty $. 
We deduce in particular from~\eqref{fHeatb2} that
\begin{equation}  \label{fHeatb3} 
M' (t) \goto  _{ t\to \infty  } \infty , \quad M (t) \goto  _{ t\to \infty  } \infty .
\end{equation} 
It follows from~\eqref{fHeatb1}, \eqref{fHeatb2}, and Cauchy-Schwarz's inequality (in time and space) that
\begin{equation} \label{fHeatb4} 
\begin{split} 
M(t) M''(t) & \ge \frac {\alpha +2} {2} \Bigl(   \int _0^t  \| u \| _{ L^2 }^2  \Bigr)  \Bigl(  \int _0^t  \| u_t \| _{ L^2 }^2  \Bigr) \\
& \ge \frac {\alpha +2} {2} \Bigl( \int _0^t   \Bigl|  \int _{\R^N }  \overline{u}  u_t  \Bigr| \Bigr)^2   \ge \frac {\alpha +2} {2} \Bigl( \int _0^t   \Bigl|  \Re \int _{\R^N }  \overline{u}  u_t  \Bigr| \Bigr)^2 \\
& = \frac {\alpha +2} {2} \Bigl( \int _0^t  M'' (s) \Bigr)^2  = \frac {\alpha +2} {2} ( M'(t) - M' (0 ))^2 .
\end{split} 
\end{equation} 
We deduce from~\eqref{fHeatb3} that $\frac {\alpha +2} {2} ( M'(t) - M' (0 ))^2 \ge \frac {\alpha +4} {4}  M'(t) ^2$ for $t$ sufficiently large. Therefore~\eqref{fHeatb4} yields
\begin{equation*} 
M(t) M''(t) \ge  \frac {\alpha +4} {4}  M'(t) ^2
\end{equation*} 
which means that $M (t) ^{-\frac {\alpha } {4}}$ is concave for $t$ large. Since $M (t) ^{-\frac {\alpha } {4}} \to 0$ as $t\to \infty $ by~\eqref{fHeatb3}, we obtain a contradiction.
\end{proof} 

The proof of Theorem~\ref{eNLH1} does not immediately provide an estimate of $\Tma$ in terms of $\DI$. It turns out that a variant of that proof, given in~\cite[Proposition~5.1]{HarauxW} yields such an estimate.

\begin{thm} \label{eNLH2} 
Under the assumptions of Theorem~$\ref{eNLH1}$, we have
\begin{equation} \label{fHeatb5} 
\Tma \le 
 \begin{cases} 
 \displaystyle 
  \frac {    \| \DI \| _{ L^2 }^2 } { \alpha (\alpha +2) (- \Ene _\gamma  ( \DI ) ) }  & \gamma =0 \\
 \displaystyle 
 \frac {1} {-\gamma \alpha } \log \Bigl(  1+  \frac {   -2 \gamma   \| \DI \| _{ L^2 }^2 } { 2(\alpha +2) (- \Ene _\gamma  ( \DI ) )   - \gamma \alpha      \| \DI  \| _{ L^2 }^2 } \Bigr) & \gamma <0 .
 \end{cases} 
\end{equation} 
\end{thm}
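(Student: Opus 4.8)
The plan is to upgrade the concavity argument behind Theorem~\ref{eNLH1} into a quantitative one by reducing everything to a single autonomous superlinear differential inequality that can be integrated in closed form. Write $y(t) = \| u(t) \| _{ L^2 }^2$ and $H(t) = - \Ene _\gamma ( u(t) )$. By \eqref{fHeat5}--\eqref{fHeat6}, $H$ is nondecreasing with $H(t) \ge H(0) = - \Ene _\gamma (\DI ) > 0$, while $H' = \| u_t \| _{ L^2 }^2$ and $\frac {1} {2} y' = \Re \int _{ \R^N } \overline{u} u_t = - \Neh _\gamma (u)$. Inserting \eqref{fHeat9} into the last identity, and applying Cauchy--Schwarz to $\frac {1} {2} y' = \Re \int _{ \R^N } \overline{u} u_t$, I obtain the two pointwise relations
\[ y' \ge 2(\alpha +2) H + (-\gamma ) \alpha\, y , \qquad H' \ge \frac { (y')^2 } { 4 y } . \]
Both hold on $[0,\Tma)$, and since the first forces $y' > 0$, the function $y$ stays bounded below by $y(0) = \| \DI \| _{ L^2 }^2 > 0$.

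The key step is to decouple this pair. Substituting the lower bound for $y'$ into one factor of $y'$ on the right-hand side of the second relation gives $H' \ge \frac {\alpha +2} {2} \frac {y'} {y} H + \frac { (-\gamma ) \alpha } {4} y'$, which is exactly the assertion that
\[ \frac {d} {dt} \Bigl[ y^{ - \frac {\alpha +2} {2} } H + \frac {-\gamma } {2} y^{ -\frac {\alpha } {2} } \Bigr] \ge 0 . \]
Hence the bracket never drops below its initial value $K = \| \DI \| _{ L^2 }^{ -(\alpha +2) } ( - \Ene _\gamma (\DI ) ) + \frac {-\gamma } {2} \| \DI \| _{ L^2 }^{ -\alpha }$, which yields the pointwise bound $H \ge K y^{ \frac {\alpha +2} {2} } - \frac {-\gamma } {2} y$. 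Feeding this back into the first relation and combining the two linear contributions collapses the system to the autonomous inequality
\[ y' \ge 2 (\alpha +2) K\, y^{ 1 + \frac {\alpha } {2} } + 2\gamma y \]
on $[0,\Tma)$.

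It then remains to integrate this scalar inequality. With $z = y^{ -\frac {\alpha } {2} } = \| u \| _{ L^2 }^{ -\alpha }$ it becomes the linear differential inequality $z' \le - \alpha \gamma z - \alpha (\alpha +2) K$. For $\gamma = 0$ the right-hand side is the negative constant $-\alpha (\alpha +2) K$, so $z$ reaches $0$ no later than $z(0) / [\alpha (\alpha +2) K]$, and simplifying gives the first line of \eqref{fHeatb5}. For $\gamma < 0$ the coefficient $-\alpha \gamma$ is positive, so I would compare $z$ with the solution of the associated linear equation and conclude that $z$ vanishes no later than the extinction time of that solution; writing this time as $\frac {1} {-\alpha \gamma } \log \frac {q} {q - p z(0)}$ with $p = -\alpha \gamma$ and $q = \alpha (\alpha +2) K$, and rewriting $\frac {q} {q - p z(0)} = 1 + \frac {p z(0)} {q - p z(0)}$, reproduces the second line of \eqref{fHeatb5}. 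Since $z \to 0$ forces $\| u(t) \| _{ L^2 } \to \infty$, this bounds $\Tma$.

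I expect the main obstacle to be the recognition, in the second paragraph, of the precise combination $y^{ -(\alpha +2)/2 } H + \frac {-\gamma } {2} y^{ -\alpha /2 }$ that renders the coupled $(y, H)$ system exactly integrable; once this integrating factor is found, the rest is forced. A secondary difficulty is the bookkeeping for $\gamma < 0$: the substitution turns the superlinear inequality into a linear one whose homogeneous coefficient is \emph{positive}, so the finite extinction time of $z$ is produced by the constant forcing term and must be extracted by a comparison argument rather than by monotonicity. Matching the exact constants displayed in \eqref{fHeatb5} is where the care is needed.
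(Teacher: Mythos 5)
Your proof is correct and follows essentially the same route as the paper: the same pair of differential inequalities (the paper's \eqref{fHeat11b1} and \eqref{fHeat12}), the same conserved-direction quantity $y^{-\frac{\alpha+2}{2}}H + \frac{-\gamma}{2}y^{-\frac{\alpha}{2}}$ (the paper's \eqref{fHeat13}--\eqref{fHeat15}), and the same final reduction to an autonomous Bernoulli-type inequality. The only cosmetic differences are that you work with $H=-\Ene_\gamma(u)$ rather than $e=\Ene_\gamma(u)$, and you integrate the final inequality by comparison of $z=y^{-\alpha/2}$ with a linear ODE instead of via the paper's integrating factor $(e^{-2\gamma t}f)^{-\alpha/2}$ in \eqref{fHeat15b2}; both yield the identical bound.
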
 

\begin{proof} 
Set 
\begin{equation}  \label{fHeat11}
f(t)=  \|u(t) \| _{ L^2 }^2, \quad e(t) = \Ene _\gamma (u(t))  .
\end{equation} 
We first obtain an upper bound on $e$ in terms of $f$, then a differential inequality on $f$. 
It follows from~\eqref{fHeat5} and~\eqref{fHeat9} that
\begin{equation} \label{fHeat11b1}
\frac {df} {dt} \ge  2(\alpha +2)   (-e) + (-\gamma )\alpha f  .
\end{equation}
Since $\frac {df} {dt} >0$ by~\eqref{fHeatb2}, we deduce from~\eqref{fHeat6}, Cauchy-Schwarz's inequality, \eqref{fHeat5} and~\eqref{fHeat11b1} that
\begin{equation}  \label{fHeat12}
\begin{split} 
- f \frac {de} {dt} & = \int  |u|^2 \int  |u_t|^2 \ge  \Bigl| \int  \overline{u}u_t  \Bigr|^2 
 = \frac {1} {4}    \Bigl( \frac {df} {dt} \Bigr)^2 \\
& \ge \frac {1} {2} \Bigl( - (\alpha +2) e +  \frac { (- \gamma ) \alpha} {2}  f  \Bigr)  \frac {df} {dt} .
\end{split} 
\end{equation} 
This means that
\begin{equation}  \label{fHeat13}
\frac {d} {dt} \Bigl(  -e f^{-\frac {\alpha +2} {2}} + \frac {-\gamma } {2} f^{-\frac {\alpha } {2}} \Bigr)\ge 0
\end{equation} 
so that
\begin{equation} \label{fHeat14}
-e + \frac {-\gamma  } {2 } f  \ge \eta f^{\frac {\alpha +2} {2}} 
\end{equation}
with
\begin{equation} \label{fHeat15}
\eta =  -\Ene_\gamma   ( \DI )  \| \DI \| _{ L^2 }^{- (\alpha +2)} + \frac {-\gamma  } {2 }  \| \DI \| _{ L^2 }^{-\alpha } >0.
\end{equation}
It follows from~\eqref{fHeat11b1} and~\eqref{fHeat14} that
\begin{equation*}
\frac {df} {dt} \ge  -2(-\gamma )  f +  2(\alpha +2) \eta   f^{\frac {\alpha +2} {2}}.
\end{equation*}
Therefore,
\begin{equation} \label{fHeat15b2}
\frac {d} {dt}  \Bigl[ (e^{ -2 \gamma t  } f)^{-\frac {\alpha } {2}} \Bigr] + \alpha (\alpha +2) \eta  e^{ \gamma \alpha t  }  \le 0.
\end{equation}
After integration, then letting $t\uparrow \Tma$, we deduce that
\begin{equation} \label{fHeat15b3}
\alpha (\alpha +2) \eta  \int _0^{\Tma } e^{ \gamma \alpha t  }dt  \le   f(0)^{-\frac {\alpha } {2}}.
\end{equation}
Expressing $\eta $ and $f(0)$ in terms of $\DI$, estimate~\eqref{fHeatb5} easily follows in both the cases $\gamma =0$ and $\gamma <0$.
\end{proof} 

\begin{rem} \label{eNLH3} 
Here are some comments on Theorems~\ref{eNLH1} and~\ref{eNLH2}.
\begin{enumerate}[{\rm (i)}] 

\item \label{eNLH3:0}  
Suppose $\DI \not = 0$ and $\Ene _\gamma ( \DI) = 0$. In particular,  $\Neh _\gamma ( \DI) <0$. Therefore, $\DI $ is not a stationary solution of~\eqref{NLH}, and it follows from~\eqref{fHeat6} that $\Ene _\gamma ( u(t) ) < 0$ for all $0<t< \Tma$. Thus  we can apply Theorems~\ref{eNLH1} and~\ref{eNLH2} with $\DI  $ replaced by $u( \varepsilon )$, and let $\varepsilon \downarrow 0$. In particular, we see that $\Tma <\infty $. Moreover, estimate~\eqref{fHeatb5} holds if $\gamma <0$. 

\item \label{eNLH3:1} Given any nonzero $\varphi \in \Cz \cap H^1 (\R^N ) $, we have $\Ene _\gamma (\lambda \varphi ) )<0$ if $ |\lambda |$ is sufficiently large. Thus we see that the sufficient condition $\Ene _\gamma ( \DI) < 0$ can indeed be achieved by certain initial values, for any  $\alpha >0$ and $\gamma \le 0$. 

\item \label{eNLH3:2} Let $\alpha >0$ and fix $\DI  \in \Cz \cap H^1 (\R^N )$. It is clear that if $\gamma $ is sufficiently negative, then $\Ene _\gamma ( \DI) \ge 0$, so that one cannot apply Theorem~\ref{eNLH1}. This is not surprising, 
since the corresponding solution of~\eqref{NLH} is global if $\gamma $ is sufficiently negative. (See Remark~\ref{eLocGL3}.)

\item \label{eNLH3:3} Suppose $\gamma <0$.
It follows from~\eqref{fHeatb5} and the assumption  $\Ene _\gamma  ( \DI )<0$ that
$\Tma \le  \frac {1} {-\gamma \alpha } \log (  1+  \frac {   2   } {   \alpha    } )$.
In particular, we see that for $\DI$ as in Theorem~\ref{eNLH2}, the blowup time is bounded in terms of $\alpha $ and $\gamma $ only, independently of $\DI$.

\end{enumerate} 
\end{rem} 

As observed in Remark~\ref{eNLH3}~\eqref{eNLH3:3}, in the case $\gamma <0$, Theorem~\ref{eNLH2} does not apply to solutions for which the blow-up time would be arbitrarily large. When
\begin{equation} \label{fPSb1}
(N-2) \alpha < 4
\end{equation}
this can be improved by using the potential well argument of Payne and Sattinger~\cite{PayneS}. 
To this end, we introduce some notation.
Assuming~\eqref{fPSb1} and $\gamma <0$, we denote by $\Gst _\gamma $ the unique positive, radially symmetric, $H^1$ solution of the equation
\begin{equation} \label{fPSb2}
-\Delta \Gst  - \gamma \Gst =  |\Gst |^\alpha \Gst ,
\end{equation}
and we recall below the following well-known properties of $\Gst _\gamma $. 

\begin{prop} \label{ePSb1}
Assume~\eqref{fPSb1} and $\gamma <0$, and let $\Gst _\gamma \in H^1 (\R^N ) $ be the unique positive, radially symmetric solution of~\eqref{fPSb2}.
\begin{enumerate}[{\rm (i)}]

\item \label{ePSb1:1} $\Ene_\gamma ( \Gst _\gamma  )>0$ and $\Neh_\gamma (\Gst _\gamma )=0$.

\item \label{ePSb1:2} $\displaystyle \Ene_\gamma (\Gst _\gamma )= \inf  \Bigl\{ \Ene_\gamma (v);\, v\in H^1 (\R^N ) , v\not = 0, \Neh _\gamma (v) =0 \Bigr\}$.

\item \label{ePSb1:3} If $u\in H^1 (\R^N ) $, $\Ene _\gamma (u) <  \Ene_\gamma (\Gst _\gamma )$ and $\Neh_\gamma (u) <0$, then $\Neh _\gamma ( u) \le - (\Ene_\gamma (\Gst _\gamma ) - \Ene _\gamma (u))$.
\end{enumerate}
\end{prop}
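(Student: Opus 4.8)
The plan is to establish the three items in order, the decisive point being the potential-well inequality~(iii), which will rest on (i) and (ii).

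For~(i), I would multiply~\eqref{fPSb2} by $\Gst_\gamma$ (which is real-valued) and integrate over $\R^N$, obtaining $\int_{\R^N}|\nabla \Gst_\gamma|^2 - \gamma\int_{\R^N}|\Gst_\gamma|^2 = \int_{\R^N}|\Gst_\gamma|^{\alpha+2}$; in view of~\eqref{fHeat3} this is exactly $\Neh_\gamma(\Gst_\gamma)=0$. For the sign of the energy I would use the elementary identity
\begin{equation*}
\Ene_\gamma(w) = \tfrac12\,\Neh_\gamma(w) + \frac{\alpha}{2(\alpha+2)}\int_{\R^N}|w|^{\alpha+2},
\end{equation*}
valid for every $w\in H^1(\R^N)$ and immediate from~\eqref{fHeat3}--\eqref{fHeat4}. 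At $w=\Gst_\gamma$ it gives $\Ene_\gamma(\Gst_\gamma)=\frac{\alpha}{2(\alpha+2)}\int_{\R^N}|\Gst_\gamma|^{\alpha+2}>0$ since $\Gst_\gamma\not\equiv0$.

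Part~(ii) is the classical variational characterization of $\Gst_\gamma$ as the least-energy point of the Nehari set $\mathcal N=\{v\in H^1(\R^N):v\neq0,\ \Neh_\gamma(v)=0\}$. Writing $m=\inf_{\mathcal N}\Ene_\gamma$, I would proceed in three standard steps. First, on $\mathcal N$ one has $\Ene_\gamma(v)=\frac{\alpha}{2(\alpha+2)}(\|\nabla v\|_{L^2}^2-\gamma\|v\|_{L^2}^2)$, a quantity equivalent (as $\gamma<0$) to $\|v\|_{H^1}^2$, so any minimizing sequence is bounded in $H^1$ and, after passing to the Schwarz symmetrizations of its moduli, may be assumed nonnegative and radial. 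Second, the subcritical restriction~\eqref{fPSb1} makes the embedding of radial $H^1(\R^N)$ functions into $L^{\alpha+2}(\R^N)$ compact, so the sequence converges strongly to a nonzero minimizer, and a Lagrange-multiplier computation on the natural constraint $\mathcal N$ shows that this minimizer solves~\eqref{fPSb2}. Third, being a positive radial $H^1$ solution of~\eqref{fPSb2}, it must coincide with $\Gst_\gamma$ by the uniqueness already invoked in the definition of $\Gst_\gamma$; hence $m=\Ene_\gamma(\Gst_\gamma)$. I would cite the standard references for the compactness and uniqueness facts rather than reproduce them.

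For part~(iii), let $u\in H^1(\R^N)$ satisfy $\Neh_\gamma(u)<0$ (so $u\neq0$), and set $A=\int_{\R^N}|\nabla u|^2-\gamma\int_{\R^N}|u|^2>0$ and $B=\int_{\R^N}|u|^{\alpha+2}>0$. I would consider $j(\lambda)=\Ene_\gamma(\lambda u)=\frac{\lambda^2}{2}A-\frac{\lambda^{\alpha+2}}{\alpha+2}B$ for $\lambda\ge0$, noting that $\lambda j'(\lambda)=\Neh_\gamma(\lambda u)=\lambda^2A-\lambda^{\alpha+2}B$. Since $\Neh_\gamma(u)=A-B<0$ there is a unique $\lambda_0=(A/B)^{1/\alpha}\in(0,1)$ with $\Neh_\gamma(\lambda_0u)=0$, so $\lambda_0u\in\mathcal N$ and~(ii) yields $\Ene_\gamma(\Gst_\gamma)\le j(\lambda_0)$. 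It then remains to bound $j(\lambda_0)-j(1)$. From $j''(\lambda)=A-(\alpha+1)B\lambda^\alpha$ one has $j''(\lambda_0)=-\alpha A<0$, and $j''$ is decreasing, so $j'$ is strictly decreasing on $[\lambda_0,1]$; as $j'(\lambda_0)=0$ this gives $|j'(\lambda)|\le|j'(1)|$ there, whence
\begin{equation*}
j(\lambda_0)-j(1)=\int_{\lambda_0}^1|j'(\lambda)|\,d\lambda\le(1-\lambda_0)\,|j'(1)|\le|j'(1)|=-\Neh_\gamma(u).
\end{equation*}
Combining, $\Ene_\gamma(\Gst_\gamma)-\Ene_\gamma(u)\le j(\lambda_0)-j(1)\le-\Neh_\gamma(u)$, which is the assertion of~(iii). (Notably this chain uses only $\Neh_\gamma(u)<0$; the extra hypothesis $\Ene_\gamma(u)<\Ene_\gamma(\Gst_\gamma)$ is what turns the conclusion into a genuine quantitative improvement of $\Neh_\gamma(u)<0$.)

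The main obstacle is concentrated in part~(ii): the existence of a minimizer rests on a compactness argument (symmetrization together with the compact radial embedding afforded by~\eqref{fPSb1}), and its identification with $\Gst_\gamma$ uses the nontrivial uniqueness theorem for positive radial solutions of~\eqref{fPSb2}. By comparison, (i) is a one-line computation and (iii), once (ii) is in hand, reduces to the elementary monotonicity of $j'$ on $[\lambda_0,1]$.
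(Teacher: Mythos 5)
Your proposal is correct and takes essentially the same approach as the paper: items (i)--(ii) are treated as classical facts (the paper simply cites Willem's book, as you propose to do), and for (iii) both you and the paper study the ray function $\lambda \mapsto \Ene_\gamma(\lambda u)$, locate the unique Nehari point $\lambda_0 = t^* < 1$, apply (ii) there, and exploit concavity on $[\lambda_0,1]$. Your estimate $j(\lambda_0)-j(1)=\int_{\lambda_0}^1 |j'(\lambda)|\,d\lambda \le (1-\lambda_0)\,|j'(1)| \le -\Neh_\gamma(u)$ is precisely the paper's tangent-line inequality $h(1)\ge h(t^*)+(1-t^*)h'(1)$ in integrated form.
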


\begin{proof} 
The first two properties are classical, see for instance~\cite[Chapter~3]{Willem}.
Next, let $u\in H^1 (\R^N ) $ with $\Neh_\gamma (u)<0$, and set $h(t)= \Ene_\gamma  (tu)$ for $t>0$. 
It follows easily that $h'(t)=  \frac {1} {t} \Neh_\gamma (tu)$, so that there exists a unique $t^* >0$ such that $h$ is increasing on $[0, t^*]$ and decreasing and concave on $[t^*, \infty )$. In particular, $\Neh _\gamma (t^* u)=0$, thus $h (t^*)= \Ene _\gamma (t^*u) \ge \Ene_\gamma (\Gst _\gamma )$. Moreover, since $\Neh _\gamma (u)<0$ we have $t^*<1$ and by the concavity of $h$ on $[t^*, 1]$, 
\begin{equation*} 
\begin{split} 
\Ene _\gamma (u)= h(1) &\ge h (t^*) + (1-t^*) h '(1) = h (t^*) + (1-t^*) \Neh _\gamma (u) \\ & \ge \Ene_\gamma (\Gst _\gamma ) + (1-t^*) \Neh _\gamma (u) \ge \Ene_\gamma (\Gst _\gamma ) + \Neh _\gamma (u) 
\end{split} 
\end{equation*} 
from which~\eqref{ePSb1:3} follows.
\end{proof} 

We have the following result.

\begin{thm} \label{ePSb3}
Assume~\eqref{fPSb1}, $\gamma <0$, and let $\Gst _\gamma \in H^1 (\R^N ) $ be the unique positive, radially symmetric solution of~\eqref{fPSb2}.
Let $\DI \in \Cz \cap H^1 (\R^N ) $ and let $u$ be the corresponding solution of~\eqref{NLH} defined on the maximal interval $[0, \Tma)$.
If  $ \Ene _\gamma  (\DI ) < \Ene_\gamma (\Gst _\gamma )$ and $\Neh _\gamma ( \DI )<0$, then $u$ blows up in finite time, and
\begin{equation} \label{ePSb3:1}
\Tma \le \frac {1} {-\gamma \alpha  } \left[ \frac {(\alpha +4) [ \Ene _\gamma ( \DI ) ]^+} {\Ene_\gamma (\Gst _\gamma ) - \Ene _\gamma ( \DI )} +   \log  \Bigl(  \frac {2(\alpha +2)} {\alpha } \Bigr) \right].
\end{equation}
\end{thm}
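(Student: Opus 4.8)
The plan is to combine the potential-well (invariance) argument of Payne and Sattinger with the concavity-type differential inequality already exploited in Theorem~\ref{eNLH2}. Throughout I write $f(t)=\|u(t)\|_{L^2}^2$ and $e(t)=\Ene_\gamma(u(t))$, and I abbreviate the well depth as $d=\Ene_\gamma(\Gst_\gamma)>0$. First I would show that the ``unstable set'' is invariant: for all $t\in[0,\Tma)$ one has $e(t)\le\Ene_\gamma(\DI)<d$ and $\Neh_\gamma(u(t))<0$. The energy bound is immediate from~\eqref{fHeat6}, which gives $e'\le0$. For the sign of $\Neh_\gamma$, suppose it failed and let $t_0$ be the first time with $\Neh_\gamma(u(t_0))=0$; then $t_0>0$ and $\Neh_\gamma(u(t))<0$ on $[0,t_0)$. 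Since $f'=-2\Neh_\gamma(u)$ by~\eqref{fHeat5}, $f$ is strictly increasing on $[0,t_0)$, whence $u(t_0)\neq0$; but then the variational characterization in Proposition~\ref{ePSb1}\,(ii) forces $e(t_0)\ge d$, contradicting $e(t_0)\le\Ene_\gamma(\DI)<d$. Hence $\Neh_\gamma(u(t))<0$, $f'(t)>0$ and $u(t)\neq0$ throughout, and Proposition~\ref{ePSb1}\,(iii) applies at every time, yielding the crucial estimate
\[
-\Neh_\gamma(u(t))\ \ge\ d-e(t)\ \ge\ d-\Ene_\gamma(\DI)\ >\ 0 .
\]

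Next I would assemble the same ingredients as in Theorem~\ref{eNLH2}: the identity $\tfrac12 f'=-\Neh_\gamma(u)$; the algebraic inequality $(\alpha+2)e-\Neh_\gamma(u)\ge\frac{\alpha(-\gamma)}2 f$ coming from~\eqref{fHeat9}, which reads $f'\ge\alpha(-\gamma)f-2(\alpha+2)e$; the Cauchy--Schwarz estimate $-f e'\ge\frac14(f')^2$ obtained from~\eqref{fHeat6} and~\eqref{fHeat5}; and, as the new ingredient, the well estimate $\tfrac12 f'=-\Neh_\gamma(u)\ge d-e$ just established. Exactly as in the derivation of~\eqref{fHeat13}, the first three show that
\[
\Phi(t):=-e\,f^{-\frac{\alpha+2}2}+\tfrac{-\gamma}2\,f^{-\frac{\alpha}2}\quad\text{is nondecreasing,}
\]
and rearranging leads to the Bernoulli-type inequality
\[
\tfrac{d}{dt}\bigl[(e^{-2\gamma t}f)^{-\frac\alpha2}\bigr]\ \le\ -\alpha(\alpha+2)\,\Phi(t)\,e^{\gamma\alpha t},
\]
whose integration controls $\Tma$ as soon as $\Phi$ is bounded below by a positive quantity.

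The difference with Theorem~\ref{eNLH2}, and the main obstacle, is that when $\Ene_\gamma(\DI)>0$ the initial value $\Phi(0)=\eta$ of~\eqref{fHeat15} need not be positive, so the bare monotonicity $\Phi\ge\eta$ is useless. This is exactly where the well estimate enters, and I would organize the conclusion in two regimes. While $e\ge0$, the combination $-f e'\ge\frac14(f')^2\ge\frac12 f'(d-e)>0$ together with the uniform bound $f'\ge2(d-\Ene_\gamma(\DI))>0$ forces the energy to descend, so that $e$ reaches the level $0$ at some finite time $t_1$; tracking the above differential inequalities — with the denominator $d-\Ene_\gamma(\DI)$ supplied by Proposition~\ref{ePSb1}\,(iii) — yields a bound of the form $t_1\le\frac{1}{-\gamma\alpha}\cdot\frac{(\alpha+4)[\Ene_\gamma(\DI)]^+}{d-\Ene_\gamma(\DI)}$. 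For $t\ge t_1$ one has $e(t)\le0$, hence $\Phi(t)\ge\frac{-\gamma}2 f(t)^{-\frac\alpha2}>0$, so the argument of Theorem~\ref{eNLH2} bounds the remaining life-span by $\frac1{-\gamma\alpha}\log\frac{2(\alpha+2)}{\alpha}$; adding the two regimes gives~\eqref{ePSb3:1}. I expect the delicate point to be precisely the bookkeeping in the first regime — converting the differential inequalities into the stated explicit bound on $t_1$ with the sharp constant $(\alpha+4)$ and the correct log-constant at the transition — whereas the conceptual heart, namely the invariance of $\{\Neh_\gamma<0\}$ and the quantitative well lower bound $-\Neh_\gamma(u)\ge d-e$, is the part that makes the whole scheme close up.
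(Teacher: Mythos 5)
Your first half coincides with the paper's proof: the invariance of the region $\{\Neh_\gamma<0\}$ below the well level, and the quantitative bound $-\Neh_\gamma(u(t))\ge d-\Ene_\gamma(\DI)$ (write $d=\Ene_\gamma(\Gst_\gamma)$, $e_0=\Ene_\gamma(\DI)$, $e(t)=\Ene_\gamma(u(t))$, $f(t)=\|u(t)\|_{L^2}^2$) are exactly the paper's key observation~\eqref{fPSb3}, and the plan of restarting the flow at a later time and rerunning the computation of Theorem~\ref{eNLH2} is also the paper's. The gap is in what you wait for. You wait until the \emph{energy} crosses zero at some time $t_1$, and you assert that ``tracking the differential inequalities'' gives $t_1\le\frac{1}{-\gamma\alpha}\cdot\frac{(\alpha+4)[e_0]^+}{d-e_0}$. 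This is precisely the step you never prove, and the inequalities you have assembled cannot deliver that constant. The only mechanism forcing $e$ down is $-fe'\ge\frac14(f')^2$, whose strength degenerates as $f$ grows; the available lower bounds on $f'$ are $f'\ge 2(d-e)$ and $f'\ge(-\gamma)\alpha f-2(\alpha+2)e$, and the strongest descent rate you can assert without further knowledge of $f(t)$ is the minimum over admissible $f>0$,
\begin{equation*}
-e'\ \ge\ \min_{f>0}\ \frac{\bigl[\max\{2(d-e),\,(-\gamma)\alpha f-2(\alpha+2)e\}\bigr]^2}{4f}
\ =\ \frac{(-\gamma)\alpha\,(d-e)^2}{2\,[\,d+(\alpha+1)e\,]},
\end{equation*}
the minimum being attained at the crossover $f=\frac{2[d+(\alpha+1)e]}{(-\gamma)\alpha}$. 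Integrating this rate from $e_0$ down to $0$ gives
\begin{equation*}
t_1\ \le\ \frac{2}{(-\gamma)\alpha}\Bigl[(\alpha+2)\,\frac{e_0}{d-e_0}-(\alpha+1)\log\frac{d}{d-e_0}\Bigr],
\end{equation*}
whose leading constant as $e_0\uparrow d$ is $2(\alpha+2)$, strictly larger than $\alpha+4$. So even after completing your bookkeeping, the two-regime scheme yields (for $e_0$ close to $d$) a bound exceeding~\eqref{ePSb3:1}: the gain of $\log\frac{\alpha+2}{\alpha}$ over $\log\frac{2(\alpha+2)}{\alpha}$ in your second regime is a bounded quantity and cannot compensate a linearly growing deficit $\alpha\,\frac{e_0}{d-e_0}$. (A subsidiary point: any bound on $t_1$ obtained from $f'\ge2(d-e_0)$ alone depends on $\|\DI\|_{L^2}$, which is why the second inequality must enter and why the constant inflates.) The qualitative blowup statement survives your argument, but the stated estimate does not, so the theorem as formulated is not proved.

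The idea you are missing is that the energy never needs to become nonpositive. What must be positive at the restart time is $\eta=(-e(\tau))f(\tau)^{-\frac{\alpha+2}{2}}+\frac{-\gamma}{2}f(\tau)^{-\frac{\alpha}{2}}$, i.e. the combination $-e(\tau)+\frac{-\gamma}{2}f(\tau)$, and the paper makes it positive through the growth of the \emph{mass}, not the decay of the energy: since $\frac12 f'=-\Neh_\gamma(u)\ge d-e_0$, one has $f(\tau)\ge 2(d-e_0)\tau$, so the choice $\tau=\frac{(\alpha+4)e_0}{-\gamma\alpha(d-e_0)}$ gives $-\gamma\alpha f(\tau)\ge 2(\alpha+4)e_0\ge 2(\alpha+4)e(\tau)$. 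This single inequality does two jobs: it makes $\eta>0$, so the argument of Theorem~\ref{eNLH2} applies to $v(t)=u(t+\tau)$, and it is exactly the ratio bound~\eqref{fPSb10} that turns the integrated inequality~\eqref{fHeat15b3} into $\Sma\le\frac{1}{-\gamma\alpha}\log\frac{2(\alpha+2)}{\alpha}$. That is where the constant $\alpha+4$ comes from, and it is why the paper's bound holds with constants independent of $\|\DI\|_{L^2}$, while any route through ``wait until $e\le0$'' fights the degenerate factor $1/f$ in the dissipation inequality and loses the constant.
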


\begin{proof}
The key observation is that 
\begin{equation} \label{fPSb3}
\Neh _\gamma ( u (t)) \le - (\Ene_\gamma (\Gst _\gamma ) - \Ene _\gamma ( \DI )) <0
\end{equation}
for $0\le t<\Tma$. Indeed, it follows from~\eqref{fHeat6} that $\Ene _\gamma  (u(t) ) \le  \Ene _\gamma ( \DI )< \Ene_\gamma (\Gst _\gamma )$. Therefore, Proposition~\ref{ePSb1}~\eqref{ePSb1:3} implies that~\eqref{fPSb3} holds as long as $\Neh _\gamma ( u (t)) <0$. Since the right-hand side of~\eqref{fPSb3} is a negative constant, we see by continuity and Proposition~\ref{ePSb1}~\eqref{ePSb1:2} that $\Neh _\gamma ( u (t)) $ must remain negative; and so~\eqref{fPSb3} holds for all $0\le t<\Tma$.

If $\Ene_\gamma  (\DI )\le  0$, then the result follows from Remark~\ref{eNLH3}~\eqref{eNLH3:1} and~\eqref{eNLH3:3}, so we suppose
\begin{equation}  \label{ePSb3:2}
0 <  \Ene _\gamma (\DI ) < \Ene _\gamma ( \Gst _\gamma  ).
\end{equation}
We use the notation~\eqref{fHeat11}  introduced in the proof of Theorem~\ref{eNLH2}.
It follows from~\eqref{fHeat5} and~\eqref{fPSb3} that $\frac {df} {dt} \ge 2 (\Ene_\gamma (\Gst _\gamma ) - \Ene _\gamma ( \DI )) $,  so that
\begin{equation} \label{fPSb4}
f (t) \ge 2 (\Ene_\gamma (\Gst _\gamma ) - \Ene _\gamma ( \DI )) t  .
\end{equation}
In particular, we see that
\begin{equation} \label{fPSb5}
\sigma  (t) \Eqdef - e (t)     + \frac {-\gamma  } {2 } f (t) \ge - \Ene _\gamma ( \DI ) -\gamma 
 (\Ene_\gamma (\Gst _\gamma ) - \Ene _\gamma ( \DI )) t   .
\end{equation}
We set 
\begin{equation}  \label{fPSb7}
\tau = \frac {(\alpha+4 )  \Ene _\gamma ( \DI )} {-\gamma \alpha ( \Ene_\gamma (\Gst _\gamma ) - \Ene _\gamma ( \DI ) )} >   \frac { \Ene _\gamma ( \DI )} {- \gamma ( \Ene_\gamma (\Gst _\gamma ) - \Ene _\gamma ( \DI ) )}.
\end{equation}
If $\Tma\leq \tau $ then~\eqref{ePSb3:1} follows from~\eqref{fPSb7}. We now suppose
\begin{equation}  \label{fPSb6}
\Tma >   \tau .
\end{equation}
Setting $\DIb = u(\tau )$, we see that the solution $v$ of~\eqref{NLH} with the initial condition $v(0) = \DIb$ is  $v(t)= u(t+\tau )$ and that its maximal existence time $\Sma$ is  $\Sma = \Tma -\tau $.
Since $\sigma  (\tau )>0$ by~\eqref{fPSb5} and~\eqref{fPSb7}  we can argue as in the proof of Theorem~\ref{eNLH2}
(note that $\eta>0$,  where $\eta $ is given by~\eqref{fHeat15} with $\DI$ replaced by $\DIb$), and we deduce (cf.~\eqref{fHeat15b3}) that
\begin{equation}  \label{fPSb8}
 \frac {2(\alpha +2) (- \Ene_\gamma   ( \DIb) )   - \gamma \alpha \| \DIb \| _{ L^2 }^2 } { 2(\alpha +2) (- \Ene _\gamma  ( \DIb ) )   - \gamma (\alpha +2)     \| \DIb  \| _{ L^2 }^2 } \le e^{ \gamma \alpha \Sma  } .
\end{equation}
(Observe that $\sigma (\tau )>0$, so that the denominator on the left-hand side of~\eqref{fPSb8} is positive.)
Note that by~\eqref{fPSb4}, \eqref{fPSb7}, and the fact that $e (t) $ is nonincreasing
\begin{equation*}
\| \DIb \| _{ L^2 }^2 =  \| u(\tau) \| _{ L^2 }^2 \ge 2  (\Ene_\gamma (\Gst _\gamma ) - \Ene _\gamma ( \DI )) \tau    \ge \frac { 2(\alpha +4) } {-\gamma \alpha } \Ene _\gamma ( \DI)  \ge \frac { 2(\alpha +4) } {-\gamma \alpha } \Ene _\gamma ( \DIb) 
\end{equation*}
from which it follows that
\begin{equation} \label{fPSb10}
 \frac {2(\alpha +2) (- \Ene_\gamma   ( \DIb) )   - \gamma \alpha \| \DIb \| _{ L^2 }^2 } { 2(\alpha +2) (- \Ene _\gamma  ( \DIb ) )   - \gamma (\alpha +2)     \| \DIb  \| _{ L^2 }^2 } \ge \frac {\alpha } {2(\alpha +2)} .
\end{equation}
\eqref{fPSb8} and~\eqref{fPSb10} yield $\Sma \le \frac {1} {\alpha  } \log  \Bigl( \frac {2(\alpha +2)} {\alpha } \Bigr)$.
Since $\Tma= \tau +\Sma$, the result follows by applying~\eqref{fPSb7}.
\end{proof}

\begin{rem}  \label{ePSb4}
Proposition~\ref{ePSb3} applies to solutions for which the maximal existence time is arbitrary large.
Indeed, given $\varepsilon >0$, let $\DI ^\varepsilon = (1+\varepsilon ) \Gst _\gamma $ and $u^\varepsilon $
the corresponding solution of~\eqref{NLH}. It is straightforward to verify that for all $\varepsilon >0$,
 $\Ene _\gamma (\DI ^\varepsilon)< \Ene _\gamma (\Gst _\gamma )$ and $\Neh _\gamma (\DI ^\varepsilon)<0$. Indeed, the function $\varepsilon\mapsto \Ene _\gamma ((1+\varepsilon ) \Gst _\gamma )$
 is decreasing on $[1,+\infty)$, $\Neh _\gamma (\DI ^\varepsilon)<(1+\varepsilon)^2 \Neh_\gamma (\Gst _\gamma )$ and $ \Neh_\gamma (\Gst _\gamma )=0$. 
  Hence $\DI^\varepsilon $ satisfies the assumptions of Proposition~\ref{ePSb3}.
On the other hand, $\Gst _\gamma $ is a stationary (hence global) solution of~\eqref{NLH}, so that the blowup
time of $v^\varepsilon $ goes to infinity as $\varepsilon \downarrow 0$, by continuous dependence.
\end{rem}

\subsection{The case $\gamma >0$} \label{ssNLHGpos} 
Levine's method  (Section~\ref{ssNLHGneg}) does not immediately apply  when  
$\gamma >0$, but it can easily be adapted, after a suitable change of variable.

\begin{thm} \label{eNLH4}
Suppose $\gamma > 0$. Let $\DI \in \Cz \cap H^1 (\R^N ) $ and let $u$ be the corresponding solution of~\eqref{NLH} defined on the maximal interval $[0, \Tma)$.
If $\Ene (\DI) <0$, where $\Ene $ is defined by~\eqref{fDE1},  then $u$ blows up in finite time, i.e., $\Tma <\infty $.
Moreover,
\begin{equation} \label{eNLH4:1}
\Tma \le \frac {1} {\alpha \gamma } \log  \Bigl( 1+  \frac { \gamma  \| \DI \| _{ L^2 }^2} { (\alpha +2)    (- \Ene (\DI ))  } \Bigr) <\infty .
\end{equation}
\end{thm}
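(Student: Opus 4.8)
The plan is to adapt Levine's concavity argument (Theorems~\ref{eNLH1}--\ref{eNLH2}). The obstruction when $\gamma>0$ is the sign of the zero-order term in~\eqref{fHeat9}, an artifact of using $\Ene_\gamma$; the remedy is suggested by the change of variable $v(t)=e^{-\gamma t}u(t)$, which turns~\eqref{NLH} into $v_t=\Delta v+e^{\gamma\alpha t}|v|^\alpha v$ and so absorbs the linear term into the nonlinearity. Accordingly I would rerun the argument with the \emph{plain} energy $\Ene$ in place of $\Ene_\gamma$. Working directly with $u$, set $f(t)=\|u(t)\|_{L^2}^2$ and $e(t)=\Ene(u(t))$.

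From~\eqref{fGGLs} with $\theta=0$ one has $f'=2\gamma f-2\Neh(u)$, and since $\Neh(w)\le(\alpha+2)\Ene(w)$ by~\eqref{fDI1}--\eqref{fDE1},
\begin{equation*}
f'\ge 2\gamma f+2(\alpha+2)(-e).
\end{equation*}
From~\eqref{fGGLq} with $\theta=0$ the quantity $q:=-\Ene_\gamma(u)=-e+\tfrac{\gamma}{2}f$ satisfies $q'=\|u_t\|_{L^2}^2\ge0$. The heart of the argument is the Cauchy--Schwarz bound
\begin{equation*}
f\,q'=\|u\|_{L^2}^2\,\|u_t\|_{L^2}^2\ge\Bigl(\re\int_{\R^N}\overline u\,u_t\Bigr)^2=\tfrac14 (f')^2 .
\end{equation*}
Inserting the lower bound for $f'$ into one of the two factors $\tfrac12 f'$ on the right, and re-expressing $\gamma f+(\alpha+2)(-e)$ through $q$ and $f$, the $\gamma$-terms telescope and $-\Ene(u(t))\,f(t)^{-\frac{\alpha+2}{2}}$ is seen to be nondecreasing. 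Hence
\begin{equation*}
-e(t)\ge \eta\, f(t)^{\frac{\alpha+2}{2}},\qquad \eta:=(-\Ene(\DI))\,\|\DI\|_{L^2}^{-(\alpha+2)}>0,
\end{equation*}
so $e(t)<0$ throughout and $f$ is strictly increasing.

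Combining the first and last displays gives the Bernoulli-type inequality
\begin{equation*}
f'\ge 2\gamma f+2(\alpha+2)\eta\, f^{\frac{\alpha+2}{2}},
\end{equation*}
which is exactly the inequality underlying~\eqref{fHeat15b2}, now with $\gamma>0$. As there, the substitution $g=e^{-2\gamma t}f$ turns it into $\frac{d}{dt}\bigl(g^{-\frac{\alpha}{2}}\bigr)\le-\alpha(\alpha+2)\eta\,e^{\alpha\gamma t}$; integrating from $0$ to $t$ and using $g^{-\alpha/2}>0$ forces
\begin{equation*}
e^{\alpha\gamma t}\le 1+\frac{\gamma\,\|\DI\|_{L^2}^2}{(\alpha+2)(-\Ene(\DI))}\qquad(0\le t<\Tma),
\end{equation*}
which simultaneously shows $\Tma<\infty$ and yields~\eqref{eNLH4:1}. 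The computation is routine once the right quantity is in hand; the genuinely delicate point is to recognize that for $\gamma>0$ it is the rescaled \emph{plain} energy $-\Ene(u(t))\,f^{-\frac{\alpha+2}{2}}$ that is monotone --- this is what makes the zero-order term cooperate rather than obstruct --- and then to carry the weight $e^{-2\gamma t}$ through the final integration so as to produce the sharp logarithmic bound in place of the cruder estimate $\Tma\le\|\DI\|_{L^2}^2/[\alpha(\alpha+2)(-\Ene(\DI))]$.
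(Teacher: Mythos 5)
Your argument is, in substance, the paper's own proof transported back to the unrescaled variables. The paper performs the change of variables $v(t)=e^{-\gamma t}u(t)$ explicitly and works with $\widetilde f=\|v\|_{L^2}^2$ and $\widetilde e=\tfrac12\|\nabla v\|_{L^2}^2-\tfrac{e^{\alpha\gamma t}}{\alpha+2}\|v\|_{L^{\alpha+2}}^{\alpha+2}$; since $\widetilde f=e^{-2\gamma t}f$ and $\widetilde e=e^{-2\gamma t}\Ene(u)$, your monotone quantity $(-\Ene(u(t)))\,f(t)^{-\frac{\alpha+2}{2}}$ coincides exactly with the paper's $e^{-\alpha\gamma t}(-\widetilde e\,)\widetilde f^{\,-\frac{\alpha+2}{2}}$, your Bernoulli-type inequality is the paper's, and your final substitution $g=e^{-2\gamma t}f$ simply reconstructs $\widetilde f$. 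So this is the same route, and the computations you outline (Cauchy--Schwarz, telescoping of the $\gamma$-terms, the final integration giving~\eqref{eNLH4:1}) are all correct.

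There is, however, one genuine gap in the logical ordering. The step ``inserting the lower bound for $f'$ into one of the two factors $\tfrac12 f'$'' replaces one factor $\tfrac12 f'$ by the smaller quantity $\gamma f+(\alpha+2)(-e)$; this preserves the inequality only when the remaining factor is nonnegative, i.e. you must already know $f'\ge 0$ (equivalently $\gamma f+(\alpha+2)(-e)\ge 0$, so essentially $\Ene(u(t))\le 0$) at the time under consideration. But in your write-up the negativity of $e$ and the monotonicity of $f$ appear only \emph{after} this step, as consequences of the monotonicity of $(-e)f^{-\frac{\alpha+2}{2}}$ --- which is circular. The paper avoids this by first establishing, independently of any Cauchy--Schwarz argument, that the energy stays negative: its differential inequality~\eqref{eNLH4:5} integrates to $\widetilde e(t)\le e^{\alpha\gamma t}\Ene(\DI)<0$ (estimate~\eqref{eNLH4:6}), which in your variables reads
\begin{equation*}
\Ene(u(t))\le e^{(\alpha+2)\gamma t}\,\Ene(\DI)<0 ,
\end{equation*}
and is a consequence of the identity
\begin{equation*}
\frac{d}{dt}\Bigl[e^{-(\alpha+2)\gamma t}\,\Ene(u(t))\Bigr]
=-e^{-(\alpha+2)\gamma t}\Bigl[\int_{\R^N}|u_t-\gamma u|^2+\frac{\alpha\gamma}{2}\int_{\R^N}|\nabla u|^2\Bigr]\le 0 ,
\end{equation*}
obtained by combining~\eqref{fGGLqbu} and~\eqref{fGGLs} with $\theta=0$. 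Once this is in hand, $f'>0$ is available everywhere and your Cauchy--Schwarz step is legitimate. Alternatively, a short continuity/bootstrap argument repairs your version as written: on the maximal subinterval of $[0,\Tma)$ where $\Ene(u)<0$, all your inequalities are valid and give $-e\ge\eta f^{\frac{\alpha+2}{2}}>0$ up to and including its right endpoint, so that subinterval must be all of $[0,\Tma)$. Either patch is one line, but without it your proof uses the sign condition that it only later derives.
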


\begin{proof}
We set $v(t)= e^{- \gamma t} u(t)$, so that
\begin{equation} \label{eNLH4:2}
\begin{cases}
 v_t=  \Delta v+ e^{\alpha \gamma t} |v|^\alpha v  \\
v(0)= \DI
\end{cases}
\end{equation}
and we use the arguments in the proof of Theorem~\ref{eNLH2}.
Setting
\begin{equation} \label{fNot1} 
 \widetilde{f}  =  \| v  \| _{ L^2 }^2 , \quad   
 \widetilde{\jmath} =   \| \nabla v \| _{ L^2 }^2 - e^{\alpha \gamma t}  \| v \| _{ L^{\alpha +2} }^{\alpha +2}  ,    \quad  
  \widetilde{e} = \frac {1} {2}  \| \nabla v \| _{ L^2 }^2 -\frac {e^{\alpha \gamma t}} {\alpha +2}  \| v \| _{ L^{\alpha +2} }^{\alpha +2} 
\end{equation}
it follows from~\eqref{eNLH4:2} that
\begin{equation} \label{eNLH4:3}
\int  _{ \R^N  }  \overline{v}v_t= -  \widetilde{\jmath} (t)
\end{equation}
and
$\frac {d  \widetilde{e} } {dt}   =  -    \int  _{ \R^N  } |v_t|^2  + \alpha \gamma  \widetilde{e}  -\frac {\alpha \gamma } {2} \int  _{ \R^N  } |\nabla u|^2$, so that
\begin{equation} \label{eNLH4:5}
\frac {d  \widetilde{e} } {dt}   -\alpha \gamma  \widetilde{e}  \le   -    \int  _{ \R^N  } |v_t|^2.
\end{equation}
In particular, 
\begin{equation}  \label{eNLH4:6}
 \widetilde{e} (t) \le e^{\alpha \gamma t}  \widetilde{e} (0)= e^{\alpha \gamma t} \Ene (\DI) <0 . 
\end{equation} Applying~\eqref{eNLH4:5}, Cauchy--Schwarz's inequality, and~\eqref{eNLH4:3}, we obtain
\begin{equation}  \label{eNLH4:7}
-  \widetilde{f}    \Bigl(    \frac {d  \widetilde{e} } {dt} -\alpha \gamma  \widetilde{e}   \Bigr) \ge    \int  |v|^2 \int  |v_t|^2 \ge    \Bigl| \int  \overline{v}v_t  \Bigr|^2   =      \widetilde{\jmath }  ^2  = \frac {1} {2}   (-  \widetilde{\jmath }   ) \frac {d \widetilde{f} } {dt} .
\end{equation}
Note that
\begin{equation}  \label{eNLH4:8}
 \widetilde{\jmath }  = (\alpha +2)  \widetilde{e}  -\frac {\alpha } {2} \int_{ \R^N  } |\nabla v|^2 \le  (\alpha +2)  \widetilde{e} .
\end{equation}
Since $\frac {d  \widetilde{f}} {dt} >0$ by~\eqref{eNLH4:3}, \eqref{eNLH4:8}  and~\eqref{eNLH4:6}, we deduce from~\eqref{eNLH4:7} and~\eqref{eNLH4:8} that
$-  \widetilde{f}   (  \frac {d \widetilde{e}} {dt} -\alpha \gamma  \widetilde{e}  )\ge -\frac {\alpha +2} {2}  \widetilde{e}     \frac {d  \widetilde{f}} {dt} $.
Therefore $\frac {d} {dt} [e^{- \alpha \gamma t} (-  \widetilde{e})  \widetilde{f}^{-\frac {\alpha +2} {2}}]\ge 0$, 
and so
\begin{equation*}
e^{- \alpha \gamma t} (- \widetilde{e} (t) ) \ge [- \widetilde{e} (0)] \widetilde{f}(0)^{-\frac {\alpha +2} {2}} \widetilde{f}(t)^{\frac {\alpha +2} {2}} = (- \Ene  (\DI) ) \|\DI\| _{ L^2 } ^{-(\alpha +2)}  \widetilde{f} (t)^{\frac {\alpha +2} {2}}.
\end{equation*}
Thus we see that
\begin{equation*}
\begin{split}
\frac {d \widetilde{f} } {dt} = - 2     \widetilde{\jmath }   \ge -2(\alpha +2)   \widetilde{e}   \ge 2(\alpha +2)   (- \Ene  (\DI) ) \|\DI\| _{ L^2 } ^{-( \alpha +2) } \widetilde{f}^{\frac {\alpha +2} {2}} e^{ \alpha \gamma t}.
\end{split}
\end{equation*}
This shows that $  \alpha  (\alpha +2)   (- \Ene (\DI) ) \|\DI\| _{ L^2 } ^{-( \alpha +2) }
 e^{  \alpha \gamma t }  +  \frac {d} {dt} ( \widetilde{f}^{- \frac {\alpha } {2}} ) \le 0$, and~\eqref{eNLH4:1} easily follows.
\end{proof}

\begin{rem}  \label{eNLH5} 
Below are a few comments on Theorem~\ref{eNLH4}.
\begin{enumerate}[{\rm (i)}] 

\item \label{eNLH5:1} Kaplan's calculations at the beginning of Section~\ref{sHeat} show that if $\gamma >0$, every nonnegative, nonzero initial value produces finite-time blowup. On the other hand, in dimension $N\ge 2$, there exist stationary solutions in $\Cz$, which are global solutions of~\eqref{NLH}. Indeed, it is not  difficult to prove that for every $\eta>0$, the solution $u$ of the ODE $u'' + \frac {N-1} {r} u' + \gamma u +  |u|^\alpha u=0$ with the initial conditions $u(0)= \eta$ and $u'(0) =0$ oscillates indefinitely and converges to $0$ as $r\to \infty $. This yields a solution $u\in C^2 (\R^N ) \cap \Cz$ of the equation $\Delta u+\gamma u+ |u|^\alpha u=0$, hence a stationary solution of~\eqref{NLH}. Whether or not there exist stationary solutions in $H^1 (\R^N ) \cap \Cz$ seems to be an open problem in general. Note also that in dimension $N=1$, there is no stationary solution in $\Cz$, this can be easily deduced from the resulting ODE. 

\item \label{eNLH5:2} In the case $\gamma =0$, $\alpha =\frac {2} {N}$ is the Fujita critical exponent. If $\alpha >\frac {2} {N}$, then small initial values in an appropriate sense give rise to global solutions of~\eqref{NLH}. On the other hand, if $\alpha \le \frac {2} {N}$, then every nonnegative, nonzero initial value produces finite-time blowup. (See~\cite{Fujita, Hayakawa,  KobayashiST, Weissler, Kavian}.) However, given any $\alpha \le \frac {2} {N}$, there exist nonzero initial values producing global solutions. 
In the one-dimensional case, they can be initial values that change sign sufficiently many times and are sufficiently small~\cite{MizoguchiY1, MizoguchiY2}. In any dimension, they can be self-similar solutions~\cite[Theorem~3]{HarauxW}.
If $\gamma >0$, then equation~\eqref{NLH} is not scaling-invariant, so that one cannot expect self-similar solutions.

\end{enumerate} 
\end{rem} 

\section{The nonlinear Schr\"o\-din\-ger  equation} \label{sNLS} 

In this section, we consider the nonlinear Schr\"o\-din\-ger equation~\eqref{NLS}. We assume $\alpha <\frac {4} {N-2}$, and it follows from Proposition~\ref{eLocGL2} that the Cauchy problem is locally well-posed in $H^1 (\R^N ) $. 
In contrast with the nonlinear heat equation, for which blowup may occur no matter how small $\alpha $ is, blowup  for~\eqref{NLS} cannot occur if $\alpha $ is too small.  

\begin{prop} \label{eNLS1} 
Suppose $0 <\alpha <\frac {4} {N}$ and let $\gamma \in \R$. It follows that for every $\DI \in H^1 (\R^N ) $, the  corresponding solution of~\eqref{NLS} is global, i.e. $\Tma =\infty $. 
\end{prop}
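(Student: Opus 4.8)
The plan is to derive an a priori bound on $\| u(t) \|_{H^1}$ on every bounded time interval, which by the blowup alternative of Proposition~\ref{eLocGL2} forces $\Tma = \infty$. I would argue by contradiction: suppose $\Tma < \infty$, and show that $\sup_{0 \le t < \Tma} \| u(t) \|_{H^1} < \infty$, contradicting the fact that $\| u(t) \|_{H^1} \to \infty$ as $t \uparrow \Tma$.

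The $L^2$ norm is immediate from the mass identity~\eqref{fSL1}, which integrates to $\| u(t) \|_{L^2}^2 = e^{2\gamma t} \| \DI \|_{L^2}^2$; since $\Tma < \infty$, this is bounded by a constant $M_\ast$ on $[0, \Tma)$. To control the gradient, the decisive tool is the Gagliardo--Nirenberg inequality
\[
\int_{\R^N} |w|^{\alpha +2} \le C \, \| \nabla w \|_{L^2}^{\frac{N\alpha}{2}} \, \| w \|_{L^2}^{\alpha +2 - \frac{N\alpha}{2}},
\]
in which the gradient exponent $\frac{N\alpha}{4}$ is strictly less than $1$ precisely because $\alpha < \frac{4}{N}$. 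Inserting this into the definition~\eqref{fDE1} of $\Ene$ and absorbing the nonlinear term by Young's inequality yields a coercivity estimate of the form $\| \nabla u(t) \|_{L^2}^2 \le 4\, \Ene (u(t)) + K$, where $K$ depends only on $M_\ast$; in particular $\Ene (u(t))$ is bounded below on $[0, \Tma)$.

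It then remains to bound $\Ene(u(t))$ from above. From the energy identity~\eqref{fSL2}, $\frac{d}{dt} \Ene (u(t)) = \gamma \Neh (u(t))$, and the same Gagliardo--Nirenberg and Young estimates give a bound of the form $|\Neh (u(t))| \le C\bigl(|\Ene(u(t))| + 1\bigr)$ on $[0, \Tma)$. A Gronwall argument---carried out with $|\gamma|$, so that the sign of $\gamma$ plays no role---then produces a finite bound on $\Ene (u(t))$ for $t \in [0, \Tma)$. Combined with the coercivity estimate and the mass bound, this gives $\sup_{0 \le t < \Tma} \| u(t) \|_{H^1} < \infty$, the desired contradiction.

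The main obstacle is the coercivity step, and it is exactly where the hypothesis $\alpha < \frac{4}{N}$ enters: it is this condition that makes the gradient exponent in Gagliardo--Nirenberg subcritical, allowing the focusing nonlinearity to be absorbed into the kinetic energy. When $\alpha \ge \frac{4}{N}$ this mechanism breaks down and the conclusion fails in general, so the content of the proposition lies in this absorption rather than in the (routine) Gronwall and mass estimates.
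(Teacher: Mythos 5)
Your proof is correct, but it takes a different route from the paper's. The paper's proof is two lines: since $\alpha<\frac{4}{N}$, Proposition~\ref{eLocGL2} provides a blowup alternative at the level of the $L^2$ norm (if $\Tma<\infty$ then $\|u(t)\|_{L^2}\to\infty$), and the mass identity~\eqref{fSL1} gives $\|u(t)\|_{L^2}=e^{\gamma t}\|\DI\|_{L^2}$, which is bounded on any finite interval; this immediately forces $\Tma=\infty$. You instead use only the weaker, standard $H^1$ blowup alternative and do the analytic work yourself: Gagliardo--Nirenberg with gradient power $\frac{N\alpha}{2}<2$, absorption by Young's inequality to get coercivity of $\Ene$ relative to the kinetic energy, and then a Gronwall argument on $\Ene(u(t))$ via the energy identity~\eqref{fSL2} (which is legitimate here, since $\alpha<\frac{4}{N}$ implies $(N-2)\alpha<4$, and the sign of $\gamma$ indeed plays no role once you work with $|\gamma|$). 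What the paper's approach buys is brevity, at the cost of invoking the sharper $L^2$-level alternative, which is special to the $L^2$-subcritical case and whose proof (in the cited references) rests on exactly the scaling/Gagliardo--Nirenberg mechanism you make explicit; what your approach buys is self-containedness, since it reduces the external input to the generic $H^1$ alternative valid for all $(N-2)\alpha<4$, and it exhibits concretely where $\alpha<\frac{4}{N}$ enters. One cosmetic slip: in your displayed Gagliardo--Nirenberg inequality the gradient exponent is $\frac{N\alpha}{2}$, so the quantity that must be less than $1$ is $\frac{N\alpha}{4}$ as the power of $\|\nabla w\|_{L^2}^2$, not the exponent itself; your subsequent use is consistent with this, so the argument stands.
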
 

\begin{proof} 
Let $\DI\in H^1 (\R^N ) $ and $u$ the corresponding solution of~\eqref{NLS} defined on the maximal interval $[0, \Tma)$. 
Formula~\eqref{fSL1} yields 
\begin{equation} \label{eNLS1:1} 
\| u(t) \| _{ L^2 } = e^{\gamma t}  \| \DI \| _{ L^2 } 
\end{equation} 
for all $0\le t<\Tma$. 
Applying the blowup alternative on the $L^2$ norm of Proposition~\ref{eLocGL2}, we conclude that $\Tma =\infty $. 
\end{proof} 

When $\alpha \ge \frac {4} {N}$, finite-time blowup may occur. This was proved in~\cite{Zakharov} in the three-dimensional cubic case with $\gamma =0$, then in~\cite{Glassey} in the general case (still with $\gamma =0$). 
Note that all solutions have locally bounded $L^2 $-norm by~\eqref{eNLS1:1}, so that Levine's method used in Section~\ref{sHeat} cannot be applied. 
Instead, the proof in~\cite{Zakharov, Glassey} is based on the variance identity~\eqref{fSL3}-\eqref{fSL4}. 
This argument can easily be applied to the case $\gamma \ge 0$, which we consider first.

\subsection{The case $\gamma \ge 0$} \label{sNLSGpos} 
The following result is proved in~\cite{Zakharov, Glassey} when $\gamma =0$.

\begin{thm} \label{eNLS2} 
Suppose $\frac {4} {N} \le \alpha < \frac {4} {N-2}$ and $\gamma \ge 0$. 
Let $\DI\in H^1 (\R^N ) $ and $u$ the corresponding solution of~\eqref{NLS}  defined on the maximal interval $[0, \Tma)$. If $\Ene (\DI) <0$ and $\DI \in   L^2 (\R^N ,  |x|^2dx)$, where $\Ene $ is defined by~\eqref{fDE1},  then $u$ blows up in finite time, i.e., $\Tma <\infty $.
\end{thm}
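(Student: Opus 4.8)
The plan is to use the variance identity~\eqref{fSL3}--\eqref{fSL4} to control the quantity $\Var(u(t))$, and to show that the convexity of this quantity, together with the sign of the energy, forces $\Var(u(t))$ to reach zero in finite time, which contradicts the global existence of a nontrivial solution. Since $\DI\in L^2(\R^N,|x|^2dx)$, the results recalled in Section~\ref{sCauchy} guarantee that $t\mapsto\Var(u(t))$ is $C^2$ on $[0,\Tma)$ and that the variance identities hold.

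First I would compute a closed differential inequality for $\Var(u(t))$. Starting from~\eqref{fSL4}, I observe that the right-hand side can be rewritten in terms of the energy $\Ene$ defined in~\eqref{fDE1}. Indeed, $-2\int|\nabla u|^2+\frac{N\alpha}{\alpha+2}\int|u|^{\alpha+2}$ equals $-4\Ene(u)+(2-\frac{N\alpha}{2})\cdot\frac{2}{\alpha+2}\int|u|^{\alpha+2}$ after collecting terms, so that in the mass-(super)critical regime $\alpha\ge\frac4N$ the leftover coefficient of the nonlinear term is nonpositive. Hence
\begin{equation*}
\frac{d}{dt}\Mom(u(t))\le -4\Ene(u(t))+2\gamma\Mom(u(t)).
\end{equation*}
The key point here is the algebraic identity that turns the gradient and nonlinear terms into $-4\Ene$ plus a remainder of favourable sign; this is where the hypothesis $\alpha\ge\frac4N$ is used.

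Next I would track how $\Ene(u(t))$ evolves. By~\eqref{fSL2}, $\frac{d}{dt}\Ene(u(t))=\gamma\Neh(u(t))$, and I would need to keep the energy negative (or bound it above by a negative quantity) along the flow. When $\gamma=0$ the energy is conserved and stays equal to $\Ene(\DI)<0$. For $\gamma>0$ the energy is not conserved, so the adaptation is to incorporate the exponential growth $\|u(t)\|_{L^2}^2=e^{2\gamma t}\|\DI\|_{L^2}^2$ coming from~\eqref{fSL1} and to renormalize, much as in the proof of Theorem~\ref{eNLH4}: set $v(t)=e^{-\gamma t}u(t)$, or equivalently introduce weighted variance and momentum quantities so that the $2\gamma\Var$ and $2\gamma\Mom$ terms in~\eqref{fSL3}--\eqref{fSL4} are absorbed. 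The right formulation should produce, for a suitable modified variance $\widetilde V(t)$ (for instance $\widetilde V(t)=e^{-2\gamma t}\Var(u(t))$ together with the correct energy weight), a clean convexity inequality of the form $\frac{d^2}{dt^2}\widetilde V(t)\le 8\,\widetilde e(t)$ with $\widetilde e(t)\le e^{c\gamma t}\Ene(\DI)<0$, paralleling~\eqref{eNLH4:6}.

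Finally, from such a differential inequality I would conclude blowup: the modified variance $\widetilde V$ is a nonnegative $C^2$ function whose second derivative is bounded above by a strictly negative, integrable-against-time quantity, so a double integration shows $\widetilde V$ must become negative at some finite $t$, which is impossible since $\widetilde V\ge 0$. This contradiction bounds $\Tma$. I expect the main obstacle to be the $\gamma>0$ case: handling the nonautonomous terms $2\gamma\Var$ and $2\gamma\Mom$ requires choosing exactly the right change of variable or weighted functionals so that the energy remains controllably negative and the convexity argument still closes. The mass-critical threshold $\alpha=\frac4N$ is delicate as well, since there the favourable remainder term vanishes and one must rely solely on $-4\Ene(u)$; verifying that the estimate is still strict enough to force finite-time collapse is the technical heart of the argument.
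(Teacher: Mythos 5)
Your overall strategy is the paper's own: pass to the weighted variance $e^{-2\gamma t}\Var(u(t))$, bound the energy above by a negative quantity, and conclude by double integration against the constraint $\Var\ge 0$. However, your key algebraic step is wrong, and wrong in the direction that breaks the argument. Collecting terms correctly,
\begin{equation*}
-2\int_{\R^N}|\nabla u|^2+\frac{N\alpha}{\alpha+2}\int_{\R^N}|u|^{\alpha+2}
=-4\Ene(u)+\frac{N\alpha-4}{\alpha+2}\int_{\R^N}|u|^{\alpha+2},
\end{equation*}
so for $\alpha\ge\frac{4}{N}$ the leftover coefficient is \emph{nonnegative}, not nonpositive (your identity, with coefficient $\frac{4-N\alpha}{\alpha+2}$, is an identity only when $N\alpha=4$). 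Consequently \eqref{fSL4} yields the \emph{lower} bound $\frac{d}{dt}\Mom(u(t))\ge -4\Ene(u(t))+2\gamma\Mom(u(t))$, not the upper bound you state. This is precisely the direction the proof needs: by \eqref{fSL3}, $\frac{d}{dt}\bigl(e^{-2\gamma t}\Var(u(t))\bigr)=-4e^{-2\gamma t}\Mom(u(t))$, so the second derivative of the weighted variance is $-4\frac{d}{dt}\bigl(e^{-2\gamma t}\Mom(u(t))\bigr)$, and only a lower bound on $\frac{d}{dt}\bigl(e^{-2\gamma t}\Mom\bigr)$ converts into an upper bound on $\frac{d^2}{dt^2}\bigl(e^{-2\gamma t}\Var\bigr)$. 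From your inequality one gets only $\frac{d^2}{dt^2}\bigl(e^{-2\gamma t}\Var\bigr)\ge 16e^{-2\gamma t}\Ene(u)$, from which nothing follows; as written, the proposal cannot reach the convexity inequality $\frac{d^2}{dt^2}\widetilde V\le 8\,\widetilde e(t)<0$ that you invoke at the end.

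Once the sign is corrected, your outline coincides with the paper's proof, and the step you left vague (keeping the energy negative when $\gamma>0$) is handled there without any change of variables: since $\Neh(w)\le(\alpha+2)\Ene(w)$, identity \eqref{fSL2} and $\gamma\ge 0$ give $\frac{d}{dt}\Ene(u(t))\le\gamma(\alpha+2)\Ene(u(t))$, hence $\Ene(u(t))\le e^{\gamma(\alpha+2)t}\Ene(\DI)<0$, and therefore $\frac{d^2}{dt^2}\bigl(e^{-2\gamma t}\Var(u(t))\bigr)\le 16\,e^{\alpha\gamma t}\Ene(\DI)\le 16\,\Ene(\DI)<0$; two integrations and $\Var\ge 0$ then force $\Tma<\infty$. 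Note also that your worry about the mass-critical case $\alpha=\frac{4}{N}$ is unfounded in this argument: there the remainder term simply vanishes, and the uniformly negative bound $16\,\Ene(\DI)$ on the second derivative survives unchanged, so no extra delicacy arises at the critical exponent.
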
 

\begin{proof} 
The proof is based on a differential inequality for the variance.
More precisely, it follows from~\eqref{fSL3} that
\begin{equation}  \label{fNLS15}
\frac {d} {dt} (e^{-2\gamma t} \Var (u(t)))= -4 e^{-2\gamma t} \Mom (u) 
\end{equation} 
and from~\eqref{fSL4} that
\begin{equation}  \label{fNLS14}
\frac {d} {dt} (e^{-2\gamma t} \Mom (u(t)))  =  e^{-2\gamma t}  \Bigl[ -4 \Ene (u(t))  + \frac { N\alpha -4 } {\alpha +2}  \|u \| _{ L^{\alpha +2} }^{\alpha +2}  \Bigr]   \ge -4 e^{-2\gamma t} \Ene (u(t))
\end{equation} 
where we used the assumption $N\alpha \ge 4$ in the last inequality. 
\eqref{fNLS15} and~\eqref{fNLS14} yield
\begin{equation}  \label{fNLS16}
\frac {d^2} {dt^2} (e^{-2\gamma t} \Var (u(t)))  = -4 \frac {d} {dt}( e^{-2\gamma t} \Mom (u(t)) )
 \le  16 e^{-2\gamma t}  \Ene (u(t)) .
\end{equation} 
Since
\begin{equation*}
\Neh (w)= (\alpha +2) \Ene (w) - \frac {\alpha } {2}\int  _{ \R^N  } |\nabla u|^2 \le (\alpha +2) \Ene (w) 
\end{equation*} 
and $\gamma \ge 0$, it follows from~\eqref{fSL2} that
\begin{equation} \label{fNLS19}
\frac {d} {dt} \Ene (u(t)) \le \gamma (\alpha +2) \Ene (u(t))
\end{equation} 
so that
\begin{equation} \label{fNLS20}
\Ene (u(t)) \le e^{\gamma (\alpha +2) t} \Ene ( \DI) <0.
\end{equation} 
Applying~\eqref{fNLS16}  and~\eqref{fNLS20}  we obtain
\begin{equation} \label{fNLS21}
\frac {d^2} {dt^2} (e^{-2\gamma t} \Var (u(t)))   \le  16 e^{\alpha \gamma t} \Ene ( \DI) \le 16 \Ene ( \DI ).
\end{equation} 
Note that by~\eqref{fNLS15} 
\begin{equation} \label{fNLS21b1} 
\frac {d} {dt} (e^{-2\gamma t} \Var (u(t)))   _{ |t=0 }=  -4 \Mom ( \DI )  .
\end{equation} 
Integrating twice~\eqref{fNLS21} and applying~\eqref{fNLS21b1} yields
\begin{equation} \label{fNLS22} 
e^{-2\gamma t} \Var (u(t))  \le \Var (\DI) -4 t   \Mom ( \DI )   + 16 \Ene ( \DI )\int _0^t \int _0^s e^{\alpha \gamma \sigma } \, d\sigma ds 
\end{equation} 
for all $0\le t< \Tma$. The right-hand side of~\eqref{fNLS22}, considered as a function of $t\ge 0$,  is negative for $t$ large (because $\Ene ( \DI)<0$). Since $e^{-2\gamma t} \Var (u(t))\ge 0$, we conclude that $\Tma <\infty $.
\end{proof} 

The ``natural" condition in Theorem~\ref{eNLS2} is $\Ene ( \DI )<0$. However, we require that $\DI \in L^2 (\R^N,  |x|^2 dx) $ because we calculate the variance $\Var (u)$. 
Whether the finite variance assumption is necessary or not in Theorem~\ref{eNLS2} seems to be an open question. 
A partial answer is known in the case $\alpha =\frac {4} {N}$ and $\gamma =0$: if $\Ene ( \DI )<0$ and $ \| \DI \| _{ L^2 }$ is not too large, then $\Tma <\infty $. (See~\cite[Theorem~1.1]{MerleR4}.) 
Another partial answer is given by Ogawa and Tsutsumi~\cite{OgawaTu} for radially symmetric solutions in the case $\gamma =0$ and $N\ge 2$. The proof can be adapted to the case $\gamma \ge 0$, under the additional restriction $N\ge 3$. (The case $N=1$, $\gamma =0$ and $\alpha =4$ is considered in~\cite{OgawaTd}, but we do not study its extension to $\gamma >0$ here.)

\begin{thm} \label{eNLS3} 
Suppose $\frac {4} {N} \le \alpha < \frac {4} {N-2}$ and $\gamma \ge 0$. 
Assume further $N\ge 2$ and $\alpha \le 4$ if $\gamma =0$, and $N\ge 3$ if $\gamma >0$.
Let $\DI\in H^1 (\R^N ) $ and $u$ the corresponding solution of~\eqref{NLS}  defined on the maximal interval $[0, \Tma)$. If $\Ene (\DI) <0$, where $\Ene $ is defined by~\eqref{fDE1}, and if $\DI$ is radially symmetric,  then $u$ blows up in finite time, i.e., $\Tma <\infty $.
\end{thm}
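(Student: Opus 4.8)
The plan is to adapt the Ogawa--Tsutsumi truncated--variance argument, using the gauge substitution $v(t)=e^{-\gamma t}u(t)$ to neutralize the linear $\gamma$-term. By~\eqref{fSL1} we have $\|u(t)\|_{L^2}^2=e^{2\gamma t}\|\DI\|_{L^2}^2$, so $v$ has conserved $L^2$ norm and solves $v_t=i\Delta v+ie^{\alpha\gamma t}|v|^\alpha v$. Since the finite-variance hypothesis of Theorem~\ref{eNLS2} is dropped, I would replace the variance by a localized variance: fix a radial cutoff $a_R(x)=R^2\phi(|x|/R)$ with $\phi(r)=r^2$ for $r\le1$, $\phi$ constant for $r\ge2$, and $\phi''\le2$ throughout, and study $\mathcal W_R(t)=\int_{\R^N}a_R|v|^2=e^{-2\gamma t}\int_{\R^N}a_R|u|^2\ge0$. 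This weight $e^{-2\gamma t}$ is exactly the one used for the full variance in the proof of Theorem~\ref{eNLS2}. The goal is a differential inequality forcing $\mathcal W_R$ to become negative in finite time, which contradicts positivity unless $\Tma<\infty$. (For $\gamma=0$ this is the result of Ogawa--Tsutsumi; the new content is the adaptation to $\gamma>0$.)

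First I would compute the localized analogues of~\eqref{fSL3}--\eqref{fSL4}. The first derivative is $\mathcal W_R'=2\,\Im\int_{\R^N}\nabla a_R\cdot\overline v\,\nabla v$, with no $\gamma$-contribution because the gauge term is real; differentiating again yields the localized virial quantity $4\int\phi_R''|\partial_r v|^2-\int\Delta^2 a_R|v|^2-\frac{2\alpha}{\alpha+2}e^{\alpha\gamma t}\int\Delta a_R|v|^{\alpha+2}$, where radiality is used to reduce the Hessian contraction $\sum_{jk}\partial_{jk}a_R\,\partial_jv\overline{\partial_kv}$ to $\phi_R''|\partial_r v|^2$ (the angular part vanishes for radial $v$). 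Splitting $a_R=|x|^2+(a_R-|x|^2)$, using $\phi''\le2$ to give the gradient correction a favorable sign, and bounding $|\Delta^2 a_R|\le CR^{-2}$ on the annulus $R\le|x|\le2R$, I would obtain
\[
\mathcal W_R''(t)\le 16\,\widetilde E(t)+\frac{C}{R^2}\|\DI\|_{L^2}^2+Ce^{\alpha\gamma t}\int_{|x|\ge R}|v|^{\alpha+2},
\qquad \widetilde E(t):=e^{-2\gamma t}\Ene(u(t)).
\]
The energy identity~\eqref{fSL2}, handled exactly as in Theorem~\ref{eNLS2}, gives $\widetilde E(t)\le e^{\alpha\gamma t}\Ene(\DI)<0$, so the driving term $16\widetilde E$ is negative and of size $e^{\alpha\gamma t}$.

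The crux is controlling the exterior nonlinear error. For radially symmetric $v$ the Strauss estimate (valid for $N\ge2$) gives $\|v\|_{L^\infty(|x|\ge R)}\le CR^{-(N-1)/2}\|v\|_{L^2}^{1/2}\|\nabla v\|_{L^2}^{1/2}$, hence $\int_{|x|\ge R}|v|^{\alpha+2}\le CR^{-(N-1)\alpha/2}\|\DI\|_{L^2}^{(\alpha+4)/2}\|\nabla v\|_{L^2}^{\alpha/2}$. Because $N\ge3$ forces $\alpha<\frac{4}{N-2}\le4$, the exponent $\alpha/2$ is $<2$; moreover $N-1\ge2$ gives a stronger spatial decay rate than in the planar case. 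I would then seek to dominate this term by the negative part of the right-hand side --- either by the sign-favorable potential $-\frac{4(N\alpha-4)}{\alpha+2}e^{\alpha\gamma t}\|v\|_{\alpha+2}^{\alpha+2}$ retained from the virial (using $\widetilde E<0$ to bound the potential below by $\|\nabla v\|^2$) or directly by $16\widetilde E$ --- taking $R$ large. Once the error is subordinate, one reaches $\mathcal W_R''\le8\,\Ene(\DI)e^{\alpha\gamma t}+CR^{-2}\|\DI\|_{L^2}^2$; integrating twice, the doubly-integrated $e^{\alpha\gamma t}$ grows while the $R^{-2}$ contribution is only quadratic in $t$, so the right-hand side turns negative in finite time, contradicting $\mathcal W_R\ge0$ and proving $\Tma<\infty$.

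The main obstacle is precisely this absorption in the presence of the exponential weights. When $\gamma=0$ the $L^2$ norm and energy are conserved, the exterior error carries no growing factor, and taking $R$ large kills it outright, which is why $N\ge2$ and $\alpha\le4$ suffice there. When $\gamma>0$ both the driving term and the error grow exponentially, and the error moreover contains $\|\nabla v(t)\|^{\alpha/2}$, which is not controlled a priori for a putatively global solution; a naive Young splitting against a constant-coefficient $\|\nabla v\|^2$ produces a residual that grows \emph{faster} than $e^{\alpha\gamma t}$ and defeats the contradiction. Making the exterior term genuinely subordinate --- matching the exponential rates and exploiting the extra decay available only for $N\ge3$ --- is the delicate point, and is exactly where the hypothesis $N\ge3$, absent from the $\gamma=0$ statement, is forced.
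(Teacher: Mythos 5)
Your setup is the same as the paper's (gauge transform $v=e^{-\gamma t}u$, truncated variance, radial Strauss-type estimate for the exterior term), but there is a genuine gap exactly where you say the ``delicate point'' lies: your proposal never supplies the absorption mechanism for the exterior nonlinear error when $\gamma>0$, and the two candidates you float both fail. Absorbing into the retained virial potential $-\frac{4(N\alpha-4)}{\alpha+2}e^{\alpha\gamma t}\|v\|_{L^{\alpha+2}}^{\alpha+2}$ is impossible at the endpoint $\alpha=\frac4N$ (the coefficient vanishes), and absorbing into $16\widetilde E$ or into a constant-coefficient $\|\nabla v\|_{L^2}^2$ cannot work because, as you yourself note, $\|\nabla v(t)\|_{L^2}$ is not a priori controlled and a Young splitting then produces a residual growing faster than $e^{\alpha\gamma t}$. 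The paper resolves this with two ingredients absent from your outline. First, the Strauss estimate is applied in a \emph{weighted} form (this is Lemma~\ref{eOgau}, the Caffarelli--Kohn--Nirenberg inequality): writing $2-\GVar''=\gamma_\varepsilon^2$, one gets $\|\gamma_\varepsilon^{1/2}v\|_{L^\infty}^2\le \varepsilon^N C\|v\|_{L^2}^2+2\varepsilon^{N-1}\|v\|_{L^2}\|\gamma_\varepsilon v_r\|_{L^2}$, so the gradient factor produced by Young's inequality carries the \emph{same} cutoff weight and is cancelled exactly against the sign-favorable localized term $-2\int(2-\GVar'')|v_r|^2$ in the virial identity, leaving only the explicit residual $\kappa(\Avoir,\varepsilon)$ of \eqref{eOgau:u1}. (Note this mechanism is already needed for $\gamma=0$: your claim that conservation laws let a large fixed $R$ ``kill the error outright'' glosses over the uncontrolled $\|\nabla v\|^{\alpha/2}$ there too.)

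Second, and this is where $N\ge3$ actually enters, the cutoff scale is not fixed: for each time horizon $\tau<\Tma$ the paper takes $\Avoir_\tau=e^{\alpha\gamma\tau}$ and $\varepsilon_\tau=a\Avoir_\tau^{-\lambda}$ with $\frac{1}{2(N-1)}<\lambda<\frac12$, a window that is nonempty only when $N\ge3$. With this choice the smallness condition $\Const\Avoir_\tau\varepsilon_\tau^{2(N-1)}<1$ holds uniformly, the residual obeys $\kappa(\Avoir_\tau,\varepsilon_\tau)\le C e^{(1-\delta)\alpha\gamma\tau}$ with $\delta>0$ given by \eqref{fDdel}, and the boundary terms $\zeta(0),\zeta'(0)$, which scale like $\|\GVar_{\varepsilon_\tau}\|_{W^{1,\infty}}\sim\varepsilon_\tau^{-2}$, contribute $Ce^{2\lambda\alpha\gamma\tau}$. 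Since $\max\{2\lambda,1-\delta\}<1$, every error term grows at a strictly smaller exponential rate than the doubly integrated driving term $\frac{2N}{\alpha\gamma^2}\Ene(\DI)e^{\alpha\gamma\tau}$, and positivity of the truncated variance forces $\tau$, hence $\Tma$, to be finite (cf.~\eqref{fOT13}). Without this $\tau$-dependent matching of the truncation radius to the exponential growth, the contradiction you aim for cannot be closed; so the proposal as written establishes the (known) case $\gamma=0$ in outline but not the new case $\gamma>0$.
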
 

\begin{proof} 
The proof uses calculations similar to those in the proof of Theorem~\ref{eNLS2}, but for a truncated variance. 
It is convenient to set $v(t) = e^{-\gamma t} u(t)$, so that $v$ satisfies the equation $v_t = i (\Delta v+ e^{\alpha \gamma t}  |v|^\alpha v)$. Moreover,
\begin{equation} \label{fNLS23b1} 
 \| v(t ) \| _{ L^2 } =  \| \DI \| _{ L^2 }
\end{equation} 
by formula~\eqref{eNLS1:1}.
Let $\GVar \in C^\infty (\R^N ) \cap W^{4, \infty } (\R^N ) $ be spherically symmetric and set
\begin{equation} \label{fNLS24} 
\zeta (t)=   \int  _{ \R^N  } \GVar  |v|^2 dx.
\end{equation} 
It follows from~\eqref{fVarug1} and~\eqref{fVaruqg2}  that
\begin{equation} \label{fOT1b1} 
\frac {1} {2} \zeta '(t)   =    \Im \int _{ \R^N  }  \overline{v} ( \nabla v \cdot  \nabla  \GVar) 
\end{equation}
and
\begin{equation} \label{fOT1} 
\frac {1} {2} \zeta '' =   \int  _{ \R^N  }  \Bigl( -\frac {1} {2}  |v|^2 \Delta ^2 \GVar - \frac {\alpha e^{\alpha  \gamma t}} {\alpha +2}  |v|^{\alpha +2} \Delta \GVar + 2  |\nabla v|^2 \GVar '' \Bigr) .
\end{equation} 
Note that the calculations in Proposition~\ref{eGGLzuu} are formal in the case $\theta =\frac {\pi } {2}$. However, they are easily justified for $H^2$ solutions, and then by continuous dependence for $H^1$ solutions. 
We observe that
\begin{multline*} 
 \int  _{ \R^N  }  \Bigl( - \frac {\alpha  e^{\alpha  \gamma t}} {\alpha +2}  |v|^{\alpha +2} \Delta \GVar + 2  |\nabla v|^2 \GVar '' \Bigr) = 2N\alpha  e^{-2\gamma t} \Ene (u) - (N\alpha -4) \int  _{ \R^N  }   |\nabla v|^2 \\
+ 2 \int  _{ \R^N  } ( \GVar '' -2)  |\nabla v|^2
  + \frac {\alpha e^{\alpha \gamma t}} {\alpha +2} \int   _{ \R^N  } (2N -  \Delta \GVar )  |v|^{\alpha +2}.
\end{multline*} 
Since $N\alpha \ge 4$ and $\Ene (u(t)) \le e^{\gamma (\alpha +2) t} \Ene ( \DI) $ by~\eqref{fNLS20}, we deduce that 
\begin{multline*} 
 \int  _{ \R^N  }  \Bigl( - \frac {\alpha e^{\alpha \gamma t}} {\alpha +2}  |v|^{\alpha +2} \Delta \GVar + 2  |\nabla v|^2 \GVar '' \Bigr)
\le  2N\alpha  e^{\alpha \gamma t}  \Ene ( \DI) \\+ 2 \int  _{ \R^N  } ( \GVar '' -2)  |\nabla v|^2 
  + \frac {\alpha e^{\alpha \gamma t}} {\alpha +2} \int   _{ \R^N  } (2N -  \Delta \GVar )  |v|^{\alpha +2}
\end{multline*} 
so that \eqref{fOT1}  yields
\begin{equation} \label{fOT2} 
\begin{split} 
\frac {1} {2} & \zeta '' \le   2N\alpha  e^{\alpha \gamma t} \Ene ( \DI)\\ & + \int  _{ \R^N  }  \Bigl( -\frac {1} {2}  |v|^2 \Delta ^2 \GVar + \frac {\alpha e^{\alpha \gamma t}} {\alpha +2}  |v|^{\alpha +2} (2N - \Delta \GVar ) + 2  |\nabla v|^2 (\GVar ''-2) \Bigr)  .
\end{split} 
\end{equation} 
We first consider the case $\gamma =0$. We apply Lemma~\ref{eOgau} with $A=  \| \DI \| _{ L^2 }$,  $\Avoir = \frac {\alpha } {\alpha +2}$ and $\varepsilon >0$ sufficiently small so that $ \Const    \Avoir  \varepsilon ^ {2(N-1) } <1$ and $\kappa ( \mu , \varepsilon  ) \le -N\alpha  \Ene ( \DI)$. With $\GVar = \GVar_\varepsilon $ given by Lemma~\ref{eOgau}, it follows from~\eqref{fNLS23b1}, \eqref{fOT2} and~\eqref{eOgau:u} that $\zeta '' \le   2N\alpha  \Ene ( \DI) <0$. Since $\zeta (t)\ge 0$ for all $0\le t<\Tma$, we conclude as in the proof of Theorem~\ref{eNLS2} that $\Tma <\infty $. 

We next consider the case $\gamma >0$ and $N\ge 3$. Let $0< \tau < \Tma$.
We set 
\begin{equation} \label{fOT3} 
\Avoir _\tau  = e^{\alpha \gamma \tau  } \ge 1
\end{equation} 
we fix 
\begin{equation} \label{fOT4} 
\frac {1} {2} > \lambda > \frac {1} {2(N-1)}
\end{equation} 
(here we use $N\ge 3$) and we set 
\begin{equation}  \label{fOT5} 
\varepsilon _\tau  = a    \Avoir _\tau  ^{-\lambda } \le a  . 
\end{equation} 
Here, the constant $0<a \le 1$ is 
chosen sufficiently small so that $ \Const   a  ^ {2(N-1) } <1$, where  $\Const  $ is the constant in Lemma~\ref{eOgau}. Since $\Avoir _\tau  \ge 1$ and $1-2(N-1)\lambda <0$ by~\eqref{fOT4}, it follows in particular that $ \Const    \Avoir  _\tau \varepsilon  _\tau ^ {2(N-1) }=  \Const    \Avoir _\tau ^{1- 2(N-1) \lambda } a   ^ {2(N-1) }    \le  \Const  a   ^ {2(N-1) }   <1$. Moreover, we deduce from~\eqref{fOT4} that $\kappa $ defined by~\eqref{eOgau:u1} satisfies $\kappa (\Avoir ,\varepsilon ) \le C \Avoir _\tau ^{1- \delta }$, where $C$ is independent of $\tau  $, and 
\begin{equation} \label{fDdel} 
\delta = \alpha \min  \Bigl\{  \frac {\lambda N } {2}, \frac {2(N-1)\lambda -1} {4-\alpha } \Bigr\} >0 .
\end{equation}  
We now let $\GVar = \GVar_{\varepsilon_\tau } $ where $\GVar_\varepsilon $ is given by Lemma~\ref{eOgau} 
for this choice of  $\varepsilon $. It follows from~\eqref{eOgau:1zb}, \eqref{eOgau:u}, and the inequality $\kappa (\Avoir ,\varepsilon ) \le C \Avoir _\tau ^{1- \delta }$ that
\begin{equation} \label{fOT6} 
\begin{split} 
 - 2 \int  _{ \R^N  } (2- \GVar_{\varepsilon_\tau } '' ) | \nabla v |^2  &
 + \frac {\alpha}{\alpha+2} e^{\alpha \gamma t} \int _{ \R^N  } (2N - \Delta  \GVar _{\varepsilon_\tau }) |v |^{\alpha +2} \\ &
  -\frac {1}{2} \int _{ \R^N  } |v |^2 \Delta^2  \GVar _{\varepsilon_\tau } \le  C \Avoir _\tau ^{1- \delta }.
\end{split} 
\end{equation} 
Estimates~\eqref{fOT2} and \eqref{fOT6} yield
 \begin{equation} \label{fOT7} 
 \frac {1} {2} \zeta ''  \le  2N \alpha  e^{\alpha \gamma t}   \Ene (\DI ) + C  \Avoir _\tau ^{1- \delta } 
\end{equation} 
for all $0\le t \le \tau $. 
Integrating twice~\eqref{fOT7} and applying~\eqref{fOT1b1}, we deduce that
 \begin{equation} \label{fOT10} 
\begin{split} 
 \frac {1} {2} \zeta (\tau ) &\le \frac {1} {2}  \|\GVar_{\varepsilon_\tau } \| _{ L^\infty  }    \|\DI \| _{ L^2 }^2  + \tau   \| \nabla \GVar_{\varepsilon_\tau } \| _{ L^\infty  }  \| \DI \| _{ H^1 }^2
 \\ & +  \frac {2N} {\alpha \gamma ^2}   \Ene (\DI ) (e^{\alpha \gamma \tau }- \tau  - \alpha \gamma \tau )
+  C \Avoir _\tau ^{1-\delta } \tau   . 
\end{split} 
\end{equation} 
Using~\eqref{eOgau:1b} to estimate $\GVar$ in the above inequality, applying~\eqref{fOT3} and~\eqref{fOT5} to express $\Avoir _\tau $ and $\varepsilon _\tau $ in terms of $\tau $, and since $\zeta (\tau )\ge 0$, we obtain
 \begin{equation} \label{fOT13} 
  0 \le C e^{2\lambda \alpha \gamma \tau }    + C \tau e^{ \lambda \alpha \gamma \tau }  
  +  \frac {2N} {\alpha \gamma ^2}   \Ene (\DI ) (e^{\alpha \gamma \tau }- \tau  - \alpha \gamma \tau )
+  C  \tau  e^{(1-\delta ) \alpha \gamma \tau } . 
\end{equation} 
Since $\Ene (\DI )<0$, and $\max\{ 2\lambda , 1-\delta  \}<1$, the right-hand side of~\eqref{fOT13} is negative for $\tau $ large. Since $\tau <\Tma$ is arbitrary, we conclude that $\Tma <\infty $.
\end{proof} 

\subsection{The case $\gamma < 0$} \label{sNLSGneg} 
If $\gamma <0$, the argument used in the proof of Theorem~\ref{eNLS2} breaks down because~\eqref{fNLS19} does not hold. Yet blowup occurs when $\alpha >\frac {4} {N}$, as shows the following result of Tsutsumi~\cite{Tsutsumi}.

\begin{thm} \label{eNLS2b} 
Suppose $\frac {4} {N} < \alpha < \frac {4} {N-2}$ and $\gamma < 0$. 
Let $\DI\in H^1 (\R^N ) $ and $u$ the corresponding solution of~\eqref{NLS}  defined on the maximal interval $[0, \Tma)$. If 
\begin{equation} \label{eNLS2b:1} 
\Var ( \DI ) + \frac {N\alpha -4} {\gamma \alpha } \Mom ( \DI ) + \frac {(N\alpha -4)^2} {\gamma ^2\alpha ^2} \Ene ( \DI ) <0
\end{equation} 
where the functionals $\Ene $, $\Var $ and $\Mom $ are defined by~\eqref{fDE1}, \eqref{fVar1} and~\eqref{fSL5}, respectively, then $u$ blows up in finite time, i.e., $\Tma <\infty $.
\end{thm}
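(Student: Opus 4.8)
The plan is to build, out of the variance and energy identities, a single scalar quantity whose negativity at $t=0$ is exactly hypothesis~\eqref{eNLS2b:1} and which is then forced into a contradiction with the nonnegativity of the variance. Since $N\alpha>4$ we have $c:=\frac{N\alpha-4}{\gamma\alpha}<0$, and I would study
\begin{equation*}
F(t)=\Var(u(t))+c\,\Mom(u(t))+c^2\,\Ene(u(t)),\qquad G(t)=\Mom(u(t))+c\,\Ene(u(t)),
\end{equation*}
so that $F=\Var(u)+cG$ and hypothesis~\eqref{eNLS2b:1} is precisely $F(0)<0$. The whole point of this value of $c$ is that when one differentiates $F$ using~\eqref{fSL3}, \eqref{fSL4} and~\eqref{fSL2} (and the algebraic relation $\Neh(u)=2\Ene(u)-\frac{\alpha}{\alpha+2}\int_{\R^N}|u|^{\alpha+2}$ coming from~\eqref{fDI1}--\eqref{fDE1}), all the terms containing $\int_{\R^N}|u|^{\alpha+2}$ cancel, precisely because $c\gamma\alpha=N\alpha-4$. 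First I would carry out this computation and record the resulting closed system
\begin{gather*}
F' = 2\gamma F-4G, \qquad G'=2\gamma G-4\,\Ene(u),\\
\tfrac{d}{dt}\Ene(u)\ge 2\gamma\,\Ene(u),
\end{gather*}
the last inequality being~\eqref{fSL2} combined with $\gamma<0$.

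Next I would remove the damping by the weight $e^{-2\gamma t}$. Setting $f=e^{-2\gamma t}F$, $g=e^{-2\gamma t}G$ and $q=e^{-2\gamma t}\Ene(u)$, the system collapses to the very simple relations $f'=-4g$, $g'=-4q$, and $q'\ge 0$; in particular $g''=-4q'\le 0$, so $g$ is concave on $[0,\Tma)$. The nonnegativity of the variance enters through the identity $\Var(u)=F-cG$, which after weighting reads $f-cg=e^{-2\gamma t}\Var(u)\ge 0$, i.e. $f\ge cg=-|c|g$. Substituting $g=-f'/4$ converts this into the first-order differential inequality $f'\le \frac{4}{|c|}f$, valid throughout $[0,\Tma)$.

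The contradiction then follows at once. Since $f(0)=F(0)<0$, Gronwall's lemma applied to $f'\le\frac{4}{|c|}f$ gives $f(t)\le f(0)\,e^{4t/|c|}<0$ on $[0,\Tma)$, so $f$ stays negative and $g=-f'/4\ge -f/|c|\ge \frac{|f(0)|}{|c|}e^{4t/|c|}$ grows at least exponentially; but a concave function is dominated by its tangent line at the origin, $g(t)\le g(0)+g'(0)t$, hence grows at most linearly. These two bounds are incompatible once $t$ exceeds a threshold depending only on $f(0),g(0),g'(0),c$, and since both must hold on all of $[0,\Tma)$ we conclude $\Tma<\infty$. The only genuinely delicate step is the first one — guessing the combination $F$ and the exact value of $c$ that annihilates the nonlinear term, which is what lets the damped problem be handled at all — after which the argument is an elementary Gronwall-versus-concavity comparison; the regularity required to justify~\eqref{fSL2}--\eqref{fSL4} (namely $\DI\in H^1(\R^N)\cap L^2(\R^N,|x|^2\,dx)$, implicit in~\eqref{eNLS2b:1} since $\Var(\DI)$ appears) is supplied by the results recalled in Section~\ref{sCauchy}.
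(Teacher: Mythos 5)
Your proof is correct, and I checked the computations on which it hinges. Writing $P=\int_{\R^N}|\nabla u|^2$ and $Q=\int_{\R^N}|u|^{\alpha+2}$, identity~\eqref{fSL4} can be rewritten as $\Mom'=-4\Ene(u)+\frac{N\alpha-4}{\alpha+2}Q+2\gamma\Mom$, while \eqref{fSL2} gives $c\,\Ene'=2c\gamma\,\Ene(u)-\frac{c\gamma\alpha}{\alpha+2}Q$; your choice $c\gamma\alpha=N\alpha-4$ does make the $Q$-terms cancel, so $G'=2\gamma G-4\Ene(u)$ holds exactly, and $F'=2\gamma F-4G$, $\Ene'\ge 2\gamma\Ene$ follow as you state. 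The Gronwall step, the concavity of $g$ (which only needs $g'=-4q$ with $q$ nondecreasing), and the final exponential-versus-tangent-line contradiction are all sound, and the regularity needed for \eqref{fSL2}--\eqref{fSL4} is indeed supplied by Section~\ref{sCauchy}, since \eqref{eNLS2b:1} forces $\Var(\DI)<\infty$. One phrasing caveat: the nonlinear term does not disappear from the problem (it survives inside $\Ene$ and inside the inequality $q'\ge 0$); what your $c$ achieves is that the system closes in $(F,G,\Ene)$, which is exactly what your displayed equations say.

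Your route is a genuinely different execution of the same mechanism as the paper's, which follows the simplified Ohta--Todorova argument. There, instead of your combinations $F$ and $G$, one introduces the auxiliary functional $\Bca$ of~\eqref{fNLS23}, works with the weight $e^{-bt}$ where $b=-2\gamma\frac{4-(N-2)\alpha}{N\alpha-4}$ (not your $e^{-2\gamma t}$), exploits the algebraic identity $\gamma\Neh(u)-b\Ene(u)=-\eta\Bca(u)$ with $\eta=-4/c>0$ together with the inequality $\Ene\ge\Bca$ --- this is where $\alpha\ge\frac{4}{N}$ enters in the paper, whereas in your argument it enters only through the sign of $c$ --- and then integrates the system \eqref{fNLS36}--\eqref{fNLS38} three times to obtain the explicit pointwise bound \eqref{fNLS51} on $e^{-bt}\Var(u(t))$, whose right-hand side converges as $t\to\infty$ precisely to the left-hand side of~\eqref{eNLS2b:1}; negativity for large $t$ then contradicts $\Var\ge 0$. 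What each approach buys: the paper's explicit integration makes transparent why \eqref{eNLS2b:1} is exactly the limiting condition and gives a quantitative bound valid at every $t$ (hence, in principle, an estimate of $\Tma$ by locating the zero of the right-hand side of~\eqref{fNLS51}); your version is algebraically lighter, cleanly separates the roles of the hypotheses ($\gamma<0$ only through $q'\ge0$, $\alpha>\frac4N$ only through $c<0$), and reduces the contradiction to an elementary Gronwall-versus-concavity comparison, which also yields an (admittedly cruder) upper bound on $\Tma$ depending only on $F(0)$, $G(0)$, $G'(0)$ and $c$.
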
 

\begin{proof} 
We follow the simplified argument given in~\cite{OhtaT}.
We define
\begin{equation}  \label{fNLS23}
\Bca (w) = \frac {1} {2} \int  _{ \R^N  } |\nabla w|^2 - \frac {N\alpha } {4(\alpha +2)} \int  _{ \R^N  } |w|^{\alpha +2}
\end{equation} 
and we set ${e} (t)= \Ene (u(t))$, ${v}  (t)= \Var (u(t))$, ${\imath}  (t)= \Neh (u(t))$, $\jmath  (t)= \Mom (u(t))$, ${w}  (t)= \Bca (u(t))$, where the functionals $\Ene $, $\Var $, $\Neh $, $\Mom $ and $\Bca $ are defined by~\eqref{fDE1}, \eqref{fVar1}, \eqref{fDI1}, \eqref{fSL5}  and~\eqref{fNLS23}, respectively. 
It follows from~\eqref{fSL2}, \eqref{fSL3} and~\eqref{fSL4}  that
\begin{equation}  \label{fNLS30}
\frac {de} {dt}= \gamma   {\imath} (t) , \quad 
\frac {dv} {dt}= 2\gamma   {v}(t) - 4  {\jmath}  (t) , \quad 
\frac {d\jmath} {dt}= 2\gamma  {\jmath}  (t) -4  {w}(t) .
\end{equation} 
It is convenient to define
\begin{equation*} 
b = - 2 \gamma  \frac {4- (N-2) \alpha } {N\alpha -4} >0,\quad \eta = -2\gamma +b = \frac {-4\gamma \alpha } {N\alpha -4} >0.
\end{equation*} 
Using the identity $\gamma \imath  - be  = -\eta w$, we deduce from~\eqref{fNLS30} that
\begin{gather}  
\frac {d} {dt} (e^{-bt}  {e} (t))= e^{-bt} ( \gamma  { \imath } - b  {e} )  \label{fNLS36}  = - \eta  e^{-bt}  {w} (t) \\
\frac {d} {dt} (e^{-bt}  {v } (t))= - \eta  e^{-bt}  {v} (t) - 4  e^{-bt}  { \jmath } (t)  \label{fNLS37} \\
\frac {d} {dt} (e^{-bt}  { \jmath } (t))= - \eta  e^{-bt}  { \jmath } (t) - 4  e^{-bt}  { w } (t). \label{fNLS38}
\end{gather} 
Integrating~\eqref{fNLS36} on $(0,t)$, we obtain
\begin{equation*}  
e^{-bt}  {e} (t) + \eta  \int _0^t e^{-bs}  {w} (t) = \Ene  (\DI ).
\end{equation*} 
Since $\alpha \ge \frac {4} {N}$, we have $ {e} \ge  {w} $ so that
\begin{equation}   \label{fNLS40}  
e^{-bt}  {w} (t) + \eta  \int _0^t e^{-bs}  {w} (s)\, ds \le  \Ene  (\DI ) .
\end{equation} 
We now set
\begin{equation*} 
 \widetilde{w }(t)=  \int _0^t e^{-bs}  \widetilde{w} (s)\, ds, \quad  \widetilde{ \jmath  }(t)=  \int _0^t e^{-bs}  \widetilde{\jmath} (s)\, ds
\end{equation*} 
so that~\eqref{fNLS40} becomes $ \frac {d \widetilde{w} } {dt} + \eta  \widetilde{w }  \le  \Ene  (\DI )$. 
Therefore, $e^{\eta t} \widetilde{w }(t) \le \frac {e^{\eta t}- 1} {\eta } \Ene ( \DI )$, which implies 
\begin{equation} \label{fNLS45}
\int _0^t e^{\eta s} \widetilde{w }(s) \, ds \le \frac {e^{\eta t}- 1 - \eta t} {\eta ^2} \Ene ( \DI ). 
\end{equation} 
Integrating now~\eqref{fNLS38} on $(0,t)$, we obtain
$\frac {d\widetilde{\jmath } } {dt} + \eta  \widetilde{\jmath } = \Mom ( \DI ) -4  \widetilde{w } $, so that
\begin{equation}   \label{fNLS47}  
 e^{\eta t} \widetilde{\jmath }  (t) = \int _0^t e^{\eta s} [ \Mom ( \DI ) -4  \widetilde{w } (s)]\, ds .
\end{equation} 
We deduce from~\eqref{fNLS47} and~\eqref{fNLS45} that
\begin{equation}   \label{fNLS48}  
 e^{\eta t} \widetilde{\jmath }  (t) \ge   \frac {e^{\eta t}- 1} {\eta } \Mom  ( \DI ) - 4 \frac {e^{\eta t}- 1 - \eta t} {\eta ^2} \Ene ( \DI ).
\end{equation} 
Finally, since $ {v} (t)\ge 0$ we deduce from~\eqref{fNLS37} that 
$\frac {d} {dt} (e^{-bt}  {v } (t))\le  - 4  e^{-bt}  { \jmath } (t)$, so that
\begin{equation}   \label{fNLS50}
e^{-bt}  {v } (t)\le \Var (\DI )  - 4     \widetilde{ \jmath  } (t).
\end{equation} 
It now follows from~\eqref{fNLS50} and~\eqref{fNLS48} that
\begin{equation}   \label{fNLS51}
e^{-bt}  {v } (t)\le \Var (\DI )  - 4     \frac {1- e^{- \eta t} } {\eta } \Mom  ( \DI ) +16 \frac {1- (1 - \eta t) e^{- \eta t}} {\eta ^2} \Ene ( \DI ).
\end{equation} 
Assumption~\eqref{eNLS2b:1} means that $\Var (\DI )  -      \frac {4} {\eta } \Mom  ( \DI ) + \frac {16} {\eta ^2} \Ene ( \DI ) <0 $. Therefore, the right-hand side of~\eqref{fNLS51} becomes negative for $t$ large, which implies that $\Tma <\infty $.
\end{proof} 

\begin{rem} Here are a few comments on Theorem~\ref{eNLS2b}.
\begin{enumerate}[{\rm (i)}] 

\item The condition~\eqref{eNLS2b:1} is satisfied if $\DI = c \varphi $ with $\varphi \in H^1 (\R^N ) $, $\varphi \not = 0$ and $c$ is large. 

\item The condition $\alpha >\frac {4} {N}$ is essential in the proof, for the definition of $\eta$ and $b$.  If $\alpha =4/N$, then finite-time blowup occurs for some initial data~\cite{Mohamad, Correia}, but the proof follows  a very different argument.

\item We are not aware of a result similar to Theorem~\ref{eNLS2b} for initial values of infinite variance (in the spirit of Theorem~\ref{eNLS3}).

\end{enumerate} 
\end{rem} 

\section{The complex Ginzburg-Landau equation} \label{sGL} 

\subsection{Sufficient condition for finite-time blowup}  \label{sBlowup} 
In this section, we derive sufficient conditions for finite-time blowup in equation~\eqref{GL}, and upper estimates of the blowup time. Such conditions are obtained in~\cite{SnoussiT} in the case $\gamma \le 0$ (with extra conditions on the parameters), in~\cite{CazenaveDW} in the case $\gamma =0$, and in~\cite{CazenaveDF} in the case $\gamma >0$. The 
upper bound is established in~\cite{CazenaveDW} in the case $\gamma =0$.

\begin{thm} \label{eBUg1} 
Let $\gamma \in \R$, $\alpha >0$, $0\le \theta <\frac {\pi } {2}$, $\DI \in \Cz \cap H^1 (\R^N ) $, and let $u$ be the corresponding solution of~\eqref{GL} defined on the maximal interval $[0, \Tma)$.
If
\begin{equation} \label{eBUg1:1}
\begin{cases} 
\Ene (\DI) <0 & \gamma \ge 0 \\
\Ene  _{ \frac {\gamma } {\cos \theta } } (\DI) <0 & \gamma \le 0
\end{cases} 
\end{equation} 
(with the definitions~\eqref{fDE1} and~\eqref{fHeat4}) then  $u$ blows up in finite time, i.e., $\Tma <\infty $.
Moreover,
\begin{equation} \label{eBUg1:2}
\Tma \le 
\begin{cases} 
\displaystyle \frac {1} { \gamma \alpha} \log  \Bigl( 1+  \frac { \gamma  \| \DI \| _{ L^2 }^2} { (\alpha +2)    (- \Ene (\DI ))  \cos \theta } \Bigr)  & \gamma >0 \\ \displaystyle \frac { \| \DI \| _{ L^2 }^2} { \alpha (\alpha +2)    (- \Ene (\DI ))  \cos \theta }  & \gamma =0 \\ 
\displaystyle \frac {1} {- \gamma \alpha} \log  \Bigl( 1+  \frac {-2 \gamma  \| \DI \| _{ L^2 }^2} { 2(\alpha +2)    (- \Ene  _{ \frac {\gamma } {\cos \theta } }(\DI ))  \cos \theta  - \gamma \alpha  \| \DI \| _{ L^2 }^2} \Bigr)
& \gamma <0 .
\end{cases} 
\end{equation}
\end{thm}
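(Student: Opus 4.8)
The plan is to adapt the energy (Levine-type) method of Theorems~\ref{eNLH2} and~\ref{eNLH4} to the dissipative range $0\le \theta <\frac{\pi}{2}$, where $\cos\theta >0$. Every identity used for the heat equation has a counterpart among~\eqref{fGGLsbumu}--\eqref{fGGLq}, with a factor $\cos\theta$ in front of the dissipative terms. The device that makes these factors cancel is to bound $\int |u|^2\int |u_t|^2$ from below by the \emph{full} modulus $|\int \overline u\,u_t|^2$ rather than by $(\re \int \overline u\,u_t)^2$. Indeed, writing $\int\overline u\,u_t=\gamma\int_{\R^N}|u|^2-e^{i\theta}\Neh(u)$ from~\eqref{fGGLsbumu} and separating real and imaginary parts,
\begin{equation*}
\Bigl| \int _{\R^N} \overline u\, u_t \Bigr|^2 = \Bigl( \re \int _{\R^N} \overline u\, u_t \Bigr)^2 + \sin ^2\theta\, \Neh (u)^2 ,
\end{equation*}
and by~\eqref{fGGLs} the real part equals $\frac12\frac{d}{dt}\|u\|_{L^2}^2$. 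The extra summand $\sin^2\theta\,\Neh(u)^2$ is exactly what is needed below. I treat $\gamma >0$ and $\gamma \le 0$ separately.

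For $\gamma >0$ I would first remove the linear term by setting $v(t)=e^{-\gamma t}u(t)$, which solves $v_t=e^{i\theta}[\Delta v+e^{\alpha\gamma t}|v|^\alpha v]$, and then run the proof of Theorem~\ref{eNLH4}, keeping track of $\cos\theta$. With $\widetilde f=\|v\|_{L^2}^2$, $\widetilde\jmath=\|\nabla v\|_{L^2}^2-e^{\alpha\gamma t}\|v\|_{L^{\alpha+2}}^{\alpha+2}$ and $\widetilde e=\frac12\|\nabla v\|_{L^2}^2-\frac{e^{\alpha\gamma t}}{\alpha+2}\|v\|_{L^{\alpha+2}}^{\alpha+2}$, the analogues of~\eqref{eNLH4:3}--\eqref{eNLH4:6} read $\int\overline v\,v_t=-e^{i\theta}\widetilde\jmath$, $\frac{d\widetilde f}{dt}=-2\cos\theta\,\widetilde\jmath$ and (the counterpart of~\eqref{eNLH4:5}, where $\gamma>0$ is used to drop the nonnegative term $\frac{\alpha\gamma}{2}\int|\nabla v|^2$) $\frac{d\widetilde e}{dt}-\alpha\gamma\widetilde e\le -\cos\theta\int|v_t|^2$, whence $\widetilde e(t)\le e^{\alpha\gamma t}\Ene(\DI)<0$. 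Multiplying the Cauchy--Schwarz inequality by $\cos\theta$ and using $|\int\overline v\,v_t|^2=\widetilde\jmath^2$ together with $\frac{d\widetilde f}{dt}=-2\cos\theta\,\widetilde\jmath$, all factors $\cos\theta$ cancel and I recover the differential inequality of Theorem~\ref{eNLH4}, now in the form $\frac{d}{dt}(\widetilde f^{-\alpha/2})+\alpha(\alpha+2)\cos\theta\,(-\Ene(\DI))\|\DI\|_{L^2}^{-(\alpha+2)}e^{\alpha\gamma t}\le 0$. Integrating gives both $\Tma<\infty$ and the first line of~\eqref{eBUg1:2}.

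For $\gamma \le 0$ I would work directly with $f(t)=\|u(t)\|_{L^2}^2$ and $e(t)=\Ene_{\frac{\gamma}{\cos\theta}}(u(t))$. Rewriting~\eqref{fGGLs} gives $\frac{df}{dt}=-2\cos\theta\,\Neh_{\frac{\gamma}{\cos\theta}}(u)$, and the elementary bound $\Neh_{\frac{\gamma}{\cos\theta}}\le (\alpha+2)\Ene_{\frac{\gamma}{\cos\theta}}-\frac{(-\gamma)\alpha}{2\cos\theta}\|u\|_{L^2}^2$ yields $\frac{df}{dt}\ge 2\cos\theta(\alpha+2)(-e)+(-\gamma)\alpha f$. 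The crucial step is to use the energy identity~\eqref{fGGLqbu} together with~\eqref{fGGLs} to express $\frac{de}{dt}$, hence $\cos\theta\int|u_t|^2$, and then insert this into the full-modulus Cauchy--Schwarz inequality above; after eliminating $\Neh(u)$ through $\Neh(u)=\frac{1}{\cos\theta}(\gamma f-\frac12\frac{df}{dt})$, the terms in $f^2$ and $f\frac{df}{dt}$ cancel identically and one is left with the clean inequality $-f\frac{de}{dt}\ge \frac{1}{4\cos\theta}\bigl(\frac{df}{dt}\bigr)^2$, precisely the $\cos\theta$-weighted analogue of~\eqref{fHeat12}. From here the proof of Theorem~\ref{eNLH2} carries over: the two displayed inequalities give $\frac{d}{dt}(-ef^{-(\alpha+2)/2}+\frac{-\gamma}{2\cos\theta}f^{-\alpha/2})\ge 0$, hence $-e+\frac{-\gamma}{2\cos\theta}f\ge \eta f^{(\alpha+2)/2}$ with $\eta=-\Ene_{\frac{\gamma}{\cos\theta}}(\DI)\|\DI\|_{L^2}^{-(\alpha+2)}+\frac{-\gamma}{2\cos\theta}\|\DI\|_{L^2}^{-\alpha}>0$; feeding this back into the lower bound for $\frac{df}{dt}$ produces $\frac{df}{dt}-2\gamma f\ge 2\cos\theta(\alpha+2)\eta f^{(\alpha+2)/2}$, and setting $g=e^{-2\gamma t}f$ as in the proof of Theorem~\ref{eNLH2} gives $\frac{d}{dt}(g^{-\alpha/2})+\alpha(\alpha+2)\cos\theta\,\eta\,e^{\gamma\alpha t}\le 0$. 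Integrating over $[0,\Tma)$ with $g(0)=\|\DI\|_{L^2}^2$ forces $\Tma<\infty$; the resulting bound is the third line of~\eqref{eBUg1:2} when $\gamma<0$, and degenerates (with $e^{\gamma\alpha t}\equiv1$ and the second term of $\eta$ vanishing) to the algebraic second line when $\gamma=0$.

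The main obstacle is the $\gamma\le 0$ case, and specifically the verification that the $\cos\theta$-weighted Cauchy--Schwarz computation collapses to $-f\frac{de}{dt}\ge\frac{1}{4\cos\theta}\bigl(\frac{df}{dt}\bigr)^2$. Unlike the heat equation, $t\mapsto \Ene_{\frac{\gamma}{\cos\theta}}(u(t))$ is not manifestly monotone: its derivative, obtained from~\eqref{fGGLqbu} and~\eqref{fGGLs}, contains the indefinite term $2\gamma\sin^2\theta\,\Neh(u)$ together with $-\frac{\gamma^2\sin^2\theta}{\cos\theta}\|u\|_{L^2}^2$, so monotonicity of $e$ must emerge only \emph{after} the Cauchy--Schwarz step, and it is exactly the extra summand $\sin^2\theta\,\Neh(u)^2$ in the full modulus that cancels these contributions. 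Care is also needed to secure $f(t)>0$ and $\frac{df}{dt}>0$ on $[0,\Tma)$: from $-f\frac{de}{dt}\ge 0$ one first gets $\frac{de}{dt}\le 0$, hence $-e(t)\ge -e(0)>0$, which then forces $\frac{df}{dt}>0$ and keeps $f$ above $\|\DI\|_{L^2}^2>0$, legitimating the divisions and the replacement of one factor $\frac{df}{dt}$ by its lower bound. Finally, $\cos\theta>0$, i.e.\ $\theta<\frac{\pi}{2}$, is used throughout, and is precisely what excludes the Schr\"odinger endpoint treated in Section~\ref{sNLS}.
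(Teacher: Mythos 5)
Your proposal is correct. For $\gamma>0$ it is essentially the paper's own proof: the same change of variable $v=e^{-\gamma t}u$, the same use of the full modulus $\bigl|\int\overline v\,v_t\bigr|^2=\widetilde\jmath^{\,2}$ to make the factors $\cos\theta$ cancel against $\frac{d\widetilde f}{dt}=-2\cos\theta\,\widetilde\jmath$, and the same final differential inequality leading to the first two lines of~\eqref{eBUg1:2}. For $\gamma\le 0$, however, you take a genuinely different route. The paper deliberately avoids adapting Theorem~\ref{eNLH2} directly --- it remarks that this strategy ``leads to intricate calculations and unnecessary conditions'' (citing~\cite{CazenaveDF}) --- and instead uses the transformation~\eqref{fSC3}--\eqref{fSC2}: the scaling by $\mu$ and the phase $e^{-it\sin\theta}$ absorb the linear term $\gamma u$ into the $e^{i\theta}$-bracket, producing equation~\eqref{GL2}, for which the energy $\Ene_{-1}$ is \emph{exactly} monotone (identity~\eqref{fSN3}); Levine's argument then runs as for the heat equation, and the bound on $\Sma$ is converted into~\eqref{eBUg1:2} through~\eqref{fNeg4} and~\eqref{fNeg2}. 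You instead stay in the original variables with $e(t)=\Ene_{\frac{\gamma}{\cos\theta}}(u(t))$, whose derivative $\frac{de}{dt}=-\cos\theta\int|u_t|^2+2\gamma\sin^2\theta\,\Neh(u)-\frac{\gamma^2\sin^2\theta}{\cos\theta}f$ is not manifestly nonpositive, and you observe that the extra summand $\sin^2\theta\,\Neh(u)^2$ furnished by the full-modulus Cauchy--Schwarz inequality combines with the two indefinite terms into the perfect square $\frac{\sin^2\theta}{\cos\theta}\bigl(\gamma f-\cos\theta\,\Neh(u)\bigr)^2=\frac{\sin^2\theta}{4\cos\theta}\bigl(\frac{df}{dt}\bigr)^2$, whence $-f\frac{de}{dt}\ge\frac{1}{4\cos\theta}\bigl(\frac{df}{dt}\bigr)^2$. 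I checked this cancellation and the subsequent steps (the sign bootstrap $e'\le 0\Rightarrow -e\ge -e(0)>0\Rightarrow f'>0$ is invoked in the right order, and $f'>0$ is indeed needed to replace one factor of $f'$ by its lower bound); integrating your final inequality reproduces exactly the second and third lines of~\eqref{eBUg1:2}, with the same constants. As to what each approach buys: yours is self-contained, requires no change of variables or back-translation of the blowup time, and shows that the direct Levine route is in fact not intricate once the perfect-square structure is identified --- the $\cos\theta$ and $\sin^2\theta/\cos\theta$ contributions recombine into $1/\cos\theta$; the paper's rescaling, on the other hand, isolates the structural fact that~\eqref{GL2} is an $e^{i\theta}$-rotated gradient flow of $\Ene_{-1}$, which the paper then reuses beyond this theorem --- for the potential-well refinement in Remark~\ref{eGN1} and Remark~\ref{eBUg2}, and for Theorem~\ref{eGGLhbu} --- where the manifest monotonicity of $\Ene_{-1}$ matters in its own right.
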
 

\begin{proof} 
We consider separately the cases $\gamma \ge 0$ and $\gamma <0$.

\medskip 

\noindent  {\bf The case $\gamma \ge 0$.}\quad 
We follow the argument of the proof of Theorem~\ref{eNLH4}, and in particular we use the same notation~\eqref{fNot1}.
We only indicate the minor changes that are necessary. 
The function $v(t)= e^{- \gamma t} u(t)$ now satisfies the equation 
\begin{equation} \label{GL2b}
\begin{cases} 
v_t= e^{i\theta } [\Delta v+ e^{\alpha \gamma t} |v|^\alpha v] \\
v(0)= \DI.
\end{cases} 
\end{equation}  
Identity~\eqref{eNLH4:3} becomes
\begin{equation} \label{fPos6b}
\int  _{ \R^N  }  \overline{v}v_t= -e^{i\theta }   \widetilde{\jmath} (t),
\end{equation}
so that
\begin{equation} \label{fPos6}
\frac {d  \widetilde{f}} {dt} = 2 \Re \int _{\R^N }  \overline{v}  v_t =   - 2 \widetilde{\jmath } (t)  \cos \theta.
\end{equation}
Moreover, $\frac {d  \widetilde{e} } {dt}   =  -  \cos \theta   \int  _{ \R^N  } |v_t|^2  + \alpha \gamma  \widetilde{e}  -\frac {\alpha \gamma } {2} \int  _{ \R^N  } |\nabla u|^2$, so that inequality~\eqref{eNLH4:5} becomes
\begin{equation} \label{fPos7}
\frac {d  \widetilde{e} } {dt}  -\alpha \gamma  \widetilde{e}  \le   - \cos \theta   \int  _{ \R^N  } |v_t|^2.
\end{equation}
 Applying~\eqref{fPos7}, Cauchy--Schwarz,  \eqref{fPos6b} and~\eqref{fPos6} we obtain
\begin{equation} \label{fPos8}
-  \widetilde{f}    \Bigl(    \frac {d  \widetilde{e} } {dt} -\alpha \gamma  \widetilde{e}   \Bigr) \ge   \cos \theta   \int  |v|^2 \int  |v_t|^2 \ge   \cos \theta   \Bigl| \int  \overline{v}v_t  \Bigr|^2   =     \cos \theta   \widetilde{\jmath }  ^2  = \frac {1} {2}   (-  \widetilde{\jmath }   ) \frac {d \widetilde{f} } {dt} .
\end{equation}
The crux is that the factor $\cos \theta $ in the first inequalities in~\eqref{fPos8} has been cancelled in the last one by using~\eqref{fPos6}. In particular, the left-hand and the right-hand terms in~\eqref{fPos8} are the same as in~\eqref{eNLH4:7}. 
Therefore, we may now continue the argument as in the proof of Theorem~\ref{eNLH4}. Using~\eqref{fPos6} instead of~\eqref{eNLH4:3}, we arrive at the inequality
\begin{equation*}
\alpha  (\alpha +2)   (- \Ene (\DI) ) \|\DI\| _{ L^2 } ^{-( \alpha +2) }
e^{  \alpha \gamma t } \cos \theta  +  \frac {d} {dt} ( \widetilde{f}^{- \frac {\alpha } {2}} ) \le 0
\end{equation*}
and estimate~\eqref{eBUg1:2} easily follows in both the cases $\gamma >0$ and $\gamma =0$.

\medskip 

\noindent  {\bf The case $\gamma < 0$.}\quad 
Since the result in the case $\gamma \ge 0$ is obtained by the argument of the proof of Theorem~\ref{eNLH4}, one could try now to follow the proof of Theorem~\ref{eNLH2}. It turns out that this strategy leads to intricate calculations and unnecessary conditions. (See~\cite{CazenaveDF}.) Instead, we follow the strategy of~\cite{SnoussiT}
and we  set
\begin{align}
\mu & = (-\gamma )^{-\frac {1} {2}} (\cos \theta )^{\frac {1} {2}}  \label{fSC3} \\
v(t,x) &= e^{- i t \sin \theta }  \mu ^{\frac {2} {\alpha }}  u  (\mu ^2 t, \mu x)  \label{fSC1} \\
\DIb (x) &=  \mu ^{\frac {2} {\alpha }}  \DI (\mu x) \label{fSC2}
\end{align}
so that
\begin{equation} \label{GL2}
\begin{cases}
\displaystyle  v_t =  e^{ i\theta } [  \Delta v +  |v|^\alpha v -v ]  \\
v(0 )= \DIb.
\end{cases}
\end{equation}
Since $u$ is defined on $[0, \Tma )$,  $v$ is defined on $[0, \Sma )$ with 
\begin{equation} \label{fNeg4} 
\Sma = \frac {-\gamma \Tma} {\cos \theta }. 
\end{equation} 
We introduce the notation
\begin{equation} \label{fNot2} 
 \widetilde{f}  =  \| v  \| _{ L^2 }^2 , \quad   
 \widetilde{\jmath} =  \Neh _{ -1 } (v(t))  ,    \quad  
  \widetilde{e} = \Ene _{ -1 } (v(t))
\end{equation}
where $\Neh _{ -1 }$ and $\Ene _{ -1 }$ are defined by~\eqref{fHeat3} and~\eqref{fHeat4}, and we observe that
\begin{equation} \label{fNeg2} 
 \| \DIb \| _{ L^2 }^2 = \mu ^{ \frac {4} {\alpha } - N}   \| \DI \| _{ L^2 }^2, \quad 
\Ene  _{ -1 } ( \DIb ) = \mu ^{2 + \frac {4} {\alpha } - N} \Ene  _{ \frac {\gamma } {\cos \theta } } ( \DI ).
\end{equation} 
We now follow the proof of Theorem~\ref{eNLH2}. 
Equation~\eqref{GL2} yields
\begin{gather}
  \int  _{ \R^N  }  \overline{v} v_t   = - e^{i\theta }  \widetilde{\jmath}  (t)  \label{fSN2} \\
   \frac {d  \widetilde{f} } {dt}  = - 2   \widetilde{\jmath} (t)  \cos \theta \label{fSN1}  \\
\frac {d  \widetilde{e}} {dt} = - \cos \theta \int  _{ \R^N  }  |v_t|^2 . \label{fSN3}
\end{gather}
Since $\Ene  _{ \frac {\gamma } {\cos \theta } } ( \DI )<0$, we deduce from~\eqref{fNeg2} that $ \widetilde{e} (0)<0$. Therefore, $ \widetilde{e} (t) <0 $ by~\eqref{fSN3} (hence $ \widetilde{\jmath} (t) <0$) and $\frac {d  \widetilde{f} } {dt} >0$ by~\eqref{fSN1}.
Applying~\eqref{fSN3}, Cauchy--Schwarz,  \eqref{fSN2} and~\eqref{fSN1} we obtain
\begin{equation} \label{fNeg1}
-   \widetilde{f}   \frac {d   \widetilde{e}  } {dt}  =   \cos \theta   \int  |v|^2 \int  |v_t|^2 \ge   \cos \theta   \Bigl| \int  \overline{v}v_t  \Bigr|^2   =     \cos \theta   \widetilde{\jmath }  ^2  = \frac {1} {2}   (-  \widetilde{\jmath }   ) \frac {d \widetilde{f} } {dt} .
\end{equation}
At this point, we use the property
\begin{equation}\label{fNeg5b1} 
 \widetilde{j} (t)  = (\alpha +2)  \widetilde{e} (t) -  \frac {\alpha   } {2  }  \widetilde{f} (t)  - \frac {\alpha } {2 }\int  _{ \R^N  } |\nabla v(t)|^2
 \le (\alpha +2)  \widetilde{e} (t) -  \frac {\alpha   } {2  }  \widetilde{f} (t) <0
\end{equation}
so that~\eqref{fNeg1} yields $-   \widetilde{f}   \frac {d   \widetilde{e}  } {dt}  \le  \frac {1} {2}   (- (\alpha +2)  \widetilde{e} +  \frac {\alpha   } {2  }  \widetilde{f}  ) \frac {d \widetilde{f} } {dt} $.
Therefore, $\frac {d} {dt}  ( - \widetilde{e}   \widetilde{f} ^{-\frac {\alpha +2} {2}} + \frac {1 } {2 }  \widetilde{f} ^{-\frac {\alpha } {2}} )  \ge 0$,
and so
\begin{equation} \label{fNeg5}
- \widetilde{e}  + \frac {1 } {2 }  \widetilde{f}   \ge \eta  \widetilde{f} ^{\frac {\alpha +2} {2}} ,
\end{equation}
with
\begin{equation} \label{fNeg6}
\eta = ( -  \widetilde{e} (0) )  \widetilde{f} (0) ^{- \frac {\alpha +2} {2}} + \frac {1 } {2 }  \widetilde{f} (0)^{-\frac {\alpha } {2} } >0.
\end{equation}
It follows from~\eqref{fSN1}, \eqref{fNeg5b1}, and~\eqref{fNeg5} that
\begin{equation*}
\frac {d \widetilde{f}} {dt} \ge [ 2(\alpha +2)   (- \widetilde{e} ) + \alpha  \widetilde{f}  ] \cos \theta \ge  (-2   \widetilde{f}  +  2(\alpha +2) \eta   \widetilde{f} ^{\frac {\alpha +2} {2}})  \cos \theta .
\end{equation*}
Therefore,
$ \frac {d} {dt}  [ (e^{ 2 t \cos \theta }  \widetilde{f} )^{-\frac {\alpha } {2}} - (\alpha +2) \eta  e ^{ - \alpha  t \cos \theta } ] \le 0$, so that $ (\alpha +2) \eta  (1- e^{ -\alpha  t \cos \theta } ) \le    \widetilde{f} (0)^{-\frac {\alpha } {2}}$
for all $0\le t<\Sma$. It follows easily that
\begin{equation} \label{fES1} 
\Sma \le  \frac {1} {\alpha \cos \theta } \log  \Bigl(  1+ \frac{2  \| \DIb \| _{ L^2 }^2 } { 2(\alpha +2) (-  \Ene _{ -1 } ( \DIb )  +  \alpha  \| \DIb \| _{ L^2 }^2 } \Bigr) .
\end{equation}
Applying~\eqref{fNeg4}, \eqref{fNeg2}, and~\eqref{fSC3}, estimate~\eqref{eBUg1:2}  follows.
\end{proof} 

\begin{rem} \label{eGN1}
One can study equation~\eqref{GL2} for its own sake. The proof of Theorem~\ref{eBUg1} shows that if $\DIb \in \Cz \cap H^1 (\R^N ) $ satisfies $\Ene _{ -1 }  (\DIb ) \le 0$ and $\DIb \not \equiv 0$, then the corresponding solution of~\eqref{GL2} defined on the maximal interval $[0, \Sma)$ blows up in finite time, and~\eqref{fES1} holds.
It follows from~\eqref{fES1} that
\begin{equation} \label{fES1b3}
\Sma \le  \frac {1} {\alpha \cos \theta } \log (\frac{\alpha+2}{\alpha}).
\end{equation}
In particular, the bound in~\eqref{fES1b3} is independent of $\DIb$, so that this result does not apply to solutions
for which the blow-up time would be large. When $\alpha <\frac {4} {N-2}$, this restriction can be improved by  the potential well argument we used in Theorem~\ref{ePSb3} for the heat equation. 
More precisely, if  $\DIb \in \Cz \cap H^1 (\R^N ) $ satisfies $ \Ene _{ -1 } (\DIb ) < \Ene _{ -1 }( \Gst _{ -1 })$ and $\Neh _{ -1 }( \DIb )<0$, where $ \Gst _{ -1 }$ is as in Theorem~\ref{ePSb3}, then the corresponding solution of~\eqref{GL2} defined on the maximal interval $[0, \Sma)$.
blows up in finite time, i.e., $\Sma <\infty $, and
\begin{equation} \label{eGN3:1}
\Sma \le \frac {1} {\alpha \cos \theta } \left[ \frac {(\alpha +4) [\Ene _{ -1 }( \DIb )]^+} {\Ene _{ -1 }( \Gst _{ -1 }) - \Ene _{ -1 }( \DIb )} +   \log  \Bigl(  \frac {2(\alpha +2)} {\alpha } \Bigr) \right].
\end{equation}
The proof is easily adapted from the proof of Theorem~\ref{ePSb3}, in the same way as the proof of Theorem~\ref{eBUg1} (case $\gamma \le 0$) is adapted from the proof of Theorem~\ref{eNLH2}.
Note that this last result applies to solutions for which the maximal existence time is arbitrary large.
Indeed, given $\varepsilon >0$,  $\DIb ^\varepsilon = (1+\varepsilon )  \Gst _{ -1 }$ satisfies $\Ene _{ -1 }(\DIb ^\varepsilon)<\Ene _{ -1 }( \Gst _{ -1 })$ and $\Neh _{ -1 }(\DIb ^\varepsilon)<0$, while the blowup time of the corresponding solution of~\eqref{GL2} goes to infinity as $\varepsilon \downarrow 0$. (Cf. Remark~\ref{ePSb4}.)
\end{rem}

\begin{rem} \label{eBUg2} 
Here are some comments on Theorem~\ref{eBUg1}.
\begin{enumerate}[{\rm (i)}] 

\item \label{eBUg2:1} 
If $\gamma \le 0$, one can replace assumption~\eqref{eBUg1:1} by the slightly weaker assumption $\Ene  _{ \frac {\gamma } {\cos \theta } }(\DI) \le 0$ and $\DI \not = 0$. 
See Remark~\ref{eNLH3}~\eqref{eNLH3:0}. 

\item \label{eBUg2:2} Let $\alpha >0$, $ 0\le \theta < \frac {\pi } {2}$, and fix $\DI  \in \Cz \cap H^1 (\R^N )$ such that $\Ene ( \DI) <0$. 
It follows from~\eqref{eBUg1:2} that $\Tma \to 0$ as $\gamma \to \infty $. 
On the other hand, it is clear that if $\gamma $ is sufficiently negative, then $\Ene  _{ \frac {\gamma } {\cos \theta } } ( \DI) \ge  0$, so that one cannot apply Theorem~\ref{eBUg1}. This is not surprising,  since the corresponding solution of~\eqref{GL} is global if $\gamma $ is sufficiently negative. (See Remark~\ref{eLocGL3}.)

\item \label{eBUg2:3} Let $\alpha >0$ and $\gamma <0$. Given $\DI \in \Cz \cap H^1 (\R^N ) $ such that $\Ene  _{ \frac {\gamma } {\cos \theta } } ( \DI )<0$, it follows from~\eqref{eBUg1:2} that $\Tma < \frac {1} {-\gamma \alpha } \log (\frac {\alpha +2} {\alpha })$. In particular, we see that Theorem~\ref{eBUg1} does not apply to solutions
for which the blow-up time would be large. However, one can use the result presented in Remark~\ref{eGN1} above. Using the transformation~\eqref{fSC3}--\eqref{fSC2}, and formulas~\eqref{fNeg4}  and~\eqref{fNeg2}, we deduce from~\eqref{eGN3:1}  that if $ \Ene  _{ \frac {\gamma } {\cos \theta } } ( \DI ) < \mu ^{-2- \frac {4} {\alpha }+ N} \Ene _{ -1 }( \Gst _{ -1 })$ and $ \Neh  _{ \frac {\gamma } {\cos \theta } } ( \DI ) < 0$, then the corresponding solution of~\eqref{GL} blows up in finite time and 
\begin{equation*} 
\Tma \le \frac {1} {-\gamma \alpha } \left[ \frac {(\alpha +4) [\Ene _{ \frac {\gamma } {\cos \theta } }( \DI )]^+} { \mu ^{-2- \frac {4} {\alpha }+ N}\Ene _{ -1 }( \Gst _{ -1 }) -  \Ene _{ \frac {\gamma } {\cos \theta } }( \DI )} +   \log  \Bigl(  \frac {2(\alpha +2)} {\alpha } \Bigr) \right].
\end{equation*} 
Moreover, this property applies to solutions for which the maximal existence time is arbitrary large. (The stationary solution $ \Gst _{ -1 }$ of~\eqref{GL2} corresponds to the standing wave $u(t,x) = \mu ^{- \frac {2} {\alpha }} e^{it \mu ^{-2} \sin \theta } \Gst _{ -1 } (\mu ^{-1} x) $ of~\eqref{GL}.)
\end{enumerate} 
\end{rem} 

\subsection{Behavior of the blowup time as a function of $\theta $} \label{sBUT} 

Fix $\alpha >0$ and $\gamma \in \R$. Given $\DI\in \Cz \cap H^1 (\R^N ) $, we let $u^\theta $, for $ 0\le \theta < \frac {\pi } {2}$, be the solution of~\eqref{GL} defined on the maximal interval $[0, \Tma^\theta )$. 
If $\gamma \ge 0$ and $\Ene (\DI) <0$, or if $\gamma <0$ and $\Ene  _{ \frac {\gamma } {\cos \theta } } (\DI) <0$, then we know that $\Tma^\theta <\infty $. (See Theorem~\ref{eBUg1}.)
We now study the behavior of $\Tma^\theta $ as $\theta \to  \frac {\pi } {2}$, i.e. as the equation~\eqref{GL} approaches the nonlinear Schr\"o\-din\-ger equation~\eqref{NLS}. 
We consider separately the cases $\alpha <\frac {4} {N}$ and $\alpha \ge \frac {4} {N}$. 

\subsubsection{The case $\alpha <\frac {4} {N}$} \label{sGP}

If $\alpha <\frac {4} {N}$, then all solutions of the limiting equation~\eqref{NLS}  are global by Proposition~\ref{eNLS1}, and so we may expect that $\Tma^\theta \to \infty $ as $\theta \to \frac {\pi } {2}$. 
This is indeed what happens. 
Indeed, one possible proof of global existence for the nonlinear Schr\"o\-din\-ger equation~\eqref{NLS} is based on the Gagliardo-Nirenberg inequality
\begin{equation} \label{fGNb}
 \| w \| _{ L^{\alpha +2} }^{\alpha +2} \le   \| \nabla w \| _{ L^2 }^2 + A  \|w\| _{ L^2 }^{2 + \frac {4\alpha } {4-N\alpha }},
\end{equation}
where $A$ is a constant independent of $w\in H^1 (\R^N ) $. (See eg.~\cite{AdamsF}.)
Similarly, using~\eqref{fGNb} one can prove the following result. 
(For $\gamma =0$, this is~\cite[Theorem~1.2]{CazenaveDW}.)

\begin{thm} \label{eGlob1} 
Let $0<\alpha <\frac {4} {N}$ and $\gamma \in \R$. Given $\DI\in \Cz \cap H^1 (\R^N ) $, let $u^\theta $, for $ 0\le \theta < \frac {\pi } {2}$, be the solution of~\eqref{GL} defined on the maximal interval $[0, \Tma^\theta )$. 
If $\gamma \ge 0$, then
\begin{equation}  \label{eGlob1:1} 
\Tma ^\theta \ge 
\begin{cases} 
\displaystyle \frac {4-N\alpha } {4\alpha \gamma } \log  \Bigl( 1+ \frac {\gamma } {A  \| \DI \| _{ L^2 }^{\frac {4\alpha } {4-N\alpha }} \cos \theta} \Bigr) & \gamma >0 \\
\displaystyle \frac {4-N\alpha } {4\alpha A  \| \DI \| _{ L^2 }^{\frac {4\alpha } {4-N\alpha }} \cos \theta } & \gamma =0 
\end{cases} 
\end{equation} 
and if $\gamma <0$, then
\begin{equation}  \label{eGlob1:2} 
\Tma ^\theta \ge 
\begin{cases} 
 \infty  & \displaystyle  \cos \theta \le \frac {-\gamma } {A  \| \DI \| _{ L^2 }^{\frac {4\alpha } {4-N\alpha }}} \\
\displaystyle   \frac {4-N\alpha } {4\alpha \gamma } \log  \Bigl( 1+ \frac {\gamma } {A  \| \DI \| _{ L^2 }^{\frac {4\alpha } {4-N\alpha }} \cos \theta} \Bigr) & \displaystyle  \cos \theta > \frac {-\gamma } {A  \| \DI \| _{ L^2 }^{\frac {4\alpha } {4-N\alpha }}} 
\end{cases} 
\end{equation} 
where $A$ is the constant in~\eqref{fGNb}.
\end{thm}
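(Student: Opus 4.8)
The plan is to reduce the PDE to a scalar differential inequality for the squared $L^2$ norm, and then to compare with an explicitly solvable Bernoulli equation whose maximal existence interval yields the lower bound on $\Tma^\theta$. Throughout I write $f(t)=\| u^\theta(t)\| _{L^2}^2$ and may assume $\DI\not\equiv 0$, the case $\DI\equiv 0$ being trivial (the right-hand sides of~\eqref{eGlob1:1}--\eqref{eGlob1:2} then equal $+\infty$).

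First I would bound the functional $\Neh$ from below using the Gagliardo--Nirenberg inequality~\eqref{fGNb}. Since $\Neh(w)=\| \nabla w\| _{L^2}^2-\| w\| _{L^{\alpha+2}}^{\alpha+2}$, the gradient term in~\eqref{fGNb} cancels and one gets at once
\[
\Neh(u^\theta(t))\ge -A\,\| u^\theta(t)\| _{L^2}^{2+\frac{4\alpha}{4-N\alpha}}=-A\,f(t)^{1+\beta},\qquad \beta\Eqdef\frac{2\alpha}{4-N\alpha}>0 .
\]
Because $\DI\in\Cz\cap H^1(\R^N)$ and $0\le\theta<\frac{\pi}{2}$, the function $f$ is $C^1$ and satisfies the energy identity~\eqref{fGGLs}; substituting the bound above (and using $\cos\theta>0$) yields
\[
f'(t)\le 2\gamma f(t)+2A\cos\theta\, f(t)^{1+\beta}
\]
on $[0,\Tma^\theta)$.

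The key step is to linearize this Bernoulli-type inequality by setting $h=f^{-\beta}$ (legitimate since $f>0$). A direct computation, in which the inequality flips because the factor $-\beta f^{-\beta-1}$ is negative, gives $h'+2\beta\gamma\,h\ge -2\beta A\cos\theta$. Multiplying by the integrating factor $e^{2\beta\gamma t}$ and integrating over $[0,t]$ produces an explicit lower bound for $h$ in terms of $h(0)=\| \DI\| _{L^2}^{-2\beta}$: for $\gamma\ne 0$,
\[
h(t)\ge e^{-2\beta\gamma t}\Bigl[h(0)+\tfrac{A\cos\theta}{\gamma}\Bigr]-\tfrac{A\cos\theta}{\gamma},
\]
and for $\gamma=0$, $h(t)\ge h(0)-2\beta A\cos\theta\, t$. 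I would then invoke the $L^2$ blowup alternative of Proposition~\ref{eLocGL1}: since $\alpha<\frac{4}{N}$, a finite $\Tma^\theta$ forces $\| u^\theta(t)\| _{L^2}\to\infty$, i.e. $h(t)\to 0$, as $t\uparrow\Tma^\theta$. Hence $\Tma^\theta$ is at least the first time the explicit lower bound on $h$ vanishes; reading off that time and recalling $2\beta=\frac{4\alpha}{4-N\alpha}$ reproduces~\eqref{eGlob1:1} for $\gamma\ge 0$ and the second alternative of~\eqref{eGlob1:2} for $\gamma<0$.

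The one point that requires genuine care—and the step I would flag as the main obstacle—is the sign analysis when $\gamma<0$. There the constant $-\frac{A\cos\theta}{\gamma}$ is \emph{positive} and $e^{-2\beta\gamma t}\to\infty$, so the behaviour of the lower bound depends on the sign of $h(0)+\frac{A\cos\theta}{\gamma}$. When this quantity is $\ge 0$, equivalently $\cos\theta\le\frac{-\gamma}{A\| \DI\| _{L^2}^{4\alpha/(4-N\alpha)}}$, the lower bound stays bounded away from $0$ for all $t$, so $f$ remains bounded and $\Tma^\theta=\infty$, giving the first alternative of~\eqref{eGlob1:2}; otherwise the bound decreases to $0$ in finite time, giving the second. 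Matching these thresholds to the stated formulas is the only delicate bookkeeping, the remaining computations being routine.
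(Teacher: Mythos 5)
Your proof is correct and is essentially the paper's own argument: both derive the Bernoulli inequality $f'\le 2\gamma f+2A\cos\theta\, f^{1+\frac{2\alpha}{4-N\alpha}}$ from~\eqref{fGGLs} and~\eqref{fGNb}, linearize it by passing to the power $-\frac{2\alpha}{4-N\alpha}$ (your $e^{2\beta\gamma t}h(t)$ is exactly the paper's $(e^{-2\gamma t}f)^{-\frac{2\alpha}{4-N\alpha}}$), and conclude via the $L^2$ blowup alternative of Proposition~\ref{eLocGL1}. The only difference is cosmetic bookkeeping in how the vanishing time of the lower bound is read off.
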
 

\begin{proof} 
We combine~\eqref{fGGLs} and~\eqref{fGNb} to obtain the desired conclusion. Setting $f(t)=  \| u^\theta (t)\| _{ L^2 }^2$, we deduce from~\eqref{fGGLs} and~\eqref{fGNb} that
\begin{equation*} 
\frac {df} {dt} = 2\gamma f + \cos \theta  ( -2  \|\nabla u\| _{ L^2 }^2 + 2 \| w \| _{ L^{\alpha +2} }^{\alpha +2})  \le  2\gamma f + 2 A  f^{1 + \frac {2\alpha } {4-N\alpha }} \cos \theta 
\end{equation*} 
so that
\begin{equation*} 
\frac {d} {dt} (e^{-2\gamma t} f) \le 2A e^{\frac {4\alpha \gamma t} {4-N\alpha }} (e^{-2\gamma t} f) ^{1 + \frac {2\alpha } {4-N\alpha }} \cos \theta .
\end{equation*} 
This means that $\frac {d} {dt} (- (e^{-2\gamma t} f) ^{-\frac {2\alpha } {4-N\alpha }}) \le \frac {4A\alpha \cos \theta } {4-N\alpha } e^{\frac {4\alpha \gamma t} {4-N\alpha }}$, from which we deduce that
\begin{equation*} 
(e^{-2\gamma t} f) ^{-\frac {2\alpha } {4-N\alpha }} \ge  \| \DI \|^{-\frac {4\alpha } {4-N\alpha }} - \frac{4A\alpha \cos \theta } {4-N\alpha } \int _0^t e^{\frac {4\alpha \gamma s} {4-N\alpha }} ds.
\end{equation*} 
The above inequality yields a control on $ \|u(t)\| _{ L^2 }$ for all $0\le t< \Tma^\theta $ such that $\frac{4A\alpha \cos \theta } {4-N\alpha } \int _0^t e^{\frac {4\alpha \gamma s} {4-N\alpha }} ds <  \| \DI \|^{-\frac {4\alpha } {4-N\alpha }}$. Therefore, if we set
\begin{equation}  \label{eGlob1:3} 
\tau = \sup  \Bigl\{ 0\le t<\infty ;\,  \frac{4A\alpha \cos \theta } {4-N\alpha } \int _0^t e^{\frac {4\alpha \gamma s} {4-N\alpha }} ds <  \| \DI \|^{-\frac {4\alpha } {4-N\alpha }} \Bigr\}
\end{equation} 
then it follows from the blowup alternative on the $L^2$-norm in Proposition~\ref{eLocGL1} that
$\Tma ^\theta \ge \tau $.
The result follows by calculating the integral in~\eqref{eGlob1:3} in the various cases $\gamma >0$, $\gamma =0$ and $\gamma <0$.
\end{proof} 

\begin{rem} \label{eRem1} 
Here are some comments on Theorem~\ref{eGlob1}.
\begin{enumerate}[{\rm (i)}] 

\item \label{eRem1:1} 
Suppose $\gamma \ge 0$. 
It follows from~\eqref{eGlob1:2} that the upper estimate~\eqref{eBUg1:2}  of $\Tma^\theta $ established in Theorem~\ref{eBUg1} is optimal with respect to the dependence in $\theta $. Indeed, if $\Ene ( \DI ) <0$, then it follows from~\eqref{eGlob1:2} and~\eqref{eBUg1:2} that 
\begin{equation*} 
0 < \liminf  _{ \theta \to \frac {\pi } {2} } \, \phi ( \theta ) \Tma^\theta  \le  \limsup  _{ \theta \to \frac {\pi } {2} }  \, \phi (\theta ) \Tma^\theta  <\infty 
\end{equation*} 
with $\phi (\theta ) = \cos \theta $ for $\gamma =0$ and $\phi (\theta )= [\log ((\cos\theta )^{-1})]^{-1}$ for $\gamma >0$. 

\item \label{eRem1:2} 
Suppose $\gamma <0$. It follows from~\eqref{eGlob1:2} that, given any initial value $\DI\in \Cz \cap H^1 (\R^N ) $, the corresponding solution of~\eqref{GL} is global for all $\theta $ sufficiently close to $\frac {\pi } {2}$.

\item \label{eRem1:3} 
We can apply Theorem~\ref{eGlob1} to equation~\eqref{GL2}.
The upper estimate~\eqref{eGlob1:2},  together with formulas~\eqref{fNeg2} and~\eqref{fNeg4}, shows that if
 $\DIb  \in \Cz \cap H^1 (\R^N ) $ and $v$ is the corresponding solution of~\eqref{GL2} defined on the maximal interval $[0, \Sma )$, then $\Sma = \infty $ if $A   \| \DIb \| _{ L^2 }^{ \frac { 4\alpha } {4-N\alpha } } \le 1$,
and 
\begin{equation} \label{fETM2}
\Sma \ge  - \frac {4-N\alpha} {4\alpha \cos \theta  } \log \Bigl( 1-  \frac { 1  } { A   \| \DIb \| _{ L^2 }^{ \frac { 4\alpha } {4-N\alpha } } } \Bigr) 
\end{equation}
if $A   \| \DIb \| _{ L^2 }^{ \frac { 4\alpha } {4-N\alpha } } > 1$.
Suppose now that  either $ \Ene _{ -1 }  ( \DIb ) \le 0$ and $\DIb \not = 0$ or else $0<  \Ene _{ -1 } ( \DIb ) <  \Ene _{ -1 } (  \Gst _{ -1 } ) $ and $ \Neh _{ -1 } ( \DIb )<0$.
In both cases $ \Neh _{ -1 } ( \DIb )<0$, and it follows from~\eqref{fGNb}  that
\begin{equation*}
 \| \nabla \DIb \| _{ L^2 }^2 + \|  \DIb \| _{ L^2 }^2 <  \| \DIb \| _{ L^{\alpha +2} }^{\alpha +2}
\le \| \nabla \DIb \| _{ L^2 }^2 + A   \| \DIb \| _{ L^2 }^{2 + \frac {4\alpha } {4-N\alpha }} .
\end{equation*}
In particular, $A   \| \DIb \| _{ L^2 }^{ \frac { 4\alpha } {4-N\alpha } } > 1$ so that
\begin{equation*} 
0 < \liminf  _{ \theta \to \frac {\pi } {2} } \, ( \cos \theta )\Sma^\theta \le  \limsup  _{ \theta \to \frac {\pi } {2} } \,  ( \cos \theta )  \Sma^\theta  <\infty 
\end{equation*} 
by~\eqref{fETM2}, and either~\eqref{fES1} or~\eqref{eGN3:1}.

\end{enumerate} 
\end{rem} 

\subsubsection{The case  $\alpha >\frac {4} {N}$} \label{sGN}

If $ \frac {4} {N} \le \alpha < \frac {4} {N-2}$, then the solution of the limiting nonlinear Schr\"o\-din\-ger equation~\eqref{NLS} blows up in finite time, under appropriate assumptions on the initial value $\DI$. See Theorems~\ref{eNLS2}, \ref{eNLS3} and~\ref{eNLS2b}.
Under these assumptions, one might expect that $\Tma ^\theta (\DI )$, which is finite
(under suitable assumptions) by Theorem~\ref{eBUg1:1}, remains bounded as  $\theta \to  \frac {\pi } {2}$.
It appears that no complete answer is known to this problem. 

We first consider the case $\gamma \ge 0$, for which one can give a partial answer. If $\Ene ( \DI) <0$, then $\Tma ^\theta <\infty $ for all $0\le \theta <\frac {\pi } {2}$ by Theorem~\ref{eBUg1:1}. However, the bound in~\eqref{eBUg1:2} blows up as $\theta \to \frac {\pi } {2}$. The proof of~\eqref{eBUg1:2} is based on Levine's argument for blowup in the nonlinear heat equation~\eqref{NLH}. Since, as observed before,  Levine's argument does not apply to the limiting nonlinear Schr\"o\-din\-ger equation, it is not surprising that the bound in~\eqref{eBUg1:2} becomes inaccurate as $\theta \to \frac {\pi } {2}$. This observation suggests to adapt the proof of blowup for~\eqref{NLS} to equation~\eqref{GL}, in order to obtain a  bound on $\Tma ^\theta $ as $\theta \to \frac {\pi } {2}$. 
This strategy proved to be successful in~\cite[Theorem~1.5]{CazenaveDW} for $\gamma =0$, and it can be extended to the case $\gamma \ge 0$. More precisely, we have the following result.

\begin{thm} \label{eGGL1bu} 
Let $\frac {4} {N} \le \alpha \le 4$, $\gamma \ge 0$ and $N\ge 2$. 
Assume that $N\ge 3$ if $\gamma >0$.
Let $\DI\in H^1 (\R^N ) \cap \Cz$ be radially symmetric and, given any $0\le \theta < \frac {\pi } {2}$,
let $u^\theta$ be the corresponding  solution of~\eqref{GL} defined on the maximal interval $[0, \Tma^\theta )$.
If $\Ene (\DI)<0$, then   $\sup _{ 0\le \theta <\frac {\pi } {2} }\Tma ^\theta <\infty $. 
\end{thm}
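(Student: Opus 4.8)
The plan is to transport the truncated-variance (Ogawa--Tsutsumi) argument of Theorem~\ref{eNLS3} from the nonlinear Schr\"odinger equation to~\eqref{GL}, keeping every estimate uniform in $\theta$ so that the resulting bound on $\Tma^\theta$ does not degenerate as $\theta \uparrow \frac{\pi}{2}$. First I would remove the zeroth-order term by setting $v = e^{-\gamma t} u^\theta$, which satisfies $v_t = e^{i\theta}[\Delta v + e^{\alpha\gamma t}|v|^\alpha v]$ with $v(0)=\DI$ and has the same maximal existence time $\Tma^\theta$. Working with the time-dependent functionals $\widetilde{e}$ and $\widetilde{\jmath}$ of~\eqref{fNot1}, the energy identity~\eqref{fPos7} from the proof of Theorem~\ref{eBUg1} gives $\frac{d}{dt}\widetilde{e} - \alpha\gamma\widetilde{e} \le -\cos\theta\int_{\R^N}|v_t|^2 \le 0$, hence $\widetilde{e}(t) \le e^{\alpha\gamma t}\Ene(\DI) < 0$ for all $t$, uniformly in $\theta$ since the right-hand side does not involve $\theta$. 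Combined with $\widetilde{\jmath} = (\alpha+2)\widetilde{e} - \frac{\alpha}{2}\int_{\R^N}|\nabla v|^2 \le (\alpha+2)\widetilde{e}$, this supplies the negative driving quantity that powers the whole argument.

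Next I would introduce the truncated variance $\zeta(t) = \int_{\R^N}\GVar|v|^2$ for a spherically symmetric weight $\GVar$ from Lemma~\ref{eOgau}, and apply the general variance identities~\eqref{fVarug1}--\eqref{fVaruqg2} of Proposition~\ref{eGGLzuu} to compute $\zeta'$ and $\zeta''$ for $0\le\theta<\frac{\pi}{2}$. The point is that this identity splits $\zeta''$ into a principal part that reduces, as $\theta\uparrow\frac{\pi}{2}$, to the Schr\"odinger expression bounded in~\eqref{fOT2} (itself controlled by the radial Gagliardo--Nirenberg estimates of Lemma~\ref{eOgau}, where the hypothesis $\alpha\le 4$ enters, together with the negativity of $\widetilde{e}$), plus correction terms carrying a factor $\cos\theta$ and a non-positive dissipative contribution from the parabolic part. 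Since the leading term $2N\alpha\, e^{\alpha\gamma t}\Ene(\DI)$ depends only on $\DI$, it is negative uniformly in $\theta$; the whole task is to absorb the $\theta$-corrections into it.

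For $\gamma=0$ I would fix $\varepsilon$ small as in the first part of the proof of Theorem~\ref{eNLS3}, so that Lemma~\ref{eOgau} yields $\zeta''\le 2N\alpha\,\Ene(\DI)$ up to correction terms bounded above by a constant times $\cos\theta$; hence $\zeta''\le N\alpha\,\Ene(\DI)<0$ for all $\theta$ (with room to spare near $\frac{\pi}{2}$, and a crude bound away from it). As $\zeta(0)\le\|\GVar\|_{L^\infty}\|\DI\|_{L^2}^2$ and $\zeta'(0)$ are bounded uniformly in $\theta$ while $\zeta\ge0$, the inequality forces $\Tma^\theta$ below a $\theta$-independent value. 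For $\gamma>0$ and $N\ge3$ I would instead use the $\tau$-dependent weight $\GVar_{\varepsilon_\tau}$ of~\eqref{fOT3}--\eqref{fOT5}, integrate the analogue of~\eqref{fOT7} twice as in~\eqref{fOT10}--\eqref{fOT13}, and observe that because $\Ene(\DI)<0$ with $\max\{2\lambda,1-\delta\}<1$ for the exponents $\lambda$ of~\eqref{fOT4} and $\delta$ of~\eqref{fDdel}, the right-hand side becomes negative for $\tau$ large, the threshold being uniform in $\theta$ once the $\cos\theta$-corrections are shown to respect the same powers of $\Avoir_\tau$ and $\varepsilon_\tau$.

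The main obstacle is precisely this last point: controlling the genuinely new terms that appear for $\theta<\frac{\pi}{2}$ and vanish in the pure Schr\"odinger limit. These involve $\cos\theta\int_{\R^N}\GVar''|\nabla v|^2$-type quantities and the time derivative of the momentum density, for which neither a pointwise sign nor a conservation law is available. The resolution I expect is that each such term carries either an explicit factor $\cos\theta$ or the non-positive dissipation $-\cos\theta\int_{\R^N}|v_t|^2$, so that after invoking Lemma~\ref{eOgau} they are dominated---uniformly in $\theta$, and in the $\gamma>0$ case uniformly in $\tau$---by the $\theta$-free negative quantity $e^{\alpha\gamma t}\Ene(\DI)$; verifying this domination with the correct exponents in $\Avoir_\tau$ and $\varepsilon_\tau$ is the one piece of delicate bookkeeping in the proof.
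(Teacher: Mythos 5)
Your overall strategy---transporting the truncated-variance argument of Theorem~\ref{eNLS3} to~\eqref{GL} via Proposition~\ref{eGGLzuu} and Lemma~\ref{eOgau}, after the change of variable $v=e^{-\gamma t}u^\theta$---is indeed the one used in the text, and you correctly isolate the obstruction: the correction terms that vanish at $\theta=\frac{\pi}{2}$. But the resolution you propose for them (``each such term carries either an explicit factor $\cos\theta$ or a non-positive dissipation, hence is dominated by the $\theta$-free negative quantity'') does not work, and the device that actually closes the argument is missing from your proposal. The entire Ogawa--Tsutsumi machinery rests on an a priori $L^2$ bound: inequality~\eqref{eOgau:u} of Lemma~\ref{eOgau} is only valid for $\|u\|_{L^2}\le \Granda$, and in Theorem~\ref{eNLS3} this is supplied by the conservation law~\eqref{fNLS23b1}. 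For equation~\eqref{GL2b} there is no such bound: on the contrary, since $\Ene(\DI)<0$, identity~\eqref{fPos6} shows that $\|v^\theta(t)\|_{L^2}$ is strictly \emph{increasing}, with no a priori control, so Lemma~\ref{eOgau} cannot be applied with a fixed $\Granda$ on all of $[0,\Tma^\theta)$. The same difficulty sinks your treatment of the correction term: after the double integration in time, $\cos\theta\,\frac{d}{dt}\Kcal^\theta$ contributes $\cos\theta\int_0^t \Kcal^\theta$, and $\Kcal^\theta$ contains the \emph{positive} nonlinear term $\frac{\alpha+4}{\alpha+2}\,e^{\alpha\gamma s}\int\GVar|v^\theta|^{\alpha+2}$. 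A prefactor $\cos\theta$ does not make this harmless: at $\theta=0$ it is a full-strength term, and near $\theta=\frac{\pi}{2}$ the smallness of $\cos\theta$ is offset by the possible growth of the solution. In particular, your claimed pointwise inequality $\zeta''\le N\alpha\,\Ene(\DI)<0$ in the case $\gamma=0$ is not obtainable along these lines.

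What the proof in the text does, and what your proposal lacks, is to combine the variance argument with Levine's concavity argument through an $L^2$ exit time. One defines $\tau_\theta$ by~\eqref{fPRFu1} as the first time $\|v^\theta\|_{L^2}^2$ reaches $K\|\DI\|_{L^2}^2$; Lemma~\ref{eGGLq}, whose proof is Levine's argument ($\widetilde{M}\widetilde{M}''\ge\frac{\alpha+4}{4}(\widetilde{M}')^2$ past $\tau_\theta$, see~\eqref{feGGLq:c}), then gives $\Tma^\theta\le\frac{\alpha+4}{\alpha}\tau_\theta$, so that it suffices to bound $\tau_\theta$ uniformly in $\theta$. On $[0,\tau_\theta]$ one has the uniform bound~\eqref{fPRFd1}, which is what legitimizes Lemma~\ref{eOgau} there; and this same bound controls the correction term, because integrating~\eqref{fPos6} in time identifies $\cos\theta\int_0^t e^{\alpha\gamma s}\int|v^\theta|^{\alpha+2}$, up to sign-definite contributions, with the increment of $\|v^\theta\|_{L^2}^2$, hence it is at most a multiple of $(K-1)\|\DI\|_{L^2}^2$ on $[0,\tau_\theta]$; this is estimate~\eqref{fPRFh1}. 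So the mechanism is not smallness of $\cos\theta$, nor domination by the dissipation, but the pairing of the factor $\cos\theta$ with the $L^2$-growth identity, available only up to the exit time. Without Lemma~\ref{eGGLq} and the exit time $\tau_\theta$, your plan can close neither the application of Lemma~\ref{eOgau} nor the estimate of the $\Kcal^\theta$-terms, in either the case $\gamma=0$ or $\gamma>0$.
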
 

We prove Theorem~\ref{eGGL1bu} by following the strategy of~\cite{CazenaveDW}. We adapt the proof of Theorem~\ref{eNLS3}, and in particular we consider $v(t)= e^{- \gamma t} u(t)$, which satisfies equation~\eqref{GL2b}.
The corresponding identities for the truncated variance are given by Proposition~\ref{eGGLzuu}; and the Caffarelli-Kohn-Nirenberg estimate by Lemma~\ref{eOgau}. 
The terms involving $\cos \theta $ in~\eqref{fVaruqg2}  can be controlled using~\eqref{fPos6}.
Yet there is a major difference between equations~\eqref{NLS} and~\eqref{GL} that must be taken care of. For~\eqref{NLS}, the $L^2$-norm of the solutions is controlled by formula~\eqref{fSL1}.  
The resulting estimate for $v$ is essential when applying Lemma~\ref{eOgau}. For~\eqref{GL}, there is no such  {\it a priori}  estimate. However, one can estimate $v$ on an interval $[0,T]$ where $T$ is proportional to $\Tma^\theta $.
More precisely, we have the following result, similar to~\cite[Lemma~5.2]{CazenaveDW}.

\begin{lem}\label{eGGLq} 
Fix $0\le \theta  < \frac {\pi } {2}$. 
Let $\DI \in \Cz \cap H^1 (\R^N ) $, and consider the 
corresponding solution $v $ of~\eqref{GL2b} defined on the maximal interval $[0, \Tma)$.
 Set
\begin{equation} \label{feGGLq:u} 
\tau = \sup \{ t\in [0,\Tma);\,  \|v(s)\| _{ L^2 }^2 \le K  \| \DI \| _{ L^2 }^2  \text{ for }0\le s\le t  \},
\end{equation} 
where 
\begin{equation} \label{feGGLq:d1} 
K=  \Bigl[ 1 -  \Bigl( \frac {\alpha +4} {2\alpha +4} \Bigr)^{\frac 12} \Bigr] ^{-1} >1
\end{equation} 
 so that $0< \tau \le \Tma$. 
 If $\Ene (\DI) < 0$, then $\Tma \le  \frac {\alpha +4} {\alpha } \tau  $.
\end{lem}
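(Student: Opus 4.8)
The plan is to run a Levine-type concavity argument on the weighted time-integral of the $L^2$ norm, the crucial point being to arrange the constants so that the argument is uniform in $\theta$. Throughout I use the notation $\widetilde{f}=\|v\|_{L^2}^2$, $\widetilde{\jmath}$, $\widetilde{e}$ attached to \eqref{GL2b}, together with the identities \eqref{fPos6b}, \eqref{fPos6}, \eqref{fPos7} and the inequality \eqref{eNLH4:8}. First I record the qualitative facts: integrating \eqref{fPos7} and using $\gamma\ge 0$ gives $\widetilde{e}(t)\le e^{\alpha\gamma t}\Ene(\DI)<0$, and then \eqref{eNLH4:8} forces $\widetilde{\jmath}(t)\le(\alpha+2)\widetilde{e}(t)<0$, so by \eqref{fPos6} the function $\widetilde{f}$ is strictly increasing. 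Consequently $\tau$ in \eqref{feGGLq:u} is the first time at which $\widetilde{f}(\tau)=K\|\DI\|_{L^2}^2$ whenever $\tau<\Tma$; if instead $\tau=\Tma$ the conclusion is trivial since $\frac{\alpha+4}{\alpha}>1$, so from now on I assume $\tau<\Tma$.

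Introduce $M(t)=\frac12\int_0^t\widetilde{f}(s)\,ds$, so that $M'(t)=\frac12\widetilde{f}(t)$ and $M''(t)=-\cos\theta\,\widetilde{\jmath}(t)=\Re\int_{\R^N}\overline{v}v_t$. The heart of the proof is the $\theta$-free differential inequality
\[
M(t)\,M''(t)\ge \frac{\alpha+2}{2}\bigl(M'(t)-M'(0)\bigr)^2 .
\]
To obtain it I first bound, using $\widetilde{\jmath}\le(\alpha+2)\widetilde{e}$ together with the integrated form of \eqref{fPos7} (and $-\widetilde{e}(0)>0$), the dissipation $M''(t)\ge(\alpha+2)\cos^2\theta\int_0^t\|v_t\|_{L^2}^2$. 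The delicate step, and the main obstacle, is that this bound carries a factor $\cos^2\theta$, which would make a naive estimate $\theta$-dependent and useless as $\theta\to\frac\pi2$. The remedy is to apply Cauchy--Schwarz to the \emph{modulus} rather than to the real part: since $|\int\overline{v}v_t|=|\widetilde{\jmath}|=M''/\cos\theta$ by \eqref{fPos6b}, one gets $\|v\|_{L^2}\|v_t\|_{L^2}\ge M''/\cos\theta$, and Cauchy--Schwarz in time yields $\bigl(\int_0^t\|v\|_{L^2}^2\bigr)\bigl(\int_0^t\|v_t\|_{L^2}^2\bigr)\ge\cos^{-2}\theta\,(M'(t)-M'(0))^2$. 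Multiplying the dissipation bound by $M=\frac12\int_0^t\|v\|_{L^2}^2$ and inserting this, the two powers of $\cos\theta$ cancel exactly, leaving the displayed inequality.

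Finally I localize to $[\tau,\Tma)$. There $M'(t)\ge M'(\tau)=K\,M'(0)$, so $M'(t)-M'(0)\ge(1-\frac1K)M'(t)$, and by the choice \eqref{feGGLq:d1} one has $(1-\frac1K)^2=\frac{\alpha+4}{2(\alpha+2)}$; hence $M\,M''\ge\frac{\alpha+4}{4}(M')^2$ on $[\tau,\Tma)$, which is exactly the condition that $\Psi:=M^{-\alpha/4}$ be concave there. Since $\Psi>0$ and $\Psi'(\tau)=-\frac\alpha4 M(\tau)^{-\alpha/4-1}M'(\tau)<0$, the tangent line at $\tau$ lies above $\Psi$ and vanishes at $t^*=\tau+\Psi(\tau)/(-\Psi'(\tau))=\tau+\frac{4M(\tau)}{\alpha M'(\tau)}$; as $\Psi$ cannot reach $0$ while $M$ stays finite, this forces $\Tma\le t^*$. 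It remains to observe that, $\widetilde{f}$ being nondecreasing, $\frac{M(\tau)}{M'(\tau)}=\frac{\int_0^\tau\widetilde{f}}{\widetilde{f}(\tau)}\le\tau$, whence $\Tma\le\tau+\frac4\alpha\tau=\frac{\alpha+4}{\alpha}\tau$, as claimed.
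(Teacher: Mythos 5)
Your proof is correct and follows essentially the same route as the paper: Levine's concavity argument applied to $\widetilde{M}(t)=\frac12\int_0^t\|v\|_{L^2}^2$, with the dissipation bound $\widetilde{M}''\ge(\alpha+2)\cos^2\theta\int_0^t\|v_t\|_{L^2}^2$ from the integrated energy inequality, the $\cos\theta$-cancellation obtained by estimating the modulus $|\int\overline{v}v_t|=-\widetilde{\jmath}=\widetilde{M}''/\cos\theta$ rather than only its real part, and the choice of $K$ turning the inequality into $\widetilde{M}\widetilde{M}''\ge\frac{\alpha+4}{4}(\widetilde{M}')^2$ on $[\tau,\Tma)$, i.e.\ concavity of $\widetilde{M}^{-\alpha/4}$ there. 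Your tangent-line formulation of the final step and the bound $\widetilde{M}(\tau)/\widetilde{M}'(\tau)\le\tau$ are exactly the paper's conclusion in slightly different clothing.
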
 

\begin{proof} 
We use the notation of the proof of Theorem~\ref{eBUg1:1}, and in particular~\eqref{fNot1}. 
The proof is based Levine's argument used in the proof of Theorem~\ref{eNLH1}, which shows that if $\| v \| _{ L^2 }^2$ achieves the value $K  \| \DI \| _{ L^2 }^2$ at a certain time $t $, then $v$ must blow up within a lapse of time which is controlled by $t $. More precisely, let $\tau $ be given by~\eqref{feGGLq:u}. 
If $\tau =\Tma$, there is nothing to prove, so we assume $\tau <\Tma$. It follows that
$\| v (\tau )\| _{ L^2 }^2= K  \| \DI \| _{ L^2 }^2$, so that
\begin{equation} \label{feGGLq:t} 
  \widetilde{f} (t)  \le   \widetilde{f} (\tau )  = K  \widetilde{f} (0) ,\quad 0\le t\le \tau .
\end{equation} 
Since $\Ene ( \DI) < 0$, it follows (see the proof of Theorem~\ref{eBUg1:1}) that $ \widetilde{f} $ is nondecreasing on $[0, \Tma)$; and so, using~\eqref{feGGLq:t},
\begin{equation} \label{feGGLq:q} 
 \widetilde{f} (t) \ge  K  \widetilde{f} (0) ,\quad \tau \le t<\Tma. 
\end{equation} 
We deduce from~\eqref{fPos7} that
\begin{equation*} 
e^{-\alpha \gamma t}  \widetilde{e} (t) \le \Ene (\DI) - \cos \theta \, \int _0 ^t e^{-\alpha \gamma s }  \int  _{ \R^N  }  | v _t (s) |^2 
\end{equation*} 
so that
\begin{equation} \label{feGGLq:qb1} 
 \widetilde{e} (t) \le  - \cos \theta \, \int _0 ^t   \int  _{ \R^N  }  | v _t  |^2.
\end{equation} 
Since $  \frac {d\widetilde{f}} {dt} \ge -2 \cos \theta \,  \widetilde{\jmath} \ge   -2 (\alpha +2)  \cos \theta \,  \widetilde{e}$
by~\eqref{fPos6}, we deduce from~\eqref{feGGLq:qb1} that
\begin{equation} \label{fGGLud}
\frac {d\widetilde{f}} {dt}   \ge   
2(\alpha +2) \cos^2 \theta  \int _0^t \int _{\R^N }  | v_t|^2.
\end{equation}
Set 
\begin{equation} \label{fGGLut}
 \widetilde{M} (t)= \frac {1} {2} \int _0^t   \widetilde{f}(s) \,ds .
\end{equation} 
It follows from~\eqref{fGGLud} and Cauchy-Schwarz's inequality that (see the proof of Theorem~\ref{eNLH1})
\begin{equation}  \label{fGGLuq}
 \widetilde{M}   \widetilde{M} '' \ge  \frac {\alpha +2} {2} \cos ^2 \theta   \Bigl( \int _0^t \Bigl| \int _{\R^N } v_t  \overline{v}  \Bigr| \Bigr)^2 .
\end{equation} 
Since $  \widetilde{\jmath}  \le (\alpha +2)  \widetilde{e}  \le 0$, 
identities~\eqref{fPos6b} and~\eqref{fPos6} yield
\begin{equation*}
\Bigl| \int _{\R^N } v_t  \overline{v}  \Bigr| = -  \widetilde{\jmath} 
=  \frac {1} {2\cos \theta } \frac {d\widetilde{f}} {dt}  
=  \frac {1} {\cos \theta }  \widetilde{M} '' (t) 
\end{equation*} 
so that~\eqref{fGGLuq} becomes
\begin{equation} \label{fGGLuqbd}
 \widetilde{M}   \widetilde{M} '' \ge \frac {\alpha +2} {2} ( \widetilde{M} '(t) - \widetilde{M} '(0))^2
 =  \frac {\alpha +2} {8} ( \widetilde{f} (t) - \widetilde{f} (0))^2.
\end{equation} 
It follows from~\eqref{fGGLuqbd}, \eqref{feGGLq:q} and~\eqref{feGGLq:d1}   that
\begin{equation} \label{feGGLq:c}
 \widetilde{M}   \widetilde{M} '' \ge \frac{\alpha +2} {8}  \Bigl( \frac {K-1} {K}  \Bigr)^2    \widetilde{f}  (t) ^2 = \frac{\alpha +4} {16}  \widetilde{f}  (t) ^2 = \frac{\alpha +4} {4} [  \widetilde{M} '(t)]^2
\end{equation} 
for all $\tau \le t< \Tma$. This means that $( \widetilde{M} ^{-\frac {\alpha } {4} }) '' \le 0$ on $[\tau ,\Tma)$; and so
\begin{equation*} 
 \widetilde{M} (t)^{-\frac {\alpha } {4}} \le  \widetilde{M} (\tau )^{-\frac {\alpha } {4}} + (t-\tau ) ( \widetilde{M} ^{-\frac {\alpha } {4}})'(\tau )
=  \widetilde{M} (\tau )^{-\frac {\alpha } {4}}  \Bigl[ 1- \frac {\alpha } {4}(t-\tau )  \widetilde{M} (\tau )^{-1}  \widetilde{M} '(\tau ) \Bigr]
\end{equation*} 
for $\tau \le t   < \Tma$. Since $ \widetilde{M} (t)^{-\frac {\alpha } {4}}> 0$, we deduce that for every $\tau \le t<\Tma$,
$ \frac {\alpha } {4}(t-\tau )  \widetilde{M} (\tau )^{-1}  \widetilde{M} '(\tau ) \le 1$, i.e.,
\begin{equation} 
(t-\tau )   \widetilde{f} (\tau ) \le \frac {4} {\alpha } \int _0^\tau   \widetilde{f} (s)\,  ds \le  \frac {4} {\alpha } \tau   \widetilde{f} (\tau ) 
\end{equation} 
where we used~\eqref{feGGLq:t} in the last inequality.
Thus $t\le \frac {\alpha +4} {\alpha } \tau $ for all $\tau \le t< \Tma$, which proves the desired inequality.
\end{proof} 

\begin{proof} [Proof of Theorem~$\ref{eGGL1bu}$]
We set $v^\theta (t)= e^{- \gamma t} u^\theta (t)$, thus $v^\theta $ is the solution of~\eqref{GL2b} on $[0, \Tma^\theta )$. We let $K$ be defined by~\eqref{feGGLq:d1} and we set 
\begin{equation} \label{fPRFu1} 
\tau _\theta = \sup \{ t\in [0,\Tma^\theta );\,  \|v^\theta (s)\| _{ L^2 }^2 \le K  \| \DI \| _{ L^2 }^2  \text{ for }0\le s\le t  \} .
\end{equation} 
Therefore, 
\begin{equation}  \label{fPRFd1} 
\sup  _{ 0\le \theta <\frac {\pi } {2} }\sup  _{ 0\le t<\tau _\theta  }  \|v^\theta (t)\| _{ L^2 }^2 \le K \| \DI \| _{ L^2 }^2
\end{equation} 
and, by Lemma~\ref{eGGLq},
\begin{equation}  \label{fPRFt1} 
\Tma ^\theta  \le  \frac {\alpha +4} {\alpha } \tau _\theta  
\end{equation} 
so that we only need a bound on $\tau _\theta $.
We first derive an inequality (\eqref{fVarugz3} below) by calculating a truncated variance.
Let $\GVar \in C^\infty (\R^N ) \cap W^{4, \infty } (\R^N ) $ be  real-valued, nonnegative, and radially symmetric. We set
\begin{align*} 
\zeta _\theta (t) & = \int  _{ \R^N  } \GVar (x)  |v^\theta (t,x)|^2 dx \\
\Hcal ^\theta (t) & =  \int  _{ \R^N  } \Bigl\{  - 2(2- \GVar '' ) |v^\theta _r|^2  + \frac {\alpha}{\alpha+2} e^{\alpha \gamma t} (2N - \Delta  \GVar ) |v^\theta |^{\alpha +2} 
  -\frac {1}{2}  |v^\theta |^2 \Delta^2  \GVar \Bigr\} \\
\Kcal ^\theta (t) & =   \int _{ \R^N  } \Bigl\{ - 2  \GVar   | v^\theta  _r |^2+ \frac {\alpha +4} {\alpha +2} e^{\alpha \gamma  t} \GVar   |v^\theta |^{\alpha +2}  +  |v^\theta |^2 \Delta  \GVar  \Bigr\} 
\end{align*} 
and we observe that
\begin{equation} \label{fVarugz2} 
- \Kcal ^\theta (0) \le C (  \| \GVar \| _{ L^\infty  } + \| \Delta \GVar \| _{ L^\infty  } )  \| \DI \| _{ H^1 }^2 .
\end{equation} 
We apply Proposition~\ref{eGGLzuu} with $f(t)\equiv e^{\alpha \gamma  t}$. It follows from~\eqref{fVarug1} that
\begin{equation} \label{fVarug} 
\frac {1} {2} \zeta _\theta ' (0) \le C (  \| \GVar \| _{ L^\infty  } +  \| \nabla \GVar \| _{ L^\infty  } + \| \Delta \GVar \| _{ L^\infty  } )( 1 + \| \DI \| _{ H^1 }^{\alpha +2} ) 
\end{equation} 
and from~\eqref{fVaruqg2}  that
\begin{equation} \label{fVarugz1} 
\begin{split} 
\frac {1} {2} \zeta_\theta  '' \le &
 -\frac {1}{2} \int _{ \R^N  } |v^\theta |^2  \Delta^2  \GVar - \frac {\alpha e^{\alpha \gamma  t}}{\alpha+2}  \int _{ \R^N  }  |v^\theta |^{\alpha +2} \Delta  \GVar 
 +2 \int _{ \R^N  } \GVar '' |v^\theta _r|^2
\\ & + \cos \theta   \frac {d} {dt} \Kcal^\theta   .
\end{split}  
\end{equation} 
Using the identity
\begin{multline*} 
  -\frac {1}{2} \int _{ \R^N  } |v^\theta |^2  \Delta^2  \GVar - \frac {\alpha e^{\alpha \gamma  t}}{\alpha+2}  \int _{ \R^N  }   |v^\theta |^{\alpha +2} \Delta  \GVar   +2 \int _{ \R^N  } \GVar '' |v^\theta _r|^2 \\  = 2N\alpha  \widetilde{e} (t)  + \Hcal ^\theta (t)  - (N\alpha -4) \int  _{ \R^N  }  |v^\theta _r|^2
\end{multline*} 
where $\widetilde{e} (t) $ is defined by~\eqref{fNot1}, the estimate
$  \widetilde{e} (t) \le e^{\alpha \gamma t} \Ene (\DI)$ by~\eqref{fPos7}, 
and  the assumption $N\alpha \ge 4$,  we deduce from~\eqref{fVarugz1} that
\begin{equation} \label{fVaruqgb} 
\frac {1} {2} \zeta _\theta ''  \le  2N\alpha  e^{\alpha \gamma t} \Ene (\DI) + \Hcal^\theta  (t) 
+ \cos \theta  \frac {d} {dt} \Kcal^\theta  .
\end{equation} 
Integrating twice the above inequality, and since $\zeta _\theta \ge 0$, we obtain
\begin{equation} \label{fVarugz3} 
\begin{split} 
0 \le & \, \tau _\theta  \, \Bigl( \frac {1} {2}  \zeta _\theta ' (0) - \cos \theta  \Kcal ^\theta (0) \Bigr) 
+ 2N\alpha   \Ene (\DI) \int _0^{\tau _\theta } \int _0^s e^{\alpha \gamma \sigma } d\sigma ds \\
& +  \int _0^{\tau _\theta } \int _0^s  \Hcal^\theta  (\sigma )\, d\sigma ds + \cos \theta \int _0^{\tau _\theta } \Kcal^\theta (s) \, ds .
\end{split} 
\end{equation} 
We derive a bound on $\tau _\theta $ from~\eqref{fVarugz3}. 
In order to do so we show that, for large time, the dominating term in the right-hand side is the middle one, which is negative. 

We first obtain an estimate of the last term in~\eqref{fVarugz3}, for which the factor $\cos \theta $ is essential. 
Indeed, it follows from~\eqref{fPos6} that
\begin{equation*} 
\frac {d} {dt} \int _{ \R^N  } |v^\theta |^2 
= 2 \cos \theta   \Bigl( -2\widetilde{e} (t) + \frac {\alpha } {\alpha +2} e^{\alpha \gamma t} \int  _{ \R^N  }  | v^\theta |^{\alpha +2} \Bigr) 
\end{equation*} 
where $\widetilde{e} (t) $ is defined by~\eqref{fNot1}. Since $\widetilde{e} (t) \le 0$ by~\eqref{fPos7}, we deduce by integrating on $(0,\tau _\theta )$ and applying~\eqref{fPRFd1} that
\begin{equation*}
\frac {2\alpha } {\alpha +2} \cos \theta  \int _0^t e^{\alpha \gamma s} \int  _{ \R^N  }  |v^\theta |^{\alpha +2}\le     (K-1)  \|  \DI \| _{ L^2 }^2 
\end{equation*} 
for all $0\le t\le \tau _\theta $. It follows that
\begin{equation}  \label{fPRFh1}
 \cos \theta \int _0^{\tau _\theta } \Kcal^\theta (s) \, ds \le  C (  \| \GVar \| _{ L^\infty  } + \| \Delta \GVar \| _{ L^\infty  } )  \|  \DI \| _{ L^2 }^2
\end{equation} 
where we used again~\eqref{fPRFd1} to estimate the factor of $\Delta \GVar$.

We conclude by estimating the term involving $\Hcal^\theta $ in~\eqref{fVarugz3} with Lemma~\ref{eOgau}.
We first consider the case $\gamma =0$. We apply Lemma~\ref{eOgau} with $A=  \| \DI \| _{ L^2 }$,  $\Avoir = \frac {\alpha } {\alpha +2}$ and $\varepsilon >0$ chosen sufficiently small so that $ \Const    \Avoir  \varepsilon ^ {2(N-1) } <1$ and $\kappa ( \mu , \varepsilon  ) \le -N\alpha  \Ene ( \DI)$. With $\GVar = \GVar_\varepsilon $ given by Lemma~\ref{eOgau}, it follows from~\eqref{eOgau:u} that $\Hcal^\theta (t) \le -N\alpha  \Ene ( \DI)$ for all $0\le t<\tau _\theta $. 
Therefore, we deduce from~\eqref{fVarugz3} and~\eqref{fPRFh1}
\begin{equation*} 
0 \le  \, \tau _\theta  \, \Bigl( \frac {1} {2}  \zeta _\theta ' (0) - \cos \theta  \Kcal ^\theta (0) \Bigr) 
+ N\alpha  \frac {(\tau _\theta )^2} {2} \Ene (\DI) .
\end{equation*} 
Since $ \Ene (\DI) <0$, we conclude that $\sup _{ 0\le \theta <\frac {\pi } {2} }\tau _\theta <\infty $.

We next consider the case $\gamma >0$ (and so $N\ge 3$). We apply Lemma~\ref{eOgau}, this time with  with $A=  \| \DI \| _{ L^2 }$ and
\begin{equation} \label{fEC1} 
\Avoir = \Avoir _\theta = e^{\alpha \gamma \tau _\theta } .
\end{equation} 
The additional difficulty with respect to the case $\gamma =0$ is that $ \Avoir _\theta $ may, in principle, be large.
We fix $\lambda $ satisfying~\eqref{fOT4} (we use the assumption $N\ge 3$) and we set 
\begin{equation}  \label{fEC2} 
\varepsilon _\theta = a  \Avoir _\theta ^{-\lambda } \le a . 
\end{equation} 
Here, $0<a \le 1$ is 
chosen sufficiently small so that $ \Const  a ^ {2(N-1) } <1$, where  $\Const  $ is the constant in Lemma~\ref{eOgau}. Since $\Avoir_\theta  \ge 1$ and $1-2(N-1)\lambda <0$ by~\eqref{fOT4}, it follows in particular that $ \Const    \Avoir _\theta  \varepsilon _\theta  ^ {2(N-1) }=  \Const   \Avoir _\theta ^{1- 2(N-1) \lambda } a   ^ {2(N-1) }    \le  \Const  a   ^ {2(N-1) }   <1$. Moreover, we deduce from~\eqref{fOT4} that $\kappa $ defined by~\eqref{eOgau:u1} satisfies $\kappa (\Avoir_\theta  ,\varepsilon _\theta ) \le C \Avoir _\theta  ^{1- \delta }$, where $C$ is independent of $\theta $, and $\delta >0$ is given by~\eqref{fDdel}. 
We now let $\GVar = \GVar_{\varepsilon _\theta }$ where $\GVar_{\varepsilon  }$ is given by Lemma~\ref{eOgau} 
for this choice of  $\varepsilon $, and it follows from~\eqref{eOgau:1zb} and~\eqref{eOgau:u} that
\begin{equation} \label{fPRFq11} 
\Hcal ^\theta (t)  \le  C e^{(1-\delta )\alpha \gamma \tau _\theta }
\end{equation} 
and from~\eqref{eOgau:1b}, \eqref{fEC2} and~\eqref{fEC1}  that
\begin{equation}  \label{fVarugz4} 
\| \GVar _{\varepsilon _\theta } \| _{ L^\infty  } +  \| \nabla \GVar _{\varepsilon _\theta } \| _{ L^\infty  } + \| \Delta \GVar _{\varepsilon _\theta } \| _{ L^\infty  } \le C  e^{2\lambda \alpha \gamma \tau _\theta }.
\end{equation} 
Finally, we estimate the first term in the right-hand side of~\eqref{fVarugz3} and we deduce from~\eqref{fVarug}, \eqref{fVarugz2}, \eqref{fVarugz4} and~\eqref{fEC1} that
\begin{equation} \label{fVarugz5} 
\frac {1} {2}  \zeta _\theta ' (0) - \cos \theta  \Kcal ^\theta (0) \le C e^{2\lambda \alpha \gamma \tau _\theta }.
\end{equation} 
Estimates~\eqref{fVarugz3}, \eqref{fVarugz5}, \eqref{fPRFq11}, \eqref{fPRFh1}, \eqref{fVarugz4} and~\eqref{fEC1} now yield
\begin{equation} \label{fVarugz6} 
0 \le  C (1+ \tau _\theta ) e^{2\lambda \alpha \gamma \tau _\theta }
+   \frac {2N} {\alpha \gamma ^2}  \Ene (\DI ) (e^{\alpha \gamma \tau _\theta }- 1 - \alpha \gamma \tau _\theta ) 
 +  C \tau _\theta ^2 e^{(1-\delta )\alpha \gamma \tau _\theta }  .
\end{equation} 
Since $\Ene (\DI )<0$, and $\max\{ 2\lambda , 1-\delta  \}<1$,
the right-hand side of the above inequality is negative if $\tau _\theta $ is large. Thus 
$\sup _{ 0\le \theta <\frac {\pi } {2} }\tau _\theta <\infty $, which completes the proof. 
\end{proof}  

\begin{rem} \label{eRem2} 
Under the assumptions of Theorem~\ref{eGGL1bu}, we know that $\Tma^\theta $ remains bounded. On the other hand,  we do not know if  $\Tma^\theta $ has a limit as $\theta \to \frac {\pi } {2}$, and if it does, if this limit is the blowup time of the solution of the limiting Schr\"o\-din\-ger equation. 
\end{rem} 

We end this section by considering the case $\gamma <0$. 
The condition for blowup in Theorem~\ref{eBUg1} in this case is $\Ene  _{ \frac {\gamma } {\cos \theta } } ( \DI )<0$. Given $\DI \in \Cz \cap H^1 (\R^N ) $, $\DI \not = 0$, it is clear that $\Ene  _{ \frac {\gamma } {\cos \theta } } ( \DI ) > 0$ for all $\theta $ sufficiently close to $\frac {\pi } {2}$, and we do not know if there exists an initial value $\DI$ such that the corresponding solution of~\eqref{GL} blows up in finite time for all $\theta $ close to $\frac {\pi } {2}$. (See Open Problem~\ref{OPqneg}.)  

Another point of view concerning the case $\gamma <0$ is to apply the transformation~\eqref{fSC3}--\eqref{fSC2} and study the resulting equation~\eqref{GL2}. Let $\DIb \in \Cz \cap H^1 (\R^N ) $ and, given $0\le  \theta < \frac {\pi } {2}$,  let $v^\theta $ the corresponding solution of~\eqref{GL2} defined on the  maximal interval $[0, \Sma^\theta )$.
If $ \Ene _{ -1 } (\DIb )<0$, then it follows from~\eqref{fES1b3} that $\Sma^\theta <\infty $ for all $0\le  \theta < \frac {\pi } {2}$. Therefore, it makes sense to study the behavior of $\Sma^\theta $ as $\theta \to \frac {\pi } {2}$, and we have the following result.

\begin{thm}\label{eGGLhbu} 
Suppose $N\ge 2$, $\frac {4} {N} \le \alpha \le 4$,
and fix a radially symmetric initial value $\DIb \in H^1 (\R^N ) \cap \Cz$.
Given any $0\le \theta < \frac {\pi } {2}$,
let $v^\theta $ be the corresponding solution of~\eqref{GL2} defined on the maximal interval $[0, \Sma^\theta )$.
If $ \Ene _{ -1 } (\DIb )<0$, then $\sup _{ 0\le \theta <\frac {\pi } {2} }\Tma ^\theta <\infty $. 
\end{thm}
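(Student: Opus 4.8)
The stated conclusion should read $\sup_{0\le\theta<\frac\pi2}\Sma^\theta<\infty$, since the object with lifespan $\Sma^\theta$ is the solution $v^\theta$ of~\eqref{GL2}. I would first reduce to the regime $\theta\to\frac\pi2$: for $\theta$ in any compact subinterval $[0,\theta_0]\subset[0,\frac\pi2)$ the crude bound $\Sma^\theta\le\frac1{\alpha\cos\theta}\log\frac{\alpha+2}\alpha$ of Remark~\ref{eGN1} (valid because $\Ene_{-1}(\DIb)<0$) is already uniform, so only the behaviour as $\cos\theta\to0$ needs attention. The decisive structural point is that, by~\eqref{fSN3}, $\Ene_{-1}(v^\theta(t))$ is nonincreasing, whence the pure energy $\Ene(v^\theta(t))=\Ene_{-1}(v^\theta(t))-\frac12\|v^\theta(t)\|_{L^2}^2\le\Ene_{-1}(\DIb)<0$ is bounded above by a fixed negative constant, uniformly in $t$ and $\theta$. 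Thus, in contrast with the case $\gamma>0$ of Theorem~\ref{eGGL1bu}, the energy driving the variance does not grow exponentially, and I expect the simpler argument of the case $\gamma=0$ (fixed cut-off, hence only $N\ge2$ needed) to apply.

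The plan is to adapt the proof of Theorem~\ref{eGGL1bu} to~\eqref{GL2}. First I would establish the analogue of Lemma~\ref{eGGLq}: the identities~\eqref{fSN1}, \eqref{fSN3} and the bound $\widetilde\jmath\le(\alpha+2)\widetilde e$ from~\eqref{fNeg5b1} have exactly the form of~\eqref{fPos6}, \eqref{fPos7}, \eqref{eNLH4:8} with the role of $\gamma$ played by $0$, so the Levine argument carries over verbatim. Setting $\tau_\theta=\sup\{t\in[0,\Sma^\theta);\ \|v^\theta(s)\|_{L^2}^2\le K\|\DIb\|_{L^2}^2\text{ for }0\le s\le t\}$ with $K$ as in~\eqref{feGGLq:d1}, this yields $\Sma^\theta\le\frac{\alpha+4}\alpha\tau_\theta$, reducing everything to a uniform bound on $\tau_\theta$.

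Next I would compute the truncated variance $\zeta_\theta(t)=\int_{\R^N}\GVar|v^\theta|^2$ for a fixed radial $\GVar\in C^\infty(\R^N)\cap W^{4,\infty}(\R^N)$. Writing the velocity in~\eqref{GL2} as $e^{i\theta}[\Delta v^\theta+|v^\theta|^\alpha v^\theta]-e^{i\theta}v^\theta$ splits $\zeta_\theta''$ into the part coming from~\eqref{GL2b} with $f\equiv1$ and a remainder generated by the $-e^{i\theta}v^\theta$ term. Proposition~\ref{eGGLzuu} and the Pohozaev identity bound the first part by $2N\alpha\,\Ene(v^\theta(t))+\Hcal^\theta(t)+\cos\theta\frac{d}{dt}\Kcal^\theta$ exactly as in~\eqref{fVaruqgb}, and here I use $\Ene(v^\theta(t))\le\Ene_{-1}(\DIb)$; applying Lemma~\ref{eOgau} with $A=\|\DIb\|_{L^2}$, $\Avoir=\frac\alpha{\alpha+2}$ and a single fixed small $\varepsilon$ (chosen so $\Const\Avoir\varepsilon^{2(N-1)}<1$ and $\kappa(\Avoir,\varepsilon)\le-N\alpha\,\Ene_{-1}(\DIb)$) gives $\Hcal^\theta\le-N\alpha\,\Ene_{-1}(\DIb)$, so this part is $\le N\alpha\,\Ene_{-1}(\DIb)+\cos\theta\frac{d}{dt}\Kcal^\theta$. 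A direct computation shows that every term in the remainder carries at least one factor $\cos\theta$ (it must, since at $\theta=\frac\pi2$ the gauge $w=e^{e^{i\theta}t}v^\theta$ turns~\eqref{GL2} into the standard Schr\"odinger equation with $|w|=|v^\theta|$, making the $-v^\theta$ term invisible to $\zeta_\theta''$); its densities are bounded by $|\nabla v^\theta|^2$, $|v^\theta|^2$ and $|v^\theta|^{\alpha+2}$.

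Finally I would integrate twice on $[0,\tau_\theta]$. The diffusive term $\cos\theta\frac d{dt}\Kcal^\theta$ is controlled as in~\eqref{fPRFh1}: the $L^2$-identity~\eqref{fSN1} gives $\frac{d}{dt}\|v^\theta\|_{L^2}^2=2\cos\theta(\frac\alpha{\alpha+2}\|v^\theta\|_{L^{\alpha+2}}^{\alpha+2}-2\widetilde e)$ with $\widetilde e\le0$, so $\cos\theta\int_0^{\tau_\theta}\|v^\theta\|_{L^{\alpha+2}}^{\alpha+2}$ is bounded using $\|v^\theta\|_{L^2}^2\le K\|\DIb\|_{L^2}^2$ on $[0,\tau_\theta]$; the same bound and the fixed choice of $\GVar$ make the boundary terms $\zeta_\theta'(0)$, $\Kcal^\theta(0)$ uniformly bounded. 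The $\cos\theta$-weighted remainder integrates to a term of size $O(\cos^2\theta\,\tau_\theta^2)$ (from the bounded $|v^\theta|^2$ densities) plus lower order. Collecting everything gives, schematically, $0\le C(1+\tau_\theta)+\frac12(N\alpha\,\Ene_{-1}(\DIb)+C'\cos^2\theta)\tau_\theta^2$, whose quadratic coefficient is negative once $\cos^2\theta<-N\alpha\,\Ene_{-1}(\DIb)/(2C')$, i.e. for $\theta$ close to $\frac\pi2$. Hence $\tau_\theta$, and with it $\Sma^\theta$, stays bounded as $\theta\to\frac\pi2$; combined with Remark~\ref{eGN1} on compacts this proves $\sup_\theta\Sma^\theta<\infty$. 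The main obstacle I anticipate is precisely the bookkeeping of the $-v^\theta$ remainder: one must verify that all its contributions are $\cos\theta$-weighted, so that they are dominated by the fixed negative leading term exactly in the range $\theta\to\frac\pi2$ where the crude bound fails. The gauge identity at $\theta=\frac\pi2$ is what makes this transparent.
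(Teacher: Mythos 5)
Your strategy coincides with the paper's: correct the statement's typo ($\Tma^\theta$ should be $\Sma^\theta$), adapt Lemma~\ref{eGGLq} to get $\Sma^\theta\le\frac{\alpha+4}{\alpha}\tau_\theta$, compute a truncated variance via Proposition~\ref{eGGLzuu}, apply Lemma~\ref{eOgau} with a single fixed $\varepsilon$ (your observation that the dissipation of $\Ene_{-1}$ replaces the exponential growth of the $\gamma>0$ case, so the $\gamma=0$ scheme applies, is exactly the point), and control the $\cos\theta$-weighted terms by the mass identity~\eqref{fSN1}. The genuine gap is in your treatment of the remainder produced by the $-e^{i\theta}v^\theta$ term, which you claim ``integrates to a term of size $O(\cos^2\theta\,\tau_\theta^2)$ (from the bounded $|v^\theta|^2$ densities)''. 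The extra contribution to $\zeta_\theta''$ is $-2\cos\theta\,\frac{d}{dt}\int_{\R^N}\GVar|v^\theta|^2$, and to view it as $\cos^2\theta$ times a bounded density you must expand the time derivative using the equation, which gives
\begin{equation*}
\begin{split}
-2\cos\theta\,\frac{d}{dt}\int_{\R^N}\GVar|v^\theta|^2
=&-4\cos^2\theta\int_{\R^N}\Bigl\{-\GVar|v^\theta_r|^2+\GVar|v^\theta|^{\alpha+2}-\GVar|v^\theta|^2+\tfrac12\,|v^\theta|^2\Delta\GVar\Bigr\}\\
&-4\cos\theta\sin\theta\,\Im\int_{\R^N}\overline{v^\theta}\,(\nabla v^\theta\cdot\nabla\GVar).
\end{split}
\end{equation*}
This contains the \emph{positive} gradient term $+4\cos^2\theta\int\GVar|v^\theta_r|^2$ and the momentum term $\cos\theta\sin\theta\,\Im\int\overline{v^\theta}(\nabla v^\theta\cdot\nabla\GVar)$; neither is controlled by the bound $\|v^\theta(s)\|_{L^2}^2\le K\|\DIb\|_{L^2}^2$ on $[0,\tau_\theta]$, since both involve $\nabla v^\theta$, which must in fact blow up as $t\uparrow\Sma^\theta$. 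Your gauge identity at $\theta=\frac\pi2$ explains why every extra term carries a factor $\cos\theta$, but not why the densities multiplying $\cos\theta$ are bounded — they are not.

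The repair is the paper's bookkeeping: never expand that derivative. Absorb the density $-2\GVar|v^\theta|^2$ into the $\cos\theta\,\frac{d}{dt}[\,\cdot\,]$ bracket (the paper's $\Kcal$-density for~\eqref{GL2} is $-2\GVar|v^\theta_r|^2+\frac{\alpha+4}{\alpha+2}\GVar|v^\theta|^{\alpha+2}+(\Delta\GVar-2\GVar)|v^\theta|^2$), and integrate the whole bracket exactly in time: the double integral equals $\cos\theta\int_0^{\tau_\theta}\bigl[\Kcal^\theta(s)-\Kcal^\theta(0)\bigr]\,ds$. There, the $|v^\theta|^2$ parts give $O(\cos\theta\,\tau_\theta)$ by the definition of $\tau_\theta$, the $-2\GVar|v^\theta_r|^2$ part is nonpositive, and the $|v^\theta|^{\alpha+2}$ part is $O(1)$ by the analogue of~\eqref{fPRFh1}, because~\eqref{fSN1} together with $\Ene_{-1}(v^\theta(t))\le\Ene_{-1}(\DIb)<0$ yields $\frac{2\alpha}{\alpha+2}\cos\theta\int_0^{\tau_\theta}\int_{\R^N}|v^\theta|^{\alpha+2}\le(K-1)\|\DIb\|_{L^2}^2$. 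Hence the entire remainder is $O(1+\tau_\theta)$, \emph{linear} in $\tau_\theta$ and uniform in $\theta$, so the inequality obtained by double integration reads $0\le C(1+\tau_\theta)+N\alpha\,\Ene_{-1}(\DIb)\,\tau_\theta^2$, which bounds $\tau_\theta$ for \emph{all} $\theta\in[0,\frac\pi2)$ simultaneously. With this correction your proof closes, and as a bonus the two-regime structure (crude bound of Remark~\ref{eGN1} on compact subintervals plus a variance argument only near $\frac\pi2$) becomes unnecessary: this is precisely how the paper concludes, ``exactly as in the case $\gamma=0$ of the proof of Theorem~\ref{eGGL1bu}''.
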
 

The proof of Theorem~\ref{eGGLhbu} is very similar to the proof of~\cite[proof of Theorem~1.5]{CazenaveDW},  with minor modifications only. 
More precisely, it is not difficult to adapt the proof of Lemma~\ref{eGGLq} to show that if
\begin{equation*}
\tau _\theta = \sup \{ t\in [0,\Sma^\theta );\,  \|v^\theta (s)\| _{ L^2 }^2 \le K  \| \DIb  \| _{ L^2 }^2  \text{ for }0\le s\le t  \},
\end{equation*} 
where $K$ is defined by~\eqref{feGGLq:d1},  then $\Sma ^\theta \le  \frac {\alpha +4} {\alpha } \tau _\theta  $.
Moreover, given a real-valued, radially symmetric function $\GVar \in C^\infty (\R^N ) \cap W^{4, \infty } (\R^N ) $, and setting
\begin{equation*} 
\zeta _\theta (t)= \int  _{ \R^N  } \GVar (x)  |v^\theta (t,x)|^2 dx
\end{equation*} 
it is not difficult to deduce from Proposition~\ref{eGGLzuu} the variance identities
\begin{equation*} 
\begin{split} 
\frac {1} {2} \zeta _\theta '(t)   =  & \cos \theta  \int  _{ \R^N  }   \Bigl\{ - \GVar   | v^\theta _r |^2 +  \GVar   |v^\theta |^{\alpha +2}  - \GVar  |v^\theta |^2 + \frac 12   |v^\theta |^2  \Delta  \GVar \Bigr\}
\\ & +\sin \theta  \Im \int _{ \R^N  }  \overline{v^\theta } (\nabla v^\theta  \cdot  \nabla  \GVar )     
\end{split} 
\end{equation*}
and
\begin{equation*} 
\begin{split} 
\frac {1} {2} &\zeta _\theta '' (t) \le  2N\alpha  \Ene _{ -1 }(v^\theta ) \\ & + \int  _{ \R^N  }  \Bigl\{ -2(2- \GVar '' ) |v^\theta _r|^2  + \frac {\alpha}{\alpha+2}    (2N - \Delta  \GVar ) |v^\theta |^{\alpha +2} 
  -\frac {1}{2}  (2N\alpha + \Delta^2  \GVar ) |v^\theta |^2 \Bigr\}
\\  &+ \cos \theta   \frac {d} {dt}   \int _{ \R^N  } \Bigl\{ - 2  \GVar   | v^\theta _r|^2+ \frac {\alpha +4} {\alpha +2}   \GVar   |v^\theta |^{\alpha +2} + ( \Delta  \GVar  -2 \GVar ) |v^\theta |^2 \Bigr\}  .
\end{split}  
\end{equation*} 
One can then conclude exactly as in the case $\gamma =0$ of the proof of Theorem~\ref{eGGL1bu}.

\section{Some open problems} \label{OPPB} 

\begin{opb} 
Suppose $\frac {4} {N} < \alpha < \frac {4} {N-2}$. Let  $\DI\in H^1 (\R^N ) $, $\DI \not = 0$ and $u$ the corresponding solution of~\eqref{NLS}. It follows from~\cite[Theorem~1]{OhtaT} that, if $\gamma  $ is sufficiently negative, then $u$ is global. 
Does $u$ blows up in finite time for $\gamma >0$ sufficiently large (this is true if $\Ene (\DI) <0$ and $\DI \in L^2 (\R^N ,  |x|^2 dx) $, by Theorem~$\ref{eNLS2}$), or does there exist $\DI\not = 0$ such that $u$ is global for all $\gamma >0$? 
\end{opb} 

\begin{opb} 
Let $0\le \theta <\frac {\pi } {2}$, $\DI \in \Cz \cap H^1 (\R^N ) $, $\DI\not = 0$, and let $u$ be the corresponding solution of~\eqref{GL}. 
If $\gamma $ is sufficiently negative, then $u$ is global, by Remark~$\ref{eLocGL3}$.
Does $u$ blows up in finite time for all sufficiently large $\gamma >0$ (this is true if $\Ene (\DI) <0$, by Theorem~$\ref{eBUg1}$), or does there exist $\DI\not = 0$ such that $u$ is global for all $\gamma >0$? (The question is open even for the nonlinear heat equation~\eqref{NLH}.)
\end{opb} 

\begin{opb} \label{OPqneg} 
Let $\gamma <0$. Does there exist an initial value $\DI \in \Cz \cap H^1 (\R^N ) $, $\DI \not = 0$ such that the corresponding solution of~\eqref{GL} blows up in finite time for all $\theta $ close to $\frac {\pi } {2}$? 
One possible strategy for constructing such initial values when $\frac {4} {N}< \alpha <\frac {4} {N-2}$, would be to adapt the proof of Theorem~$\ref{eNLS2b}$ to equation~\eqref{GL}.
\end{opb} 

\begin{opb} 
Suppose $\gamma \ge 0$ and $0\le \theta <\frac {\pi } {2}$. The sufficient condition for blowup in Theorem~$\ref{eBUg1}$ is $\Ene (\DI) <0$. 
Does there exists a constant $\kappa > \frac {2} {\alpha +2}$ such that the (weaker) condition $\int  _{ \R^N  } |\nabla \DI |^2 - \kappa \int  _{ \R^N  }  | \DI |^{\alpha +2} <0$ implies finite-time blowup?
Note that for the equation with $\gamma =0$ set on a bounded domain with Dirichlet boundary conditions, $\kappa =1$ is not admissible. Indeed, there exist initial values for which $\Neh ( \DI )<0$ and $\Tma = \infty $. (See~\cite{DicksteinMSW}).  
\end{opb} 

\begin{opb} 
Theorems~$\ref{eGlob1}$ and~$\ref{eGGLhbu}$ require that  $\alpha \le 4$ and the solution is radially symmetric. Are these assuptions necessary?  
Note that they are necessary in Lemma~{\rm\ref{eOgau}} (see Section~$6$ in~\cite{CazenaveDW}) which is an essential tool in our proof.
Could these assumption be replaced by stronger decay conditions on the initial value, such as $ \DI \in L^2 (\R^N ,  |x|^2 dx) $? 
In particular, one could think of adapting the proof of Theorem~$\ref{eNLS2}$ (instead of the proof of Theorem~$\ref{eNLS3}$), but this does not seem to be simple, see Section~$7$ in~\cite{CazenaveDW}.

\end{opb} 

\appendix

\section{A truncated variance identity} \label{TVI} 

We prove the following result, which is a slightly more general form of~\cite[Lemma~5.1]{CazenaveDW}.

\begin{prop}\label{eGGLzuu} 
Fix $\alpha >0$, $0\le \theta  <\frac {\pi } {2}$, and a real-valued function $\GVar \in C^\infty (\R^N ) \cap W^{4, \infty } (\R^N ) $. 
Let $\DI \in \Cz \cap H^1 (\R^N ) $,   $f  \in C^1(\R, \R)$,  and consider 
the  corresponding  solution
 $v$  of
\begin{equation} \label{GL2b1}
\begin{cases}
 v_t= e^{i\theta } [\Delta v+ f(t) |v|^\alpha v ] \\
v(0)= \DI
\end{cases}
\end{equation} 
 defined on the maximal interval $[0, \Tma)$.
If $\zeta $ is defined by
\begin{equation*} 
\zeta (t)= \int  _{ \R^N  } \Psi (x)  |v(t,x)|^2 dx
\end{equation*} 
then  $\zeta \in C^2([0, \Tma))$, 
\begin{equation} \label{fVarug1} 
\begin{split} 
\frac {1} {2} \zeta '(t)   =  & \cos \theta    \int  _{ \R^N  }  \Bigl\{ - \GVar   |\nabla v|^2 + f(t) \GVar   |v|^{\alpha +2} + \frac 12  |v|^2  \Delta  \GVar \Bigr\}
\\ & +\sin \theta  \Im \int _{ \R^N  }  \overline{v} ( \nabla v \cdot  \nabla  \GVar) 
\end{split} 
\end{equation}
and
\begin{equation} \label{fVaruqg1} 
\begin{split} 
\frac {1} {2} \zeta '' (t) =&
  \int _{ \R^N  }  \Bigl\{ -\frac {1}{2} |v|^2 \Delta^2  \GVar  - \frac {\alpha f(t)}{\alpha+2}   |v|^{\alpha +2}  \Delta  \GVar  +2\Re  \langle H(\GVar ) \nabla \overline{v},\nabla v\rangle   \Bigr\}
\\  + \cos \theta &  \frac {d} {dt}   \int _{ \R^N  } \Bigl\{ - 2  \GVar   |\nabla v|^2+ \frac {\alpha +4} {\alpha +2}  f(t) \GVar   |v|^{\alpha +2} +   |v|^2 \Delta  \GVar \Bigr\}  
\\   & -2 \cos^2 \theta \int _{ \R^N  }  \GVar   |v_t|^2  - \frac {2f'(t)} {\alpha +2}   \cos \theta  \int  _{ \R^N  }  \GVar   |v|^{\alpha +2}  
\end{split}  
\end{equation} 
for all $0\le t<\Tma$, where $H( \GVar )$ is the Hessian matrix $(\partial ^2 _{ ij }\GVar ) _{ i,j }$.
In particular, if both $\GVar$ and $\DI$ (hence, $v$) are radially symmetric,  then
\begin{equation} \label{fVaruqg2} 
\begin{split} 
\frac {1} {2} \zeta '' (t) =&
\int _{ \R^N  }   \Bigl\{ -\frac {1}{2}  |v|^2 \Delta^2  \GVar - \frac {\alpha f(t)}{\alpha+2}  |v|^{\alpha +2} \Delta  \GVar   +2 \GVar '' |v_r|^2 \Bigr\}
\\  + \cos \theta &  \frac {d} {dt}   \int _{ \R^N  } \Bigl\{ - 2  \GVar   | v _r |^2+ \frac {\alpha +4} {\alpha +2} f(t) \GVar   |v|^{\alpha +2} +  |v|^2  \Delta  \GVar \Bigr\}  
\\   & -2 \cos^2 \theta \int _{ \R^N  }  \GVar   |v_t|^2  -  \frac {2f'(t)} {\alpha +2}   \cos \theta  \int  _{ \R^N  }  \GVar   |v|^{\alpha +2} 
\end{split}  
\end{equation} 
for all $0\le t<\Tma$.
\end{prop}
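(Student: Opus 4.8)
The plan is to obtain \eqref{fVaruqg1} by differentiating the first-order identity \eqref{fVarug1} in time, to get \eqref{fVarug1} itself by a direct computation, and to deduce \eqref{fVaruqg2} by specializing to the radial case. Throughout, I would carry out all manipulations on the open interval $(0,\Tma)$, where the smoothing effect of the analytic semigroup (recall $0\le\theta<\frac\pi2$) gives $v(t)\in H^2(\R^N)$ and $v_t(t)\in L^2(\R^N)$, so that every differentiation under the integral sign is justified by $v\in C((0,\Tma),H^2)\cap C^1((0,\Tma),L^2)$ and every integration by parts against $\Psi\in W^{4,\infty}(\R^N)$ is legitimate. The asserted regularity of $\zeta$ and the values at $t=0$ are then recovered by approximating $\DI$ in $\Cz\cap H^1(\R^N)$ by smoother data and passing to the limit, exactly as in \cite[Lemma~5.1]{CazenaveDW}.

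For \eqref{fVarug1} I would differentiate $\zeta$ under the integral, write $\frac{d}{dt}|v|^2=2\Re(\overline v\,v_t)$, and substitute $v_t=e^{i\theta}[\Delta v+f(t)|v|^\alpha v]$. Splitting $e^{i\theta}=\cos\theta+i\sin\theta$ and using that $|v|^{\alpha+2}$ is real isolates the factor $f(t)\cos\theta\,|v|^{\alpha+2}$ together with two terms involving $\Delta v$. These I would handle with the pointwise identities $\Re(\overline v\,\Delta v)=\frac12\Delta|v|^2-|\nabla v|^2$ and $\Im(\overline v\,\Delta v)=\nabla\cdot\bigl(\Im(\overline v\nabla v)\bigr)$; integrating by parts against $\Psi$ then produces exactly the $\cos\theta$-bracket and the $\sin\theta$-term of \eqref{fVarug1}.

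For \eqref{fVaruqg1} I would differentiate \eqref{fVarug1} term by term. Each time-derivative brings in $v_t$ (and $\nabla v_t$); I would again substitute the equation and integrate by parts to move every derivative off $v_t$, using $\Delta\overline v=e^{i\theta}\overline{v_t}-f|v|^\alpha\overline v$ to trade $\int\Psi\,v_t\Delta\overline v$ for a term $\cos\theta\int\Psi|v_t|^2$ plus lower-order pieces. The bookkeeping is where all the work lies: differentiating the explicit coefficient produces the $f'(t)$-term; the $\cos\theta$-part of \eqref{fVarug1} yields both the total-derivative group $\cos\theta\,\frac{d}{dt}\int\{\cdots\}$ and the term $-2\cos^2\theta\int\Psi|v_t|^2$; and rewriting the $\sin^2\theta$ contributions via $\sin^2\theta=1-\cos^2\theta$ and combining them with the $\cos^2\theta$ contributions regroups everything into the $\theta$-independent first line of \eqref{fVaruqg1} (the Schr\"odinger-type bracket, in which the Hessian term $2\Re\langle H(\Psi)\nabla\overline v,\nabla v\rangle$ arises from $\nabla\Psi\cdot\nabla$ acting on $\nabla v$, and $\Delta^2\Psi$ from a double integration by parts) plus the $\cos\theta$/$\cos^2\theta$ groups. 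I expect the main obstacle to be tracking the mixed $\sin\theta\cos\theta$ cross-terms produced when the imaginary-part term $\Im\int\overline v(\nabla v\cdot\nabla\Psi)$ is differentiated and $v_t$ is substituted, and checking that these cancel. A convenient sanity check at each stage is to set $\theta=0$ and (formally) $\theta=\frac\pi2$, where the identity must collapse to the known heat and Schr\"odinger virial identities.

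Finally, for \eqref{fVaruqg2} I would use that when $v$ and $\Psi$ are radially symmetric, $\nabla v=v_r\,\frac{x}{|x|}$, so $|\nabla v|^2=|v_r|^2$, while the Hessian of a radial $\Psi$ decomposes as $\Psi''\,\frac{x}{|x|}\otimes\frac{x}{|x|}+\frac{\Psi'}{|x|}\bigl(I-\frac{x}{|x|}\otimes\frac{x}{|x|}\bigr)$, so that only its purely radial component contracts nontrivially with $\nabla\overline v\otimes\nabla v$, giving $\langle H(\Psi)\nabla\overline v,\nabla v\rangle=\Psi''|v_r|^2$. Substituting these two identities into \eqref{fVaruqg1} yields \eqref{fVaruqg2} at once.
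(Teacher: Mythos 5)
Your proposal follows essentially the same route as the paper's proof: the paper likewise obtains \eqref{fVarug1} by multiplying the equation by $\GVar\,\overline{v}$ and taking real parts, and obtains \eqref{fVaruqg1} by differentiating \eqref{fVarug1} in time, substituting the equation with the regrouping $-\sin \theta \,\Im w = \cos \theta \,\Re w - \Re (e^{-i\theta } w)$ (algebraically identical to your $\sin ^2\theta = 1-\cos ^2\theta $ device, and it is exactly this step that makes your $\sin\theta\cos\theta$ cross-terms disappear), trading $\Delta \overline{v} + f |v|^\alpha \overline{v} = e^{-i\theta } \overline{v_t}$ for the $\cos ^2\theta \int \GVar |v_t|^2$ term and getting the Hessian/bilaplacian terms and the radial identity $\Re \langle H(\GVar ) \nabla \overline{v}, \nabla v \rangle = \GVar '' |v_r|^2$ just as you describe. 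The only substantive difference is that you spell out the regularity justification (working on $(0,\Tma)$ where $v\in C((0,\Tma),H^2)\cap C^1((0,\Tma),L^2)$, then approximating at $t=0$), which the paper's proof leaves implicit.
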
 

\begin{proof} 
Multiplying the equation~\eqref{GL2b1}  by $\GVar (x) \overline{v} $, taking the real part, and using the identity
\begin{equation*} 
2 \Re [ \overline{v} ( \nabla v \cdot \nabla \GVar  )] = \nabla \cdot (|v|^2 \nabla  \GVar  ) -  |v|^2 \Delta  \GVar  
\end{equation*} 
we obtain~\eqref{fVarug1}. 
Next,  the identity
\begin{equation*} 
 \overline{v} ( \nabla v_t \cdot \nabla  \GVar ) = \nabla \cdot (  v_t  \overline{v} \nabla  \GVar  ) - (\nabla  \GVar  \cdot \nabla  \overline{v}) v_t -  \overline{v} v_t \Delta  \GVar   
\end{equation*} 
and integration by parts yield
\begin{equation*} 
\frac {d} {dt}  \Bigl(  \sin \theta  \Im \int _{ \R^N  }  \overline{v} ( \nabla v \cdot \nabla  \GVar ) \Bigr) =-  \sin \theta 
\Im \int _{ \R^N  } [  \overline{v} \Delta  \GVar     + 2   \nabla  \GVar  \cdot \nabla   \overline{v}   ] v_t. 
\end{equation*} 
We rewrite this last identity in the form
\begin{equation} \label{fVarqg} 
\begin{split} 
\frac {d} {dt}  \Bigl(  \sin \theta  \Im \int _{ \R^N  }  \overline{v} ( \nabla v \cdot \nabla  \GVar ) \Bigr) =  &\cos  \theta \,  \Re \int _{ \R^N  } [  \overline{v}
\Delta  \GVar  +2   \nabla  \GVar  \cdot \nabla  \overline{v} ] v_t \\  & -  
\Re \int _{ \R^N  } [  \overline{v}  \Delta  \GVar  + 2   \nabla  \GVar  \cdot \nabla   \overline{v}   ] e^{-i \theta }v_t . 
\end{split} 
\end{equation} 
Using~\eqref{GL2b1} and the identities
\begin{align*} 
\Re [ (\nabla  \GVar \cdot \nabla  \overline{v})  |v|^\alpha v ] &= \frac {1} {\alpha +2} \nabla \cdot (  |v|^{\alpha +2} \nabla  \GVar ) -
\frac {1} {\alpha +2} |v|^{\alpha +2} \Delta  \GVar  \\
\Re [  \Delta  v (  \overline{v} \Delta \GVar +2    \nabla  \overline{v} \cdot \nabla  \GVar ) ] &=
\Re \nabla \cdot   \Bigl[ \nabla v ( \overline{v} \Delta \GVar + 2 \nabla  \overline{v} \cdot \nabla \GVar ) -  |\nabla v|^2 \nabla \GVar \\ & - \frac {1} {2}  |v|^2 \nabla (\Delta \GVar) \Bigr]
 -2 \Re \langle H( \GVar ) \nabla \overline{v},\nabla v \rangle 
+ \frac {1} {2}  | v|^2 \Delta ^2 \GVar 
\end{align*} 
 we see that
\begin{equation} \label{fVarcg} 
\begin{split} 
- &
\Re  \int _{ \R^N  } [   \overline{v} \Delta  \GVar     + 2   \nabla  \GVar  \cdot \nabla   \overline{v}   ] e^{-i \theta } v_t  \\ &
= -\Re \int _{ \R^N  }  [   \overline{v} \Delta  \GVar     + 2   \nabla  \GVar  \cdot \nabla   \overline{v}   ] (\Delta v+ f(t) |v|^\alpha v )  \\  =& -\frac {1}{2} \int _{ \R^N  }  |v|^2 \Delta^2  \GVar  - \frac {\alpha f(t)}{\alpha+2}   \int _{ \R^N  }  |v|^{\alpha +2}  \Delta  \GVar 
  +2\Re \int _{ \R^N  } \langle H( \GVar ) \nabla \overline{v},\nabla v\rangle .
\end{split} 
\end{equation}  
Next, we observe that
\begin{equation} \label{fLS1} 
 \Re \int _{ \R^N  } [  \overline{v}
\Delta  \GVar  +2   \nabla  \GVar  \cdot \nabla  \overline{v} ] v_t
= \frac {1} {2} \frac {d} {dt} \int  _{ \R^N  }  |v|^2 \Delta \GVar + 2 \Re \int  _{ \R^N  } (\nabla  \overline{v} \cdot \nabla \GVar ) v_t .
\end{equation} 
On the other hand,
\begin{equation*}
\begin{split} 
&  \frac {f' } {\alpha +2}  \int  _{ \R^N  } \GVar   |v|^{\alpha +2} +
\frac {d} {dt}\int _{ \R^N  }  \GVar   \Bigl( \frac { |\nabla v|^2} {2} - \frac {f(t)} {\alpha +2}  |v|^{\alpha +2} \Bigr)   \\ & =
\Re \int  _{ \R^N  } \GVar  (\nabla  \overline{v} \cdot \nabla v_t - f |v|^\alpha  \overline{v} v_t  )  
= - \Re \int _{ \R^N  } [   \GVar  (\Delta  \overline{v} + f |v|^\alpha  \overline{v} ) v_t   +  (\nabla  \GVar \cdot \nabla  \overline{v}) v_t ]   \\ & = -\cos \theta \int  _{ \R^N  } \GVar   |v_t|^2 -\Re \int _{ \R^N  } (\nabla  \GVar  \cdot \nabla  \overline{v})v_t   
\end{split} 
\end{equation*} 
so that
\begin{multline}  \label{fVaruug} 
2 \Re \int _{ \R^N  } (\nabla  \GVar  \cdot \nabla  \overline{v})v_t  = -2 \cos \theta \int  _{ \R^N  } \GVar   |v_t|^2 -  \frac {2f' } {\alpha +2}  \int  _{ \R^N  } \GVar   |v|^{\alpha +2}  \\
- \frac {d} {dt}\int _{ \R^N  }  \GVar   \Bigl(  { |\nabla v|^2}  - \frac {2f(t)} {\alpha +2}  |v|^{\alpha +2}   \Bigr) .
\end{multline} 
Applying~\eqref{fVarqg}, \eqref{fVarcg}, \eqref{fLS1}   and~\eqref{fVaruug}, we deduce that
\begin{equation} \label{fVarutg} 
\begin{split} 
\frac {d} {dt} & \Bigl(    \sin \theta  \Im \int _{ \R^N  }  \overline{v} (\nabla  \GVar \cdot  \nabla v )  \Bigr)   \\ &=  \int _{ \R^N  }  \Bigl\{  -\frac {1}{2} |v|^2 \Delta^2  \GVar  - \frac {\alpha f(t)}{\alpha+2}    |v|^{\alpha +2}  \Delta  \GVar 
  +2\Re \langle H( \GVar ) \nabla \overline{v},\nabla v\rangle  \Bigr\} \\  & 
+ \cos \theta  \frac {d} {dt}   \int   _{ \R^N  } \Bigl( - \GVar  |\nabla v|^2 + \frac { 2 f(t)} {\alpha +2}  \GVar   |v|^{\alpha +2}  + \frac {1}{2}  |v|^2 \Delta  \GVar \Bigr)  
\\  &  -2 \cos^2 \theta \int _{ \R^N  }  \GVar   |v_t|^2  - \frac {2f'(t)} {\alpha +2}   \cos \theta  \int  _{ \R^N  } \GVar   |v|^{\alpha +2} .
\end{split} 
\end{equation} 
Taking now the time derivative of~\eqref{fVarug1} and applying~\eqref{fVarutg}, we obtain~\eqref{fVaruqg1}. Finally, if both $\GVar$ and $\DI$ (hence, $v$) are radially symmetric, then $ \Re \langle H( \GVar ) \nabla \overline{v},\nabla v\rangle =  \GVar '' |v_r|^2 $,
 so that~\eqref{fVaruqg2} follows from~\eqref{fVaruqg1}.
\end{proof} 

\section{A Caffarelli-Kohn-Nirenberg inequality} \label{LOT} 

We use the following form of Caffarelli-Kohn-Nirenberg inequality~\cite{CaffarelliKN}. It extends an inequality which was established in~\cite{OgawaTu} and generalized in~\cite[Lemma~5.3]{CazenaveDW}.

\begin{lem} \label{eOgau} 
Suppose $N\ge 2$ and $\alpha \le 4$ and let $  \Granda> 0$.
There exist a constant $\Const $ and a family $ ( \GVar _\varepsilon  )  _{ \varepsilon >0 } \subset  C^\infty (\R^N ) \cap W^{4,\infty } (\R^N ) $ of radially symmetric functions such that $\GVar _\varepsilon (x)>0$ for $x\not = 0$,
\begin{gather} 
\sup  _{ \varepsilon >0 }\, [ \varepsilon ^2 \| \GVar_\varepsilon  \| _{ L^\infty  } + \varepsilon   \| \partial _r\GVar_\varepsilon \| _{ L^\infty  }
+  \| \Delta \GVar _\varepsilon  \| _{ L^\infty  } 
+ \varepsilon ^{-2}  \| \Delta ^2 \GVar _\varepsilon  \| _{ L^\infty  } ] < \infty \label{eOgau:1b} \\
2N- \Delta \GVar_\varepsilon \ge 0 \label{eOgau:1zb} 
\end{gather} 
 and
\begin{equation} \label{eOgau:u} 
\begin{split} 
 -2  \int _{ \R^N  } (2 - \GVar_\varepsilon  '' ) |u_r|^2 
 &
  +\Avoir \int _{ \R^N  }  (2N-  \Delta  \GVar_\varepsilon )  |u|^{\alpha +2}
 \\ & 
 -\frac {1}{2} \int _{ \R^N  }  |u|^2 \Delta^2  \GVar _\varepsilon  \le 
\kappa ( \mu , \varepsilon  ) 
\end{split} 
\end{equation} 
for all radially symmetric $u\in H^1 (\R^N ) $ such that $ \|u\| _{ L^2 }\le  \Granda $ and all 
$\mu , \varepsilon >0$ such that $ \Const    \Avoir  \varepsilon ^ {2(N-1) } <1$,
where 
\begin{equation} \label{eOgau:u1} 
\kappa ( \mu , \varepsilon  ) = 
\begin{cases} 
 \Const \Avoir  \Bigl(  \varepsilon ^{\frac {N\alpha } {2}}  
 +  [ \Const    \Avoir  \varepsilon ^ {2(N-1) }  ] ^{\frac {\alpha } {4-\alpha }} \Bigr) +\Const \varepsilon ^2  & \text{if } 0<\alpha <4 \\
  \Const \Avoir    \varepsilon ^{\frac {N\alpha } {2}}   +\Const \varepsilon ^2  & \text{if } \alpha =4. 
\end{cases} 
\end{equation} 
\end{lem}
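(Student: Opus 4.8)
The plan is to obtain $\GVar_\varepsilon$ by rescaling a single fixed profile and to reduce \eqref{eOgau:u} to a radial decay estimate of Strauss type. First I would fix a radial $\Phi \in C^\infty(\R^N)\cap W^{4,\infty}(\R^N)$ with $\Phi(x)=|x|^2$ for $|x|\le 1$, bounded, positive away from the origin, and, crucially, satisfying $\Phi''\le 2$ and $0\le 2N-\Delta\Phi\le C_0(2-\Phi'')$ everywhere. The last domination is feasible because in radial coordinates $2N-\Delta\Phi=(2-\Phi'')+(N-1)(2-\Phi'/r)$, where near $|x|=1$ the second summand vanishes to higher order in $(2-\Phi'')$, while for large $|x|$ both weights are bounded below. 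Then I set $\GVar_\varepsilon(x)=\varepsilon^{-2}\Phi(\varepsilon x)$. A scaling computation gives $\Delta\GVar_\varepsilon(x)=(\Delta\Phi)(\varepsilon x)$, $\GVar_\varepsilon''(x)=\Phi''(\varepsilon x)$ and $\Delta^2\GVar_\varepsilon(x)=\varepsilon^2(\Delta^2\Phi)(\varepsilon x)$, which yields \eqref{eOgau:1b} and \eqref{eOgau:1zb} at once and shows that the three weights $2-\GVar_\varepsilon''$, $2N-\Delta\GVar_\varepsilon$ and $\Delta^2\GVar_\varepsilon$ all vanish on $\{|x|\le 1/\varepsilon\}$, so that the entire left-hand side of \eqref{eOgau:u} is concentrated on $\{|x|\ge 1/\varepsilon\}$.

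The term $-\frac12\int|u|^2\Delta^2\GVar_\varepsilon$ is immediately bounded by $\frac12\|\Delta^2\GVar_\varepsilon\|_{L^\infty}\|u\|_{L^2}^2\le \Const\varepsilon^2$, accounting for the additive $\Const\varepsilon^2$ in $\kappa$. The negative term $-2\int(2-\GVar_\varepsilon'')|u_r|^2\le 0$ I would keep in reserve to absorb the gradient produced below, so that everything hinges on the nonlinear term $\Avoir\int(2N-\Delta\GVar_\varepsilon)|u|^{\alpha+2}$. The key step is a radial decay estimate: for radial $u$ and $r\ge R:=1/\varepsilon$, integrating $-\tfrac{d}{ds}(s^{N-1}|u|^2)$ from $r$ to $\infty$ gives the pointwise bound $|u(r)|^2\le Cr^{-(N-1)}\bigl(R^{-1}\|u\|_{L^2}^2+\|u\|_{L^2}(\int_{|y|\ge r}|u_r|^2)^{1/2}\bigr)$, where $N\ge2$ makes the exponent negative. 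Raising to the power $\alpha/2$, splitting the sum, and multiplying by $\int_{|x|\ge R}|u|^2\le\Granda^2$, I obtain
\begin{equation*}
\int_{|x|\ge R}|u|^{\alpha+2}\le \Const\varepsilon^{\frac{N\alpha}{2}}\Granda^{\alpha+2}+\Const\varepsilon^{\frac{(N-1)\alpha}{2}}\Granda^{\frac\alpha2+2}\Bigl(\int_{|x|\ge R}|u_r|^2\Bigr)^{\frac\alpha4}.
\end{equation*}
The first summand, times $\Avoir$, is exactly the $\Const\Avoir\varepsilon^{N\alpha/2}$ term of $\kappa$.

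For the second summand I invoke the weight domination $2N-\Delta\GVar_\varepsilon\le C_0(2-\GVar_\varepsilon'')$, which lets the gradient be measured against the reserved negative term $2\int(2-\GVar_\varepsilon'')|u_r|^2$. When $0<\alpha<4$, Young's inequality with exponents $\frac4\alpha$ and $\frac4{4-\alpha}$ splits $\Avoir\varepsilon^{(N-1)\alpha/2}(\int|u_r|^2)^{\alpha/4}$ into a piece absorbed by the negative term plus a remainder comparable to $\Avoir(\Avoir\varepsilon^{2(N-1)})^{\alpha/(4-\alpha)}$, which is the second term of $\kappa$; when $\alpha=4$ the power of $\int|u_r|^2$ is exactly $1$, no Young step is available, and one absorbs directly, which is possible precisely under the hypothesis $\Const\Avoir\varepsilon^{2(N-1)}<1$ and leaves no remainder, explaining the simpler form of $\kappa$ for $\alpha=4$.

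The main obstacle is the interface between the two weights near $|x|=1/\varepsilon$: the coefficient $2-\GVar_\varepsilon''$ of the reserved negative term vanishes on the sphere $\{|x|=1/\varepsilon\}$, so the gradient generated by Young's inequality (or by the direct bound when $\alpha=4$) cannot be absorbed by a crude $L^2$ comparison. It is the careful design of $\Phi$ making $2N-\Delta\Phi$ pointwise dominated by $2-\Phi''$, with the matching higher-order vanishing at $|x|=1$, that renders the absorption legitimate; verifying this domination, together with exhibiting a radial profile that simultaneously satisfies all the sign and regularity constraints ($\Phi''\le 2$, $2N-\Delta\Phi\ge0$, positivity, boundedness, $W^{4,\infty}$ regularity), is the technically delicate part. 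The rescaling and the bookkeeping of the powers of $\varepsilon$ and $\Avoir$ are then routine.
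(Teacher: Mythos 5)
Your construction of $\GVar_\varepsilon$ by scaling a fixed radial profile, the bound on the bilaplacian term, and the idea of a Strauss-type radial decay estimate all match the paper's proof in outline, but there is a genuine gap at the absorption step, and you have located it yourself without actually resolving it. Your decay estimate is \emph{unweighted}: it bounds $|u(r)|$ for $r\ge R=1/\varepsilon$ in terms of $\|u\|_{L^2}$ and the unweighted exterior gradient $\bigl(\int_{|x|\ge R}|u_r|^2\bigr)^{1/2}$, so after Young's inequality you must absorb a term $\delta\int_{|x|\ge R}|u_r|^2$ into $-2\int(2-\GVar_\varepsilon'')|u_r|^2$. This is impossible no matter how cleverly $\Phi$ is designed: since $\Phi(x)=|x|^2$ for $|x|\le1$ and $\Phi$ is smooth, the weight $2-\GVar_\varepsilon''$ vanishes continuously at $|x|=R$, so for any fixed $\delta>0$ there is an annulus $R\le|x|\le(1+\eta)R$ on which $2-\GVar_\varepsilon''\le\delta/4$; a radial $u$ with $\|u\|_{L^2}\le\Granda$ whose gradient oscillates rapidly in that annulus makes $\delta\int|u_r|^2-2\int(2-\GVar_\varepsilon'')|u_r|^2\ge\frac{\delta}{2}\int|u_r|^2$ arbitrarily large. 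The pointwise domination $2N-\Delta\GVar_\varepsilon\le C_0(2-\GVar_\varepsilon'')$ that you invoke is true (the paper's profile gives it with $C_0=N$) and is indeed needed, but it cannot repair this loss: once the weight has been dropped in the sup estimate, the information that the nonlinear term is small precisely where the gradient weight is small is gone, and no choice of $\Phi$ restores it.

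The fix, which is the paper's actual mechanism, is to carry the vanishing weight through the decay estimate itself. Writing $\gamma_\varepsilon^2=2-\GVar_\varepsilon''$ (in the paper $\gamma_\varepsilon$ is constructed explicitly and is supported in $[1/\varepsilon,\infty)$), one proves the \emph{weighted} sup bound
\begin{equation*}
\|\gamma_\varepsilon^{1/2}u\|_{L^\infty}^2\le \varepsilon^N\|\gamma'\|_{L^\infty}\|u\|_{L^2}^2+2\varepsilon^{N-1}\|u\|_{L^2}\|\gamma_\varepsilon u_r\|_{L^2}
\end{equation*}
by integrating $\frac{d}{ds}\bigl[\gamma_\varepsilon|u|^2\bigr]$ (this is~\eqref{fCLu}), and then uses the pointwise factorization $\gamma_\varepsilon^2|u|^{\alpha+2}=\gamma_\varepsilon^{(4-\alpha)/2}\,\bigl[\gamma_\varepsilon^{1/2}|u|\bigr]^\alpha\,|u|^2$ --- note this is a second place where $\alpha\le4$ enters, beyond your Young exponents --- so that every gradient produced along the way is the weighted one $\|\gamma_\varepsilon u_r\|_{L^2}^2$, which the reserved negative term $-2\int\gamma_\varepsilon^2|u_r|^2$ cancels exactly, with no interface mismatch. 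With that replacement, your bookkeeping of the powers of $\varepsilon$ and $\Avoir$, the Young step for $\alpha<4$, and the treatment of $\alpha=4$ (the paper passes to the limit $\alpha\uparrow4$; your direct absorption under $\Const\Avoir\varepsilon^{2(N-1)}<1$ also works once the gradients are weighted) go through as you describe.
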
 

\begin{proof} 
We follow the method of~\cite{OgawaTu}, and we construct a family $(\GVar _ \varepsilon ) _{ \varepsilon >0 }$ such that, given $  \Granda $, the estimate~\eqref{eOgau:u} holds with $\GVar = \GVar _\varepsilon $ provided $\varepsilon >0$ is sufficiently small.
Fix  a function $h\in C^\infty  ([0, \infty ))$ such that
\begin{equation*} 
h\ge 0,\quad \Supp h\subset [1,2 ], \quad \int _0^\infty h(s)\,ds =1
\end{equation*} 
 and let
\begin{equation*} 
\zeta (t)= t- \int _0^t (t-s) h(s)\,ds = t- \int _0^t \int _0^s h(\sigma )\,d\sigma ds
\end{equation*} 
for $t\ge 0$. It follows that $\zeta \in C^\infty ([0,\infty )) \cap W^{4,\infty }((0,\infty ))$, 
$\zeta '\ge 0$, $ \zeta ''\le 0$, 
$\zeta (t)= t$ for $t\le 1$, and $\zeta (t)=M$ for $t\ge 2$ with $M= \int _0^2 sh(s)\,ds$.
Set
\begin{equation*} 
\Phi (x)= \zeta (  |x|^2) .
\end{equation*} 
It follows in particular that $\Phi \in C^\infty (\R^N ) \cap W^{4, \infty } (\R^N ) $.
Given any $\varepsilon >0$, set
\begin{equation*} 
\GVar _\varepsilon  (x)= \varepsilon ^{-2 } \Phi (\varepsilon x),
\end{equation*} 
so that
\begin{equation} \label{fYTMuz} 
  \| D^\beta  \GVar _\varepsilon  \| _{ L^\infty  }= \varepsilon ^{ |\beta |-2}  \| D^\beta  \Phi \| _{ L^\infty  }
\end{equation} 
where $\beta $ is any multi-index such that $0\le  |\beta |\le 4$.
Next, set
\begin{equation} \label{fLMmd} 
\xi (t)= \sqrt{2( 1-\zeta '(t)) -4t\zeta ''(t)}= \sqrt{2\int _0^t h(s)\,ds +4 th(t)}.
\end{equation} 
It is not difficult to check that $\xi \in C^1([0,\infty )) \cap W^{1, \infty }(0,\infty )$. 
Let
\begin{equation*} 
\gamma (r)= \xi (r^2)
\end{equation*} 
and, given $\varepsilon >0$, let
\begin{equation*} 
\gamma _\varepsilon (r)= \gamma (\varepsilon r) .
\end{equation*} 
It easily follows that $\gamma _\varepsilon $ is supported in $[\varepsilon ^{-1}, \infty )$, so that 
\begin{align} 
\| r^{-(N-1)} \gamma _\varepsilon ' \| _{ L^\infty  } \le \varepsilon ^{N-1} \|  \gamma _\varepsilon ' \| _{ L^\infty  }=  \varepsilon ^N  \|\gamma '\| _{ L^\infty  } \label{fLMmc} \\
  \| r^{-(N-1)} \gamma _\varepsilon u_r\| _{ L^2 }  \le \varepsilon ^{N-1} 
    \|  \gamma _\varepsilon u_r\| _{ L^2 } . \label{fLMms} 
\end{align} 
Set
\begin{equation} \label{fLMmuz} 
\begin{split} 
I_\varepsilon (u) & =
 -2  \int _{ \R^N  } (2 - \GVar _\varepsilon '' ) |u_r|^2 
 + \Avoir \int _{ \R^N  }  (2N-  \Delta  \GVar_\varepsilon )  |u|^{\alpha +2} \\ &
  -\frac {1}{2} \int _{ \R^N  }  |u|^2 \Delta^2  \GVar _\varepsilon . 
\end{split} 
\end{equation} 
Elementary but long calculations using in particular~\eqref{fLMmd} show that
\begin{gather} 
2- \GVar _\varepsilon ''(x)= \gamma _\varepsilon ( |x|)^2 \label{fDefHeps} \\
2N- \Delta \GVar _\varepsilon (x) 
= N  [ \gamma _\varepsilon ( |x|)]^2 + 4(N-1) (\varepsilon  |x|)^2 \zeta ''(\varepsilon ^2 |x|^2) \le 
N  [ \gamma _\varepsilon ( |x|)]^2.  \label{fGVlap} 
\end{gather} 
We deduce from~\eqref{fLMmuz}, \eqref{fDefHeps},  \eqref{fGVlap}, and~\eqref{fYTMuz} that
\begin{equation} \label{fLMmud} 
I_\varepsilon (u) \le -2\int _{ \R^N  } \gamma _\varepsilon ^2  |u_r|^2 + N \Avoir \int _{ \R^N  } \gamma _\varepsilon ^2  |u|^{\alpha +2} + \frac {\varepsilon ^2} {2}  \|\Delta^2 \Phi \| _{ L^\infty  }   \|u\| _{ L^2 } ^2. 
\end{equation} 
We next claim that
\begin{equation} \label{fCLu} 
\| \gamma _\varepsilon ^{\frac {1} {2}} u \| _{ L^\infty  }^2 
\le \varepsilon ^N  \|  \gamma  ' \| _{ L^\infty  }
   \|u\| _{ L^2 }^2  + 2 \varepsilon ^{N-1}    \|u\| _{ L^2 }  
    \|  \gamma _\varepsilon  u_r\| _{ L^2 } .
\end{equation} 
Indeed,
\begin{equation} \label{fLMmh} 
\begin{split} 
\gamma _\varepsilon (r)  |u(r)|^2 & = - \int _r^\infty \frac {d} {ds}[\gamma _\varepsilon (s)  |u(s)|^2] 
 \le \int _0^\infty   |\gamma _\varepsilon '|\,  |u|^2 + 2 \int _0^\infty \gamma _\varepsilon  |u|\,  |u_r| \\
& \le   \| r^{-(N-1)} \gamma _\varepsilon ' \| _{ L^\infty  }  \|u\| _{ L^2 }^2 + 2  \|u\| _{ L^2 }  \| r^{-(N-1)} \gamma _\varepsilon u_r\| _{ L^2 }.
\end{split} 
\end{equation} 
(The above calculation is valid for a smooth function $u$ and is easily justified for a general $u$ by density.)
The estimate~\eqref{fCLu}  follows from~\eqref{fLMmh}, \eqref{fLMmc}, and~\eqref{fLMms}.   

In what follows, $\Const $ denotes a constant that may depend on $ N, \gamma , \Phi$ and $ \Granda $ and change from line to line, but is independent of $0<\alpha \le 4$ and $\varepsilon >0$.
We  assume $  \|u\| _{ L^2 } \le  \Granda $, and we observe that 
\begin{equation} \label{fLMmp} 
\begin{split} 
\int _{ \R^N  } \gamma _\varepsilon ^2 |u|^{\alpha +2} & = \int _{ \R^N  }
\gamma _\varepsilon ^{\frac {4- \alpha } {2}}
[ \gamma _\varepsilon^{\frac {1} {2}}  |u|  ]^\alpha 
 |u|^2\le   \| \gamma  \| _{ L^\infty  }^{\frac {4- \alpha } {2}}  \| \gamma _\varepsilon ^{\frac {1} {2}} u \| _{ L^\infty  }^\alpha   \Granda ^2 \\ & \le \Const  \| \gamma _\varepsilon ^{\frac {1} {2}} u \| _{ L^\infty  }^\alpha .
\end{split} 
\end{equation} 
Applying~\eqref{fCLu} and the inequality $(x+y)^{\frac {\alpha } {2}}\le \Const( x^{\frac {\alpha } {2}} + y^{\frac {\alpha } {2}})$, we deduce from~\eqref{fLMmp} that 
\begin{equation} \label{fLMmuu} 
   \int _{ \R^N  }  \gamma _\varepsilon ^2 |u|^{\alpha +2} \le  
\Const \varepsilon ^{\frac {N\alpha } {2}}     
 + \Const \varepsilon ^{\frac {(N-1) \alpha } {2}}   
    \|  \gamma _\varepsilon  u_r\| _{ L^2 } ^{\frac {\alpha } {2}} .
\end{equation} 
We first consider the case $\alpha <4$. Applying the inequality $xy\le \frac { x^p} {p\delta ^p} + \frac {\delta ^{p'} y^{p'}} { p'}$ with $\delta >0$ and $p= \frac {4} {4-\alpha }$, $p'= \frac {4} {\alpha }$, we see that
\begin{equation*} 
\varepsilon ^{\frac {(N-1) \alpha } {2}}   
    \|  \gamma _\varepsilon  u_r\| _{ L^2 } ^{\frac {\alpha } {2}} \le \Const  \delta ^{- \frac {4} {4-\alpha }} \varepsilon ^{\frac {2(N-1)\alpha } {4-\alpha }} +\Const \delta ^{\frac {4} {\alpha }}    \|  \gamma _\varepsilon  u_r\| _{ L^2 } ^2
\end{equation*} 
so that~\eqref{fLMmuu} yields
\begin{equation} \label{fLMmuu1} 
   \int _{ \R^N  }  \gamma _\varepsilon ^2 |u|^{\alpha +2} \le \Const  \delta ^{\frac {4} {\alpha }}    \|  \gamma _\varepsilon  u_r\| _{ L^2 } ^2 +  \Const   \Bigl(  \varepsilon ^{\frac {N\alpha } {2}}   
 + \delta ^{- \frac {4} {4-\alpha }} \varepsilon ^{\frac {2(N-1)\alpha } {4-\alpha }}   \Bigr) .
\end{equation} 
Estimates~\eqref{fLMmud} and~\eqref{fLMmuu1} now yield
\begin{equation} \label{fLMmut} 
I_\varepsilon (u) \le -  \Bigl( 2-  \Const 
  \Avoir  \delta ^{\frac {4} {\alpha }}      \Bigr)  \|  \gamma _\varepsilon  u_r\| _{ L^2 } ^2   + \Const
 \Avoir    \Bigl(  \varepsilon ^{\frac {N\alpha } {2}}  
 +  [ \delta ^{-4} \varepsilon ^ {2(N-1)\alpha }  ] ^{\frac {1} {4-\alpha }} \Bigr) +\Const \varepsilon ^2  . 
\end{equation} 
We choose $\delta >0$ so that the first term in the right-hand side of~\eqref{fLMmut} vanishes, i.e.
$ \Const  \Avoir  \delta ^{\frac {4} {\alpha }}    =  2$. For this choice of $\delta $, 
 it  follows from~\eqref{fLMmut} that if $ \|u\| _{ L^2 }\le A$, then
$I_\varepsilon (u) \le  \kappa (\mu ,\varepsilon )$, where $\kappa $ is defined by~\eqref{eOgau:u1}.  
This proves inequality~\eqref{eOgau:u}  for $\alpha <4$. The case $\alpha =4$ follows by letting $\alpha \uparrow 4$ and observing that $ [ \Const    \Avoir  \varepsilon ^ {2(N-1) }  ] ^{\frac {\alpha } {4-\alpha }} \to 0$ as $\alpha \uparrow 4$ when $\Const    \Avoir  \varepsilon ^ {2(N-1) }  <1$.
\end{proof}

\end{document}